\documentclass[12pt]{amsart}

\usepackage[T1]{fontenc}
\usepackage{amsmath,amssymb,amsthm,mathtools}
\usepackage{amsfonts}
\usepackage{mathrsfs}
\usepackage[top=1.3in,bottom=1.2in,left=1in,right=1in]{geometry}
\usepackage[activate=false]{microtype}
\usepackage[english]{babel}
\usepackage[colorlinks=true,linkcolor=blue,citecolor=blue,urlcolor=blue,backref=page]{hyperref}
\renewcommand*{\backref}[1]{}
\renewcommand*{\backrefalt}[4]{\ifcase #1\or $\uparrow$ #2\else $\uparrow$ #2\fi}
\usepackage{enumitem}
\usepackage[nameinlink,capitalize]{cleveref}

\usepackage{longtable,booktabs,array}

\numberwithin{equation}{section}

\newtheorem{theorem}{Theorem}[section]
\newtheorem{proposition}[theorem]{Proposition}
\newtheorem{corollary}[theorem]{Corollary}
\newtheorem{lemma}[theorem]{Lemma}
\newtheorem{definition}[theorem]{Definition}
\newtheorem{remark}{Remark}[section]

\newcommand{\RR}{\mathbb R}
\newcommand{\CC}{\mathbb C}
\newcommand{\HH}{\mathbb H}

\newcommand{\CP}{\mathbb{CP}}
\newcommand{\HP}{\mathbb{HP}}

\newcommand{\Id}{\operatorname{Id}}
\newcommand{\tr}{\operatorname{tr}}

\newcommand{\Span}{\operatorname{span}}

\newcommand{\calQ}{\mathcal Q}
\newcommand{\inner}[2]{\left\langle #1,#2\right\rangle}

\newcommand{\Sym}{\operatorname{Sym}}
\newcommand{\Herm}{\operatorname{Herm}}
\newcommand{\Spin}{\mathrm{Spin}}
\newcommand{\Vol}{\operatorname{Vol}}

\def\End{\mathrm{End}}

\newcommand{\sph}{\mathrm S}

\setlist[enumerate]{itemsep=0.3em}
\begin{document}

\title[Normal curvature and Veronese rigidity]
{Maximal Normal Curvature and Veronese Rigidity}

\author{Tsz-Kiu Aaron Chow}
\address{Department of Mathematics, Hong Kong University of Science and Technology, Hong Kong S.A.R., China}
\email{\href{chowtka@ust.hk}{chowtka@ust.hk}}
\thanks{T.-K. A. C. is supported  by the Croucher Foundation Start-up Grant and the HKUST New Faculty Start-up Grant}

\author{Jingbo Wan}
\address{Laboratoire Jacques-Louis Lions de Sorbonne Universit\'e, 4 place Jussieu, Paris 75005, France}
\email{\href{jingbo.wan@sorbonne-universite.fr}{jingbo.wan@sorbonne-universite.fr}}
\thanks{J. W. is supported by ERC-2023 AdG 101141855 BLaHST}

\begin{abstract}
We prove a sharp Veronese rigidity theorem for closed immersed submanifolds of the
Euclidean unit ball under intrinsic harmonic-structure assumptions.  For an
isometric immersion $F:(\Sigma,g)\looparrowright\overline B(1)$, define the maximal normal curvature by
\[
        \kappa(F):=
        \sup_{x\in\Sigma}
        \sup_{\substack{v\in T_x\Sigma\\ |v|_g=1}}
        |A_x(v,v)|.
\]
If $\Sigma^{2n}$ is almost Hermitian with harmonic fundamental two-form, or
$\Sigma^{4n}$ is almost quaternion-Hermitian with harmonic fundamental
four-form, $n\ge2$, then
\[
        \kappa(F)\ge \sqrt{\frac{2n}{n+1}} .
\]
In the equality case the harmonic form is parallel and the immersion is,
up to a totally geodesic inclusion, the standard complex or quaternionic
Veronese embedding of projective spaces.  The key input is a Bochner--Gauss mechanism that turns the
Bochner curvature term of the harmonic form into a sharp algebraic estimate for
the shape operators.
\end{abstract}
\maketitle
\tableofcontents
%============================================================
\section{Introduction}
\label{sec:introduction}
%============================================================

A theme initiated by Gromov is to study isometric immersions into Euclidean
balls under quantitative control of their extrinsic curvature
\cite{GromovControlledCurvatures}.  Let $(\Sigma^m,g)$ be closed and connected,
and let
\[
        F:(\Sigma^m,g)\looparrowright
        \overline B^{m+\ell}(1)\subset\RR^{m+\ell}
\]
be an isometric immersion.  We write $D$ for the Euclidean connection and
$\nabla$ for the Levi-Civita connection of $(\Sigma,g)$.  For tangent vector
fields $X,Y$ on $\Sigma$, viewed as vector fields along the immersion, the
second fundamental form is
\[
        A(X,Y)=D_XY-(D_XY)^T=(D_XY)^\perp .
\]
The position vector decomposes as $F=F^T+F^\perp$.  Following Gromov \cite{GromovControlledCurvatures}, the
maximal normal curvature of the immersion is
\begin{align}
        \kappa(F)
        :=
        \sup_{x\in\Sigma}\sup_{\substack{v\in T_x\Sigma\\ |v|=1}}
        |A_x(v,v)| .
        \label{eq:intro-kappa-definition}
\end{align}
Equivalently, $\kappa(F)$ is the maximal Euclidean curvature of intrinsic
unit-speed geodesics in $\Sigma$.

Petrunin's Veronese theorem shows that this quantity detects very rigid
borderline geometry: If a closed submanifold of an $r$-ball has $\kappa(F)\le \frac{2}{\sqrt3\,r}$, then the strict inequality forces the source to be homeomorphic to a sphere,
while any non-spherical borderline case is, up to rescaling and congruence, one
of the Veronese embeddings of projective planes $\mathbb{FP}^2$ \cite{PetruninVeronese}.  A closely related
strand of the same program fixes the smooth source manifold $X$ and asks for
the optimal value of $\kappa(F)$ among immersions of $X$ into Euclidean balls
of sufficiently large dimension.  In this fixed-topology direction, some sharp
values are known, for instance, for $X=T^n$ \cite{PetruninGromovTori} and for
$X=S^n\times S^1$ \cite{ChodoshLiSmallNormalCurvature}.
These results identify optimal lower bounds for $\kappa(F)$; however, the
corresponding equality theory is subtler and does not, in general, give a classification of all minimizers.

The present paper takes a complementary viewpoint to both of these directions.  We do not prescribe the
topology of $\Sigma$.  Instead, we prescribe an intrinsic special geometric
structure: an almost Hermitian structure with harmonic fundamental two-form, or
an almost quaternion-Hermitian structure with harmonic fundamental four-form.
The resulting variational problem is to minimize the maximal normal curvature
among all closed isometric immersions into the unit ball carrying such a
harmonic structure form.  Our result identifies the sharp minimum with the
Veronese normal-curvature scale.  Moreover, equality is fully rigid: a minimizer
automatically has the projective topology, the intrinsic projective metric, and
the standard Veronese embedding of $\mathbb{CP}^n$ and $\mathbb{HP}^n$ for all $n\geq 2$.

\begin{definition}\
\begin{enumerate}[label=(\roman*)]
\item An almost Hermitian manifold is a triple $(\Sigma^{2n},g,J)$, where
$J^2=-\Id$ and $g(JX,JY)=g(X,Y)$.  Its fundamental two-form is
\[
        \omega(X,Y)=\langle JX,Y\rangle .
\]
If $\nabla\omega=0$, then $(\Sigma,g,J)$ is K\"ahler.

\item An almost quaternion-Hermitian manifold is a triple $(\Sigma^{4n},g,\calQ)$,
where $\calQ\subset\End(T\Sigma)$ is a rank-three subbundle locally spanned by
endomorphisms $I,J,K$ satisfying
\[
I^2=J^2=K^2=-\Id,
\qquad
IJK=-\Id,
\]
and such that $g(\phi X,\phi Y)=g(X,Y)$ for every local section $\phi\in\calQ$ with
$\phi^2=-\Id$.  For a local admissible frame $I,J,K$, set
\[
        \omega_I(X,Y)=\langle IX,Y\rangle,
        \qquad
        \omega_J(X,Y)=\langle JX,Y\rangle,
        \qquad
        \omega_K(X,Y)=\langle KX,Y\rangle .
\]
The fundamental four-form is
\[
        \Theta
        =
        \frac16(\omega_I^2+\omega_J^2+\omega_K^2).
\]
It is independent of the choice of local admissible frame.  If
$\nabla\Theta=0$, then $(\Sigma,g,\calQ)$ is quaternionic-K\"ahler.
\end{enumerate}
\end{definition}

\begin{theorem}
\label{thm:main-ball-FPn}
Let $(\Sigma^m,g)$ be closed and connected, and let $F:(\Sigma^m,g)\looparrowright
        \overline B^{m+\ell}(1)\subset\RR^{m+\ell}$ be an isometric immersion. \footnote{For a fixed $\ell$, such an isometric immersion may not exist. By the smooth Nash embedding theorem \cite{Nash1956}, after a constant rescaling of $g$, such an isometric immersion into $\overline B^{m+\ell}(1)$ exists for sufficiently large $\ell$.}
\begin{enumerate}[label=(\roman*)]
\item If $(\Sigma^{2n},g,J)$ is almost Hermitian, its fundamental two-form
$\omega$ is harmonic, and $n\ge2$, then
\[
        \kappa(F)\ge \sqrt{\frac{2n}{n+1}} .
\]
Equality holds if and only if $(\Sigma,g,J)$ is K\"ahler,
$F(\Sigma)\subset\partial B(1)$, and $F$ is globally congruent to the
Veronese embedding $\CP^n\subset S^{(n+1)^2-2}(1)$, up to a totally geodesic inclusion.

\item If $(\Sigma^{4n},g,\calQ)$ is almost quaternion-Hermitian, its
fundamental four-form $\Theta$ is harmonic, and $n\ge2$, then
\[
        \kappa(F)\ge \sqrt{\frac{2n}{n+1}} .
\]
Equality holds if and only if $(\Sigma,g,\calQ)$ is quaternionic-K\"ahler,
$F(\Sigma)\subset\partial B(1)$, and $F$ is globally congruent to the
Veronese embedding $\HP^n\subset S^{(n+1)(2n+1)-2}(1)$, up to a totally geodesic inclusion.
\end{enumerate}
\end{theorem}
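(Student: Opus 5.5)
The plan is a Bochner argument with a Gauss-equation estimate of the curvature term, closed by an integral identity coming from the unit ball. Write $\kappa=\kappa(F)$ and let $\phi$ be the harmonic form: $\phi=\omega$ of degree $p=2$ in case (i), or $\phi=\Theta$ of degree $p=4$ in case (ii).

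\emph{Step 1: Bochner identity.} Integrating the Weitzenb\"ock formula for the harmonic form $\phi$ over the closed manifold gives
\[
0=\int_\Sigma |\nabla\phi|^2+\int_\Sigma \langle \mathcal R(\phi),\phi\rangle ,
\]
where $\mathcal R$ is the Bochner curvature endomorphism on $p$-forms.

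\emph{Step 2: the curvature term via Gauss.} I use the Gauss equation $R(X,Y,Z,W)=\langle A(X,W),A(Y,Z)\rangle-\langle A(X,Z),A(Y,W)\rangle$ to write $\langle\mathcal R(\phi),\phi\rangle$ as a quadratic expression in the shape operators $A^\nu$. Here $\nu$ runs over an orthonormal normal frame and each $A^\nu$ is a symmetric endomorphism. Since $\omega$ and $\Theta$ are pointwise the standard $U(n)$- and $Sp(n)Sp(1)$-invariant forms, I decompose each $A^\nu$ into its $J$-commuting and $J$-anticommuting parts (respectively, its parts relative to $\calQ$). I expect $\langle\mathcal R(\phi),\phi\rangle$ to be bounded below by
\[
c_n\bigl(|\text{commuting part}|^2 - \text{(trace terms)}\bigr) - |\text{anticommuting part}|^2 .
\]

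\emph{Step 3: pointwise algebraic inequality.} The goal is an inequality of the form
\[
\langle \mathcal R(\phi),\phi\rangle \ \ge\ C\Bigl(\tfrac{2n}{n+1}\,|H|^2\cdots\Bigr)-\Bigl(\kappa^2-\tfrac{2n}{n+1}\Bigr)\cdot(\text{positive}),
\]
with the ideal shape of the conclusion being $\langle\mathcal R(\phi),\phi\rangle\ge c\,(|H|^2-\text{const}\cdot\kappa^2)$-type or $\ge c\,(\tfrac{2n}{n+1}-\kappa^2)$-type. To prove it I use that $|A(v,v)|\le\kappa$ for all unit $v$. Polarizing and averaging over $J$-complex lines (respectively quaternionic lines), i.e. averaging $|A(v,v)|^2$ over unit $v$ in each complex or quaternionic line, bounds the holomorphic (quaternionic) sectional curvature combinations that appear in $\mathcal R$ from above by $\kappa^2$. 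This must be combined with a global input from the ball: integrate $\Delta\tfrac12|F|^2=m+\langle H,F\rangle$ to get $\int_\Sigma m\le\int_\Sigma |H|\,|F|\le\int_\Sigma|H|$, with $|H|\le m\kappa$ by averaging. The intended combination yields $\int_\Sigma\langle\mathcal R\phi,\phi\rangle\ge c\int_\Sigma(\tfrac{2n}{n+1}-\kappa^2)$, and together with Step 1 this forces $\kappa^2\ge \tfrac{2n}{n+1}$. The sharp constant $\tfrac{2n}{n+1}$ should arise from the standard Veronese identity: for a map with $|A(v,v)|$ constant, one has $|A(v,v)|^2=\tfrac{2n}{n+1}$ times the curvature normalization when $|H|=\sqrt m$.

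\emph{Step 4: equality.} If equality holds, every inequality used is an equality. In particular $\nabla\phi=0$, so $\Sigma$ is K\"ahler (respectively quaternionic-K\"ahler); $|F|\equiv1$ and $H$ is parallel to $-F$; and $|A(v,v)|\equiv\kappa$, so the immersion is isotropic in the sense of O'Neill. The equality case of the algebraic step should also force constant holomorphic (quaternionic) sectional curvature. The Fubini--Study metric together with isotropy and minimality in the sphere then identifies $F$ with the Veronese map, by rigidity of minimal isotropic immersions of projective spaces (Sakamoto, or do Carmo--Wallach type standard immersions), after which one passes to the global statement on the simply connected quotient. Conversely, the Veronese embeddings attain equality.

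I expect the main obstacle to be Step 3: producing the sharp pointwise algebraic inequality for the Bochner term in terms of $\kappa$, especially in the quaternionic case, where one must handle the $Sp(1)$ part and the trace terms of the $A^\nu$ simultaneously. I would first try it on the model algebraic curvature of the Veronese second fundamental form and then perturb.
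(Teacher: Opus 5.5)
Your skeleton is the right one: the Bochner identity for $\phi$, the Gauss equation, an input from $|F|\le1$, and isotropy at equality. But the step that carries the theorem is missing. Step~3 is only a hoped-for inequality, and the structure you predict for the Bochner term in Step~2 has the signs reversed, so your sketch cannot produce it.

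Write $A_\alpha=H_\alpha+\tau_\alpha\Id+K_\alpha$, where $H_\alpha$ is trace-free $J$-Hermitian (not your mean curvature), $\tau_\alpha=\tr A_\alpha/m$, and $K_\alpha J=-JK_\alpha$. Via Gauss, the curvature term is $\sum_\alpha\bigl((\tr A_\alpha)A_\alpha^{[2]}-(A_\alpha^{[2]})^2\bigr)$, where $A^{[2]}_\alpha$ is the derivation extension to $2$-forms. Diagonalizing in a unitary frame gives the exact identity
\[
\frac{\langle\mathcal R\omega,\omega\rangle}{2(2n-2)|\omega|^2}
=\sum_\alpha\tau_\alpha^2-\frac{1}{n(n-1)}\sum_\alpha\tr_\CC(H_\alpha^2),
\]
in which $K_\alpha$ drops out. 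In the quaternionic case the coefficient of $\tr_\HH(H_\alpha^2)$ is $\frac{n+1}{n(n-1)(2n+1)}$, and the three anti-commuting components add $-\frac{1}{12n(2n+1)}\sum_\phi|(A_\alpha)_\phi|^2$. So the trace-free commuting part is the harmful term and the trace part is the favorable one. Your lower bound of the form $c_n|\text{commuting part}|^2-(\text{trace terms})$ is false: take $\tau_\alpha=0$, $K_\alpha=0$, $H_\alpha\neq0$. Accordingly, what the ball must supply is the lower bound $\int_\Sigma|\tr A|^2/m^2\ge\Vol(\Sigma)$. Your inequality $\int_\Sigma m\le\int_\Sigma|\tr A|$ gives this after Cauchy--Schwarz; the upper bound $|\tr A|\le m\kappa$ plays no role.

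What has to be bounded is $\sum_\alpha\tr_\CC(H_\alpha^2)$, and the mechanism is the fourth-moment average over the full unit sphere, not averages over single complex or quaternionic lines. Since $\int_{S^{m-1}}\langle Rv,v\rangle\langle Tv,v\rangle\,d\sigma=\frac{\tr R\,\tr T+2\tr(RT)}{m(m+2)}$, one gets
\[
\sum_\alpha\tr_\CC(H_\alpha^2)+n(n+1)\sum_\alpha\tau_\alpha^2\le n(n+1)\sup_{|v|=1}|A(v,v)|^2 .
\]
In the quaternionic case the analogue has $n(2n+1)$ and an extra $\frac14\sum_\phi|(A_\alpha)_\phi|^2$ on the left, which absorbs the anti-commuting terms with room to spare. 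Substituting into the identity gives the pointwise bound
\[
-\frac{\langle\mathcal R\phi,\phi\rangle}{p(m-p)|\phi|^2}\le\frac{n+1}{n-1}\kappa^2-\frac{2n}{n-1}\frac{|\tr A|^2}{m^2}.
\]
Only then do the Bochner identity and the trace lower bound yield $\kappa^2\ge 2n/(n+1)$. Perturbing from the Veronese model cannot replace this inequality, which must hold for arbitrary shape operators.

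Your equality route is a reasonable alternative to the paper's argument (parallel second fundamental form, plus a homothety $E\to\Herm_0$ from irreducibility). The idea would be: minimality in the sphere, plus $\lambda_1=m$ for the metric of holomorphic, resp.\ quaternionic, sectional curvature $2n/(n+1)$, puts the coordinates in the first eigenspace, where the standard immersion is rigid. But it needs exactly the equality information the missing step provides. That includes vanishing of the anti-commuting parts, hence $A^S(v,Jv)=0$, $A^S(Jv,Jv)=A^S(v,v)$ and constant holomorphic (quaternionic) curvature via Gauss. It also needs Synge's theorem for simple connectivity.
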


\begin{remark}
Only the estimate uses the harmonic assumptions.  In the equality case, the
Bochner identity gives $\nabla\omega=0$, respectively $\nabla\Theta=0$, so the
rigidity part reduces to the K\"ahler and quaternionic-K\"ahler cases.

For $n\ge3$, closedness of $\Theta$ already implies $\nabla\Theta=0$ by
Swann's theorem \cite{Swann1991}.  Thus the almost quaternion-Hermitian
formulation is only a genuine weakening in real dimension $8$.
\end{remark}

\begin{remark}[The octonionic plane]
There is a natural octonionic analogue only for the Cayley projective plane $\mathbb OP^2=F_4/\Spin(9)$, not for a family $\mathbb OP^n$, $n\ge3$, because of the non-associativity of $\mathbb O$.  A $16$-manifold with a $\Spin(9)$-structure carries a canonical invariant $8$-form, and the corresponding assumption would be its harmonicity.  We expect the same sharp estimate and rigidity statement for the octonionic Veronese embedding of $\mathbb OP^2$ as a single exceptional case, but the required Bochner--Gauss algebra for the $\Spin(9)$-invariant $8$-form is more involved than in the Hermitian and
quaternionic-Hermitian cases.
\end{remark}

\begin{corollary}
\label{cor:variational-form}
For $n\ge2$, define
\[
\mathcal P_{\CC,n}
=
\left\{
(\Sigma^{2n},g,J,F,\ell)
\ \middle|\
\begin{aligned}
&\ell\ge1,\quad
(\Sigma,g,J)\ \text{closed connected almost Hermitian},\\
&\omega\ \text{harmonic},\quad
F:(\Sigma,g)\looparrowright \overline B^{2n+\ell}(1)
\ \text{is an isometric immersion}
\end{aligned}
\right\}.
\]
and
\[
\mathcal P_{\HH,n}
=
\left\{
(\Sigma^{4n},g,\calQ,F,\ell)
\ \middle|\
\begin{aligned}
&\ell\ge1,\quad
(\Sigma,g,\calQ)\ \text{closed connected almost quaternion-Hermitian},\\
&\Theta\ \text{harmonic},\quad
F:(\Sigma,g)\looparrowright \overline B^{4n+\ell}(1)
\ \text{is an isometric immersion}
\end{aligned}
\right\}.
\]
Then
\[
        \inf_{(\Sigma,g,J,F,\ell)\in\mathcal P_{\CC,n}}\kappa(F)
        =
        \sqrt{\frac{2n}{n+1}},
        \qquad
        \inf_{(\Sigma,g,\calQ,F,\ell)\in\mathcal P_{\HH,n}}\kappa(F)
        =
        \sqrt{\frac{2n}{n+1}} .
\]
In both cases, the minimizers are exactly the standard complex,
respectively quaternionic, Veronese embeddings, up to reparametrization,
ambient orthogonal congruence, and totally geodesic inclusion of the target
sphere.
\end{corollary}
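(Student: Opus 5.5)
The corollary follows directly from Theorem~\ref{thm:main-ball-FPn}; the only work is to check that the infimum is attained and to identify the minimizers.

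\emph{Lower bound.} Take any $(\Sigma,g,J,F,\ell)\in\mathcal P_{\CC,n}$. Then $(\Sigma,g,J)$ is closed, connected and almost Hermitian with harmonic $\omega$, and $F$ is an isometric immersion into $\overline B^{2n+\ell}(1)$. Part (i) of Theorem~\ref{thm:main-ball-FPn} applies and gives $\kappa(F)\ge\sqrt{2n/(n+1)}$, so
\[
\inf_{\mathcal P_{\CC,n}}\kappa(F)\ge\sqrt{2n/(n+1)}.
\]
The same argument with part (ii) gives the bound over $\mathcal P_{\HH,n}$.

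\emph{Attainment.} I exhibit explicit members of each class. For $\mathcal P_{\CC,n}$, take the Veronese embedding $\CP^n\subset S^{(n+1)^2-2}(1)\subset\overline B^{(n+1)^2-1}(1)$. Here:
\begin{itemize}
\item the metric is the induced one, namely the Fubini--Study metric normalized so that the embedding is isometric;
\item $J$ is the standard complex structure, so the structure is K\"ahler;
\item $\nabla\omega=0$, hence $\omega$ is closed and coclosed, i.e.\ harmonic;
\item $\ell=(n+1)^2-1-2n\ge1$.
\end{itemize}
The "if" direction of the equality statement in part (i) gives $\kappa(F)=\sqrt{2n/(n+1)}$.

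This value can also be checked directly. The Veronese map is a minimal immersion into the sphere, with isotropic second fundamental form of constant length. So $|A(v,v)|^2=1+|A^{S}(v,v)|^2$, where $A^S$ is the second fundamental form in the sphere, and the standard normalization gives $|A^S(v,v)|^2=(n-1)/(n+1)$. Hence $|A(v,v)|^2=2n/(n+1)$ for every unit $v$.

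The quaternionic case is identical: use $\HP^n\subset S^{(n+1)(2n+1)-2}(1)$ with its quaternionic-K\"ahler structure, for which $\nabla\Theta=0$. Combined with the lower bound, each infimum equals $\sqrt{2n/(n+1)}$ and is a minimum.

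\emph{Minimizers.} Suppose an element of $\mathcal P_{\CC,n}$ (resp.\ $\mathcal P_{\HH,n}$) has $\kappa(F)=\sqrt{2n/(n+1)}$. The "only if" direction of Theorem~\ref{thm:main-ball-FPn}(i) (resp.\ (ii)) shows that $F$ is globally congruent to the Veronese embedding up to a totally geodesic inclusion of the target sphere. "Globally congruent" means there is a diffeomorphism $\Sigma\to\CP^n$ (resp.\ $\HP^n$), which is the reparametrization, together with an ambient orthogonal map relating $F$ to the standard embedding. Since $F(\Sigma)\subset\partial B(1)$, the relevant ambient maps fix the center of the ball, so translations do not arise and only orthogonal congruence is needed.

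Conversely, any reparametrization, orthogonal congruence or totally geodesic inclusion of a Veronese embedding preserves each of the following:
\begin{itemize}
\item membership in the unit ball;
\item isometricity of the immersion;
\item harmonicity of the pulled-back structure form;
\item the value of $\kappa$.
\end{itemize}
So all such maps are minimizers.

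There is no real obstacle beyond this bookkeeping. The one point to make precise is that "up to a totally geodesic inclusion" allows embeddings into balls of larger dimension, so that $\ell$ varies across the class, and that this inclusion leaves $\kappa$ unchanged. It does, because a totally geodesic inclusion $S^{N}\subset S^{N'}$ comes from a linear inclusion $\RR^{N+1}\subset\RR^{N'+1}$, which preserves Euclidean second fundamental forms.
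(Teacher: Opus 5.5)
Your proposal is correct and matches the paper's own argument: the lower bound comes from Theorem~\ref{thm:main-ball-FPn} for each fixed $\ell$, attainment is by the Veronese models of Section~\ref{sec:models} (with totally geodesic inclusions covering larger $\ell$), and the minimizers are identified by the equality statement of the theorem. Your direct check $|A(v,v)|^2=1+|A^S(v,v)|^2=\frac{2n}{n+1}$ is exactly the computation \eqref{eq:model-Euclidean-spherical-norm}--\eqref{eq:model-FPn-spherical-sharp} that the paper uses, and it is the better justification for attainment, because the ``if'' direction of the theorem is itself established by that computation.
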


\begin{proof}
For each fixed $\ell$, Theorem~\ref{thm:main-ball-FPn} gives
\[
        \kappa(F)\ge \sqrt{\frac{2n}{n+1}} .
\]
Taking the infimum over all finite $\ell$ gives the same lower bound on
$\mathcal P_{\CC,n}$ and $\mathcal P_{\HH,n}$.

There is no loss of compactness in the codimension parameter for the value of
the infimum: the standard Veronese embeddings $\Phi_{\mathbb C}$  and $\Phi_{\mathbb H}$ \eqref{eq:model-FPn-map} for $\mathbb{F}=\mathbb{C},\mathbb{H}$  already occur in finite
codimension and realize
\[
        \kappa(\Phi_{\mathbb C})=\sqrt{\frac{2n}{n+1}},\qquad   \kappa(\Phi_{\mathbb H})=\sqrt{\frac{2n}{n+1}},
\]
see already Section \ref{sec:models}. Thus the lower bound is sharp without passing to a sequence with
$\ell\to\infty$.  Composing the standard models with totally geodesic
inclusions of spheres gives the same value in larger codimensions.  Hence both
infima have the stated value.  The classes $\mathcal P_{\CC,n}$ and
$\mathcal P_{\HH,n}$ are non-empty; more generally, by the smooth Nash
embedding theorem \cite{Nash1956}, after a constant rescaling of the metric,
any closed admissible source admits an isometric immersion into a Euclidean
ball of sufficiently large dimension.

If equality holds for some admissible $F$, then $F$ has some fixed finite
codimension.  The equality statement of Theorem~\ref{thm:main-ball-FPn}
therefore applies and gives the stated classification of minimizers.
\end{proof}

\subsection{The Bochner--Gauss mechanism}

The proof is based on an extrinsic use of the Bochner formula.  In its
classical form, the Bochner method starts from an intrinsic curvature
positivity assumption and concludes that harmonic forms, tensors, or spinors
vanish or become parallel.  Here the direction is different: the geometric
structure on $\Sigma$ supplies a distinguished harmonic form, and this form
is used to test the curvature produced by the immersion.  More precisely, the
Weitzenb\"ock curvature term is split by the Euclidean Gauss equation into the
part determined by the second fundamental form.  This extrinsic
Bochner--Gauss viewpoint was used in our previous work on focal-radius
rigidity \cite{ChowWanFocalHypersurfaces}; related formulae for the Bochner
term of an isometric immersion also appear in Savo's work
\cite{Savo2014}.

More precisely, applying the Bochner formula to the complex or quaternionic
fundamental form gives an integral identity involving its Bochner curvature
term.  The Euclidean Gauss equation then rewrites this term entirely in terms
of the shape operators of the immersion.  The problem is thereby reduced to a
sharp pointwise algebraic inequality for the action of symmetric endomorphisms
on the relevant structure form.  This converts the harmonicity assumption into
a lower bound for the maximal normal curvature.

The same mechanism also controls the equality case.  Equality in the
Bochner--Gauss inequality forces equality in the underlying algebraic estimate,
which imposes a rigid Hermitian or quaternionic-Hermitian structure on the
shape operators.  Combined with the extrinsic Minkowski identity, this yields
spherical minimality and ultimately identifies the immersion with the standard
complex or quaternionic Veronese model.

Let
\[
        (m,p,\Psi,\mathbb F)=(2n,2,\omega,\CC)
        \quad\text{or}\quad
        (m,p,\Psi,\mathbb F)=(4n,4,\Theta,\HH).
\]
Since $\Psi$ is harmonic, the Bochner formula and the Euclidean Gauss equation
give the integral identity \cite{ChowWanFocalHypersurfaces, Savo2014}
\begin{equation}\label{eq:intro-bochner-gauss}
        0=
        \int_\Sigma |\nabla\Psi|^2\,d\mu
        +
        \int_\Sigma
        \left\langle
        \sum_\alpha q^{(p)}_{A_\alpha}\Psi,\Psi
        \right\rangle d\mu .
\end{equation}
Here $A_\alpha$ are the scalar shape operators in an orthonormal frame of the
normal bundle, and $q^{(p)}_{A_\alpha}$ denotes the curvature endomorphism on
$p$-forms associated with the Gauss term generated by $A_\alpha$.  A priori the
second term in \eqref{eq:intro-bochner-gauss} has no sign.  The special form
$\Psi$ changes this: it selects a representation-theoretically distinguished
part of the Gauss curvature, and the relevant algebra forces the correct sign
after comparison with the maximal normal curvature.

The pointwise heart of the paper is the Bochner algebra developed in Section \ref{sec:linear-algebra}.  In the complex case a symmetric endomorphism is split into its Hermitian trace-free, scalar, and anti-Hermitian parts.  In the quaternionic case one uses the decomposition
\[
        \Sym_\RR(\HH^n)
        =
        \Herm_0(\HH^n)
        \oplus \RR\Id
        \oplus S_I\oplus S_J\oplus S_K .
\]
For both structures this gives the sharp pointwise estimate
\begin{equation}\label{eq:intro-pointwise-algebra}
        -\frac{\left\langle
        \sum_\alpha q^{(p)}_{S_\alpha}\Psi,\Psi
        \right\rangle}
        {p(m-p)|\Psi|^2}
        \le
        \frac{n+1}{n-1}
        \sup_{|v|=1}\sum_\alpha \langle S_\alpha v,v\rangle^2
        -
        \frac{2n}{n-1}\sum_\alpha
        \left(\frac{\tr S_\alpha}{m}\right)^2,
\end{equation}
with equality conditions.  Applying this to $S_\alpha=A_\alpha$ and using
\eqref{eq:intro-bochner-gauss} yields
\begin{equation}\label{eq:intro-main-integral-estimate}
        \int_\Sigma \sup_{|v|=1}|A(v,v)|^2\,d\mu
        \ge
        \frac{2n}{n+1}
        \int_\Sigma \frac{1}{m^2}|\tr A|^2\,d\mu .
\end{equation}
The remaining step is not a curvature argument but the Euclidean normalization.
The Minkowski identity
\[
        \int_\Sigma \langle \tr A,F^\perp\rangle\,d\mu
        =
        -m\Vol(\Sigma)
\]
combined with $|F|\le1$ gives
\[
        \int_\Sigma \frac{1}{m^2}|\tr A|^2\,d\mu
        \ge
        \Vol(\Sigma).
\]
Together with \eqref{eq:intro-main-integral-estimate}, this proves
$\kappa(F)^2\ge 2n/(n+1)$.

This mechanism is local and algebraic at its core.  The harmonic form is used
as an extrinsic probe: the Bochner formula detects the part of the Gauss
curvature created by the second fundamental form, and the special-geometry
algebra converts that detection into a sharp quantitative estimate.  We expect
that analogous Bochner--Gauss algebras can be useful for other special
geometric structures and other extrinsic rigidity problems.

\subsection{Rigidity from the equality algebra}
The equality case uses more information from Bochner than the usual conclusion
that a harmonic object is parallel.  Equality in \eqref{eq:intro-bochner-gauss}
indeed gives
\[
        \nabla\omega=0
        \quad\text{or}\quad
        \nabla\Theta=0,
\]
so the almost Hermitian or almost quaternion-Hermitian structure becomes
K\"ahler or quaternionic-K\"ahler.  Equality in the Minkowski step gives
$F(\Sigma)\subset S(1)$ and $\tr A=-mF$, hence the spherical immersion is
minimal.

The decisive extra input comes from equality in the algebraic estimate
\eqref{eq:intro-pointwise-algebra}.  It forces every spherical shape operator
to be trace-free Hermitian and gives, for every unit tangent vector $v$,
\[
        |A^{\sph}(v,v)|^2=\frac{n-1}{n+1}.
\]
The Gauss equation then identifies the intrinsic metric as the positive complex
or quaternionic space form with holomorphic, respectively quaternionic,
sectional curvature $2n/(n+1)$.  The Codazzi equation, together with a simple
Hermitian linear-algebra lemma, implies that the first normal bundle is
parallel and that the spherical second fundamental form is parallel.

At this point the equality algebra continues to control the model.  The
shape-operator map
\[
        E\longrightarrow \Herm_0(T\Sigma;\mathbb F),
        \qquad
        \eta\longmapsto A^{\sph}_\eta,
\]
from the first normal bundle to the trace-free $\mathbb F$-Hermitian
endomorphisms is a parallel homothety.  After reducing codimension, one
compares the immersion on the universal cover with the standard Veronese model
through this parallel homothety and applies the fundamental theorem of
submanifolds in the sphere.  The Veronese map is injective, so the comparison
descends from the universal cover and gives the global congruence.

Thus the Bochner formula supplies both the estimate and the beginning of the
classification.  The equality case of the curvature-endomorphism algebra is
strong enough to recover the full projective-space embedding geometry, not
merely a parallel intrinsic structure form.  This is the rigidity feature that
makes the method different from the classical Bochner paradigm.

\subsection{Organization}
In Section \ref{sec:models} we record the complex and quaternionic Veronese models in
the Euclidean unit ball and verify the sharp value of $\kappa$.  In
Section \ref{sec:linear-algebra} we prove the pointwise Bochner algebra estimates for
the K\"ahler form and the quaternionic fundamental four-form, including their
equality cases.  Section \ref{sec:proof-estimate-main} proves the sharp lower bound by combining
these estimates with the Bochner formula, the Gauss equation, and the Minkowski
identity.  Section \ref{sec:rigidity} proves the equality case and identifies the
immersion with the standard Veronese embedding.

%============================================================
\section{The standard projective models in the Euclidean ball}
\label{sec:models}
%============================================================

We record the sharp examples as isometric immersions of the standard projective
spaces.  Let $\mathbb F=\CC$ or $\HH$, and equip $\mathbb FP^n$ with the
standard projective metric $g_{\mathrm{FS}}$ normalized as follows.  At
$[e_0]\in\mathbb FP^n$, write
\[
        \mathbb F^{n+1}=\mathbb Fe_0\oplus\mathbb F^n .
\]
A tangent vector is represented by $v\in\mathbb F^n$, and
\[
        |v|_{g_{\mathrm{FS}}}^2
        =
        \frac{2(n+1)}{n}|v|^2 .
\]
This normalization is invariant under the projective isometry group.

For a matrix $C$ over $\mathbb F$, set
\[
        \tr_\mathbb F C
        =
        \begin{cases}
        \tr_\CC C, & \mathbb F=\CC,\\
        \Re\sum_i C_{ii}, & \mathbb F=\HH.
        \end{cases}
\]
Let
\[
        \mathcal H_\mathbb F
        =
        \{C\in \mathrm{Mat}_{n+1}(\mathbb F): C^*=C,\ \tr_\mathbb F C=0\},
        \qquad
        \inner{C_1}{C_2}=\tr_\mathbb F(C_1C_2).
\]
Then
\[
        \dim_\RR\mathcal H_\CC=(n+1)^2-1,
        \qquad
        \dim_\RR\mathcal H_\HH=(n+1)(2n+1)-1 .
\]

View $\mathbb FP^n$ as the space of right $\mathbb F$-lines.  Define
\begin{equation}\label{eq:model-FPn-map}
        \Phi_\mathbb F:(\mathbb FP^n,g_{\mathrm{FS}})
        \longrightarrow S(\mathcal H_\mathbb F),
        \qquad
        \Phi_\mathbb F([z])
        =
        \sqrt{\frac{n+1}{n}}
        \left(zz^*-\frac{1}{n+1}I_{n+1}\right),
        \qquad
        |z|=1 .
\end{equation}
This is well-defined because $(zq)(zq)^*=zz^*$ for $|q|=1$.  If $P=zz^*$,
then $P^2=P$, $\tr_\mathbb F P=1$, and
\[
        \left|P-\frac{1}{n+1}I_{n+1}\right|^2
        =
        1-\frac{2}{n+1}+\frac{1}{n+1}
        =
        \frac{n}{n+1}.
\]
Thus $\Phi_\mathbb F$ maps into the unit sphere of $\mathcal H_\mathbb F$.

We verify the isometry at $[e_0]$.  For a tangent vector represented by
$v\in\mathbb F^n$, take
\[
        z(t)=\cos(|v|t)e_0+\sin(|v|t)\frac{v}{|v|},
        \qquad
        P(t)=z(t)z(t)^* .
\]
Then
\[
        P'(0)=
        \begin{pmatrix}
        0&v^*\\
        v&0
        \end{pmatrix},
        \qquad
        P''(0)=
        \begin{pmatrix}
        -2|v|^2&0\\
        0&2vv^*
        \end{pmatrix}.
\]
Hence
\[
        |d\Phi_\mathbb F(v)|^2
        =
        \frac{n+1}{n}|P'(0)|^2
        =
        \frac{2(n+1)}{n}|v|^2
        =
        |v|_{g_{\mathrm{FS}}}^2 .
\]
Therefore $\Phi_\mathbb F$ is an isometric immersion.  In particular, a
$g_{\mathrm{FS}}$-unit tangent vector is represented by
$|v|^2=n/(2(n+1))$.

Let $A^S$ be the second fundamental form of $\Phi_\mathbb F$ in the unit
sphere and $A$ the second fundamental form in the Euclidean space
$\mathcal H_\mathbb F$.  Since $\Phi_\mathbb F$ has unit length,
\begin{equation}\label{eq:model-Euclidean-spherical-A}
        A(X,Y)=A^S(X,Y)-\inner{X}{Y}\Phi_\mathbb F .
\end{equation}
Hence, for $|u|_{g_{\mathrm{FS}}}=1$,
\begin{equation}\label{eq:model-Euclidean-spherical-norm}
        |A(u,u)|^2=|A^S(u,u)|^2+1 .
\end{equation}

We compute $A^S$ at $[e_0]$.  The spherical normal space is
\[
        N^S_{[e_0]}\mathbb FP^n
        =
        \left\{
        \begin{pmatrix}
        0&0\\
        0&B
        \end{pmatrix}:
        B^*=B,\ \tr_\mathbb F B=0
        \right\}.
\]
For a $g_{\mathrm{FS}}$-unit tangent vector represented by
$v=\sqrt{n/(2(n+1))}\,u$, with $|u|=1$, differentiating
\eqref{eq:model-FPn-map} twice and adding the radial term gives
\begin{equation}\label{eq:model-FPn-spherical-A}
        A^S(u,u)
        =
        \sqrt{\frac{n}{n+1}}
        \begin{pmatrix}
        0&0\\
        0&uu^*-\frac{1}{n}I_n
        \end{pmatrix}.
\end{equation}
Consequently,
\begin{align}
        |A^S(u,u)|^2
        &=
        \frac{n}{n+1}
        \left|uu^*-\frac{1}{n}I_n\right|^2                       \nonumber\\
        &=
        \frac{n}{n+1}
        \left(
        1-\frac{2}{n}+\frac{1}{n}
        \right)
        =
        \frac{n-1}{n+1}.
\label{eq:model-FPn-spherical-sharp}
\end{align}
By \eqref{eq:model-Euclidean-spherical-norm},
\[
        \kappa(\Phi_\mathbb F)^2
        =
        1+\frac{n-1}{n+1}
        =
        \frac{2n}{n+1}.
\]
Thus the constant in Theorem \ref{thm:main-ball-FPn} is sharp for both
$\CP^n$ and $\HP^n$.

%------------------------------------------------------------
\subsection{A second isometric immersion of the standard complex projective plane}
\label{subsec:model-nonunique-CP2}
%------------------------------------------------------------

The intrinsic metric alone does not determine the isometric immersion.  Let
\[
        V=\Phi_\CC:(\CP^2,g_{\mathrm{FS}})\longrightarrow S^7(1)\subset\RR^8
\]
be \eqref{eq:model-FPn-map}.  Then $V$ is isometric and
\[
        \kappa(V)^2=\frac43 .
\]
Set
\[
        F_0:\CP^2\longrightarrow \RR^{35},
        \qquad
        F_0=(V,0)\in\RR^8\oplus\RR^{27}.
\]
Then $F_0$ is isometric, $F_0(\CP^2)\subset S^7(1)\subset\overline B^{35}(1)$,
and
\begin{equation}\label{eq:model-F0-kappa}
        \kappa(F_0)^2=\frac43 .
\end{equation}

Define
\[
        \mathcal V:S^7(1)\longrightarrow \Sym_0(\RR^8)\simeq\RR^{35},
        \qquad
        \mathcal V(x)
        =
        \frac1{\sqrt2}
        \left(
        xx^T-\frac18I_8
        \right).
\]
For $v\in T_xS^7(1)$, so $\inner{x}{v}=0$,
\[
        d\mathcal V_x(v)
        =
        \frac1{\sqrt2}(xv^T+vx^T),
        \qquad
        |d\mathcal V_x(v)|^2
        =
        \frac12\tr (xv^T+vx^T)^2
        =
        |v|^2 .
\]
Thus $\mathcal V$ is an isometric immersion.  Also
\[
        |\mathcal V(x)|^2
        =
        \frac12\tr\left(xx^T-\frac18I_8\right)^2
        =
        \frac12
        \left(
        1-\frac14+\frac18
        \right)
        =
        \frac7{16}.
\]
Hence
\[
        F_1:=\mathcal V\circ V:(\CP^2,g_{\mathrm{FS}})
        \longrightarrow \RR^{35}
\]
is an isometric immersion and
\[
        F_1(\CP^2)\subset S^{34}\!\left(\frac{\sqrt7}{4}\right)
        \subset B^{35}(1).
\]

We compute $\kappa(F_1)$.  Along a unit-speed geodesic
$x(t)=\cos t\,x+\sin t\,v$ in $S^7(1)$,
\[
        \frac{d^2}{dt^2}\mathcal V(x(t))\bigg|_{t=0}
        =
        \sqrt2(vv^T-xx^T),
        \qquad
        \left|\sqrt2(vv^T-xx^T)\right|^2=4 .
\]
Thus
\[
        |A_{\mathcal V}(v,v)|^2=4
        \qquad
        (|v|=1).
\]
For a $g_{\mathrm{FS}}$-unit vector $u\in T\CP^2$,
\[
        A_{F_1}(u,u)
        =
        d\mathcal V(A^S_V(u,u))
        +
        A_{\mathcal V}(dV(u),dV(u)),
\]
and the two terms are orthogonal.  Hence, by
\eqref{eq:model-FPn-spherical-sharp} with $n=2$,
\begin{equation}\label{eq:model-F1-kappa}
        |A_{F_1}(u,u)|^2
        =
        |A^S_V(u,u)|^2+4
        =
        \frac13+4
        =
        \frac{13}{3}.
\end{equation}
Therefore
\[
        \kappa(F_1)^2=\frac{13}{3}\neq \kappa(F_0)^2.
\]
By \eqref{eq:model-F0-kappa} and \eqref{eq:model-F1-kappa}, the isometric
immersions $F_0$ and $F_1$ are not congruent.
\section{The Bochner algebra of the fundamental forms}
\label{sec:linear-algebra}

This section is purely pointwise.  We use only the algebraic Hermitian,
respectively quaternion-Hermitian, structure on a Euclidean vector space.  No
closedness, harmonicity, or parallelism of the fundamental forms is used. Let $V$ be a real Euclidean vector space of dimension $m$.  For
$S\in\Sym(V)$, define $S^{[p]}$ on $\Lambda^pV^*$ as follows.  If
$SE_a=\mu_aE_a$ in an orthonormal basis, then for $E^A = E^{a_1}\wedge\cdots\wedge E^{a_r}$,
\begin{align*}
        S^{[p]}E^A
        =
        \left(\sum_{a\in A}\mu_a\right)E^A .
\end{align*}
We use the convention
\begin{align*}
        q_S^{(p)}E^A
        =
        \left(\sum_{a\in A}\mu_a\right)
        \left(\tr_\RR S-\sum_{a\in A}\mu_a\right)E^A .
\end{align*}
Here $q_S^{(p)}$ is the curvature endomorphism appearing in the Bochner--Gauss formula \eqref{eq:intro-bochner-gauss}.
All sphere measures below are normalized.

%------------------------------------------------------------
\subsection{The complex projective case}
\label{subsec:complex-projective-algebra}
%------------------------------------------------------------

Let $V=\CC^n$, viewed as a real Euclidean vector space of dimension $2n$, and
let $J$ be multiplication by $i$.  We use $\inner{X}{Y}= \Re\sum_{\alpha=1}^n X_\alpha\overline{Y_\alpha}$, where the bar denotes the standard complex conjugation.  Set $\omega(X,Y)=\inner{JX}{Y}$.
For $S\in\Sym_\RR(\CC^n)$, write
\[
        S=H+\tau\Id+K,
        \qquad
        \tau=\frac{1}{2n}\tr_\RR S,
\]
where $HJ=JH$, $H^*=H$, $\tr_\CC H=0$, and $KJ+JK=0$.

The goal of this subsection is to obtain a sharp algebraic upper bound for 
\[  -\frac{\inner{\sum_\alpha q_{S_\alpha}^{(2)}\omega}{\omega}}
        {2(2n-2)|\omega|^2},\]
with clear equality criteria. See already Proposition \ref{prop:complex-projective-Bochner-algebra}.

\begin{lemma}\label{lem:complex-induced-action}
For $S=H+\tau\Id+K$,
\begin{align*}
        \inner{S^{[2]}\omega}{\omega}
        =
        \frac{2}{2n}\tr_\RR S\,|\omega|^2
        =
        2\tau|\omega|^2 .
\end{align*}
\end{lemma}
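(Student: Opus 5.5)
The identity is linear in $S$, so I will compute $\inner{S^{[2]}\omega}{\omega}$ separately for the three summands $H$, $\tau\Id$ and $K$. I first need a basis-free description of $S^{[p]}$. It acts on $\Lambda^pV^*$ as the derivation extension of $S$ (acting on covectors via the metric identification). Indeed, on the eigenbasis $E^A$ this derivation multiplies by $\sum_{a\in A}\mu_a$, which is the stated definition. Concretely, for a two-form $\beta$,
\[
(S^{[2]}\beta)(X,Y)=\beta(SX,Y)+\beta(X,SY).
\]
Since $S$ is symmetric, the sign convention for covectors does not matter.

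\textbf{The scalar part.} For $S=\tau\Id$ the derivation gives $2\tau\,\omega$. Hence
\[
\inner{(\tau\Id)^{[2]}\omega}{\omega}=2\tau|\omega|^2,
\]
which is the whole claimed value.

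\textbf{The Hermitian part.} For $S=H$ with $HJ=JH$ and $H^*=H$, write
\[
(H^{[2]}\omega)(X,Y)=\inner{JHX}{Y}+\inner{JX}{HY}=\inner{JHX}{Y}+\inner{HJX}{Y}=2\inner{JHX}{Y}.
\]
So $H^{[2]}\omega$ is the two-form associated to the skew endomorphism $2JH$. Using $\inner{\beta_P}{\beta_Q}=\tfrac12\tr(P^TQ)$ for the forms attached to skew $P,Q$ (any fixed normalization works, since it is the same one that gives $|\omega|^2$ from $J$), we get
\[
\inner{H^{[2]}\omega}{\omega}=c\,\tr_\RR\big((2JH)^TJ\big)=c\,\tr_\RR(-2HJ^2)=2c\,\tr_\RR H .
\]
Here $\tr_\RR H=2\Re\tr_\CC H=0$, so this contribution vanishes.

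\textbf{The anti-Hermitian part.} For $S=K$ with $KJ=-JK$, the same computation gives
\[
\inner{JKX}{Y}+\inner{JX}{KY}=\inner{JKX}{Y}+\inner{KJX}{Y}=\inner{(JK+KJ)X}{Y}=0,
\]
so $K^{[2]}\omega=0$ identically.

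Summing the three parts gives $2\tau|\omega|^2$. Since $\tau=\tfrac1{2n}\tr_\RR S$, this equals $\tfrac{2}{2n}\tr_\RR S\,|\omega|^2$; here I use $\tr_\RR S=\tr_\RR H+2n\tau+\tr_\RR K$ together with $\tr_\RR K=0$, which holds because $K=-JKJ^{-1}$.

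\textbf{Main obstacle.} The only delicate step is checking that the eigenbasis definition of $S^{[p]}$ agrees with the derivation formula above. This holds because both are diagonal with the same eigenvalues in the eigenbasis $E^A$, and the derivation formula is basis independent. Once that is in place, the rest is the two one-line commutation computations.
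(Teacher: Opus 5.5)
Your proof is correct. It uses the same splitting $S=H+\tau\Id+K$ and the same linearity as the paper, but it does the computation invariantly rather than in coordinates.

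The paper takes a unitary basis $e_i=z_i$, $f_i=Jz_i$ adapted to each piece and writes $\omega=\sum_i e^i\wedge f^i$. It then reads off the eigenvalue sums on $e^i\wedge f^i$: $\mu_i-\mu_i=0$ for $K$, $2\lambda_i$ with $\sum_i\lambda_i=0$ for $H$, and $2\tau$ for $\tau\Id$. You instead use the derivation formula $(S^{[2]}\beta)(X,Y)=\beta(SX,Y)+\beta(X,SY)$ together with $HJ=JH$ and $KJ=-JK$.

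This buys you two things:
\begin{enumerate}
\item You get the pointwise identities $K^{[2]}\omega=0$ and $H^{[2]}\omega=\beta_{2JH}$, which are stronger than the pairings with $\omega$ that the lemma needs.
\item You avoid the spectral fact the paper uses implicitly, that the eigenvectors of $K$ can be arranged in pairs $v,Jv$ with eigenvalues $\mu,-\mu$ forming a unitary basis. You also avoid the tacit switch between different unitary bases for $H$ and for $K$.
\end{enumerate}
As a bonus, your identities give the next lemma almost at once. From $S^{[2]}\omega=2\tau\omega+\beta_{2JH}$ you get $|S^{[2]}\omega|^2=4n\tau^2+4\tr_\CC(H^2)$, and $\inner{q^{(2)}_S\omega}{\omega}=\tr_\RR S\,\inner{S^{[2]}\omega}{\omega}-|S^{[2]}\omega|^2$. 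The $S^1$-symmetry argument for the $K$-terms there then becomes unnecessary.

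The paper's eigenbasis bookkeeping, for its part, matches the eigenvalue-sum definition of $S^{[p]}$ and $q^{(p)}_S$ directly. It is also the form that gets reused for $\Theta$ in the quaternionic case.
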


\begin{proof}
Choose a unitary basis $z_i$, and put $e_i=z_i$, $f_i=Jz_i$.  Then
$\omega=\sum_i e^i\wedge f^i$ and $|\omega|^2=n$.  The $K$-term contributes
zero, since $K$ has eigenvalues $\mu_i,-\mu_i$ on $e_i,f_i$.  If
$Hz_i=\lambda_i z_i$, then $H$ has eigenvalue $\lambda_i$ on both $e_i,f_i$,
and $\sum_i\lambda_i=0$.  Hence
\begin{align*}
        \inner{S^{[2]}\omega}{\omega}
        &=
        \sum_i
        \inner{S^{[2]}(e^i\wedge f^i)}{e^i\wedge f^i}                 \\
        &=
        \sum_i2(\lambda_i+\tau)
        =
        2n\tau
        =
        2\tau|\omega|^2
        =
        \frac{2}{2n}\tr_\RR S\,|\omega|^2 .
\end{align*}
\end{proof}

\begin{lemma}\label{lem:complex-scalar-identity}
For $S=H+\tau\Id+K$,
\begin{align*}
        -\frac{\inner{q_S^{(2)}\omega}{\omega}}
        {2(2n-2)|\omega|^2}
        =
        \frac{\tr_\CC(H^2)}{n(n-1)}-\tau^2 .
\end{align*}
\end{lemma}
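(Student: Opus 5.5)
The plan is to exploit that $S^{[2]}$ and $q^{(2)}_S$ are diagonal in a common eigenbasis of $S$. In operator form this reads
\[
        q_S^{(2)}=S^{[2]}\bigl(\tr_\RR S-S^{[2]}\bigr).
\]
Since $S^{[2]}$ is self-adjoint on $\Lambda^2V^*$, this gives
\[
        \inner{q_S^{(2)}\omega}{\omega}
        =\tr_\RR S\,\inner{S^{[2]}\omega}{\omega}-|S^{[2]}\omega|^2 .
\]
The first term is handled by Lemma~\ref{lem:complex-induced-action}. With $\tr_\RR S=2n\tau$ and $|\omega|^2=n$, it equals $2n\tau\cdot 2\tau n=4n^2\tau^2$. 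Everything then reduces to computing $|S^{[2]}\omega|^2$.

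For the second term, I identify $S^{[2]}$ with the derivation action of $S$ on $2$-forms:
\[
        (S^{[2]}\beta)(X,Y)=\beta(SX,Y)+\beta(X,SY).
\]
This matches the eigenvalue definition on $E^a\wedge E^b$. For $\beta=\omega$, symmetry of $S$ gives
\[
        (S^{[2]}\omega)(X,Y)=\inner{JSX}{Y}+\inner{SJX}{Y}.
\]
So $S^{[2]}\omega$ is the $2$-form associated with the endomorphism $JS+SJ$. Substituting $S=H+\tau\Id+K$, the relations $HJ=JH$ and $KJ=-JK$ give
\[
        JS+SJ=2J(H+\tau\Id),
\]
so the anti-Hermitian part $K$ drops out entirely.

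Next I use that a $2$-form $\beta(X,Y)=\inner{BX}{Y}$ satisfies $|\beta|^2=\tfrac12|B|^2_{\RR}$. This is consistent with $|\omega|^2=\tfrac12|J|^2=n$. Since $J$ is orthogonal,
\[
        |S^{[2]}\omega|^2=\tfrac12\,|2J(H+\tau\Id)|^2=2|H+\tau\Id|^2 .
\]
The cross term vanishes because $\tr_\RR H=2\Re\tr_\CC H=0$. Together with $|\Id|^2=2n$ and $|H|^2_\RR=2\tr_\CC(H^2)$, which holds because $H$ is complex-linear Hermitian, this gives
\[
        |S^{[2]}\omega|^2=4\tr_\CC(H^2)+4n\tau^2 .
\]

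Combining the two terms,
\[
        \inner{q_S^{(2)}\omega}{\omega}=4n(n-1)\tau^2-4\tr_\CC(H^2).
\]
Dividing by $-2(2n-2)|\omega|^2=-4n(n-1)$ yields the claimed identity. The only delicate point is checking normalizations: that the eigenvalue definition of $S^{[2]}$ agrees with the derivation action, and that the convention $|\beta|^2=\tfrac12|B|^2$ matches the norm in which $|\omega|^2=n$ was used in Lemma~\ref{lem:complex-induced-action}. As a sanity check on the constants, I would verify both in the unitary basis $e_i,f_i$ from that lemma, for instance with $S=\Id$.
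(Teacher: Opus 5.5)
Your proof is correct, and it takes a different route from the paper's.

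The paper treats $S\mapsto -\inner{q^{(2)}_S\omega}{\omega}$ as a quadratic form and splits it along $S=H+\tau\Id+K$:
\begin{itemize}
\item the pure $K$-term vanishes because, in a $K$-adapted unitary basis $e_i$, $f_i=Je_i$, the eigenvalue sum on $e^i\wedge f^i$ is $\mu_i-\mu_i=0$;
\item the mixed terms with $K$ are killed by the $S^1$-symmetry $J^{-1}KJ=-K$;
\item the $H$--$\tau\Id$ cross term is killed by $\tr_\CC H=0$;
\item the two surviving terms are evaluated separately in a unitary basis diagonalizing $H$.
\end{itemize}

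You instead use the factorization $q^{(2)}_S=S^{[2]}(\tr_\RR S-S^{[2]})$. This reduces everything to Lemma~\ref{lem:complex-induced-action} plus the single quantity $|S^{[2]}\omega|^2$. The derivation formula then identifies $S^{[2]}\omega$ with the $2$-form of $JS+SJ=2J(H+\tau\Id)$. Your argument is basis-free. It turns the disappearance of $K$, in both its diagonal and mixed contributions, into the exact identity $K^{[2]}\omega=0$. You need no polarization or symmetry argument, and no separate diagonalizing bases for $H$ and $K$. Your normalizations $|\beta|^2=\tfrac12|B|^2$ and $|H|^2_\RR=2\tr_\CC(H^2)$ are right. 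Dividing $4n^2\tau^2-4\tr_\CC(H^2)-4n\tau^2$ by $-4n(n-1)$ gives the claim.

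What the paper's componentwise eigenbasis computation buys is uniformity with the quaternionic Lemma~\ref{lem:quaternionic-scalar-identity}. There the components $S_I,S_J,S_K$ do not drop out. Evaluating $|S^{[4]}\Theta|^2$ directly would require norms of wedge products of $2$-forms, which is exactly what the coefficient counting of Lemma~\ref{lem:theta-counts} handles. Your anticommutator shortcut is most effective for $2$-forms.
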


\begin{proof}
Choose the same unitary basis as above.  The $K$-term gives zero, since the
eigenvalue sum on $e^i\wedge f^i$ is $\mu_i-\mu_i=0$.  Mixed terms with $K$
vanish by the $S^1$-symmetry, and the mixed term between $H$ and $\tau\Id$
vanishes because $\tr_\CC H=0$.

For $S=\tau\Id$, the eigenvalue sum on $e^i\wedge f^i$ is $2\tau$ and
$\tr_\RR S=2n\tau$, hence
\begin{align*}
        -\frac{\inner{q_{\tau\Id}^{(2)}\omega}{\omega}}
        {2(2n-2)|\omega|^2}
        =
        -\tau^2 .
\end{align*}
For $H$, diagonalize $Hz_i=\lambda_i z_i$, with $\sum_i\lambda_i=0$.  Since
$|\omega|^2=n$,
\begin{align*}
        -\inner{q_H^{(2)}\omega}{\omega}
        &=
        -\sum_i
        \inner{q_H^{(2)}(e^i\wedge f^i)}{e^i\wedge f^i}               \\
        &=
        -\sum_i(2\lambda_i)(\tr_\RR H-2\lambda_i)
        =
        4\tr_\CC(H^2),                                                \\
        -\frac{\inner{q_H^{(2)}\omega}{\omega}}
        {2(2n-2)|\omega|^2}
        &=
        \frac{4\tr_\CC(H^2)}{4n(n-1)}
        =
        \frac{\tr_\CC(H^2)}{n(n-1)} .
\end{align*}
Adding the two nonzero contributions proves the claim.
\end{proof}

\begin{lemma}\label{lem:complex-projection-bound}
Let $S_\alpha\in\Sym_\RR(\CC^n)$, and write
$S_\alpha=H_\alpha+\tau_\alpha\Id+K_\alpha$.  Then
\begin{align}
        \sum_\alpha\tr_\CC(H_\alpha^2)
        +
        n(n+1)\sum_\alpha\tau_\alpha^2
        \le
        n(n+1)
        \sup_{|z|=1}
        \sum_\alpha\inner{S_\alpha z}{z}^2 .
\label{eq:complex-projection-bound}
\end{align}
If equality holds, then  $K_\alpha = 0$ and   $\sum_\alpha \langle S_\alpha z,z\rangle^2 = \sup_{|z|=1}\sum_\alpha\inner{S_\alpha z}{z}^2$ for every $|z|=1$.
\end{lemma}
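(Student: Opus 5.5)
The plan is to bound the supremum from below by the average over the unit sphere of $\CC^n=\RR^{2n}$ with normalized measure, and then to compute that average exactly in terms of the decomposition $S=H+\tau\Id+K$. Write $Q(z)=\sum_\alpha\inner{S_\alpha z}{z}^2$. Since $\sup_{|z|=1}Q\ge\int_{S^{2n-1}}Q\,d\sigma$, it suffices to show
\[
        n(n+1)\int_{S^{2n-1}}Q\,d\sigma
        =\sum_\alpha\tr_\CC(H_\alpha^2)+n(n+1)\sum_\alpha\tau_\alpha^2
        +c_n\sum_\alpha|K_\alpha|^2
\]
with some constant $c_n>0$. This gives the inequality at once. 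In the equality case it forces $K_\alpha=0$, and it also forces $Q\equiv\sup Q$ on the sphere, because $Q$ is continuous and its average equals its supremum.

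First, it is enough to compute for a single $S$ and then sum over $\alpha$. Expand $\inner{Sz}{z}=\inner{Hz}{z}+\tau|z|^2+\inner{Kz}{z}$ and square. I will show the cross terms average to zero.
\begin{enumerate}[label=(\alph*)]
\item Under $z\mapsto e^{i\theta}z$ the quadratic form $\inner{Kz}{z}$ rotates by the phase $e^{2i\theta}$: it is the real part of a complex bilinear form $\beta(z,z)$ with $K=\bar\beta$-type. Meanwhile $\inner{Hz}{z}$ and $|z|^2$ are $S^1$-invariant. Averaging over the circle therefore kills the $K\cdot H$ and $K\cdot\tau$ products.
\item The $H\cdot\tau$ product averages to $\frac{2\tau}{2n}\tr_\RR H=0$, since $\tr_\CC H=0$.
\end{enumerate}
This leaves three diagonal terms. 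The term $\tau^2$ is immediate.

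For the $H$-term, diagonalize $Hz_i=\lambda_iz_i$, so that $\inner{Hz}{z}=\sum_i\lambda_i|z_i|^2$. I will use the complex-sphere moments
\[
        \int|z_i|^4\,d\sigma=\frac{2}{n(n+1)},\qquad
        \int|z_i|^2|z_j|^2\,d\sigma=\frac{1}{n(n+1)}\quad(i\ne j).
\]
Then the average is $\frac{1}{n(n+1)}\bigl(\sum_i\lambda_i^2+(\sum_i\lambda_i)^2\bigr)=\frac{\tr_\CC(H^2)}{n(n+1)}$, using $\sum_i\lambda_i=0$. Multiplying by $n(n+1)$ produces exactly the coefficient $1$ in front of $\tr_\CC(H^2)$.

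The $K$-term is the step I expect to be the main obstacle, because I need the precise structure of $K$ and not just its trace. I would handle it through the real Gaussian fourth-moment formula on $\RR^{2n}$:
\[
        \int\inner{Kz}{z}^2\,d\sigma
        =\frac{(\tr_\RR K)^2+2|K|^2}{2n(2n+2)}.
\]
Since $K$ anticommutes with $J$, it has paired eigenvalues $\pm\mu$, so $\tr_\RR K=0$. Hence the average is $\frac{|K|^2}{2n(n+1)}$, which is strictly positive unless $K=0$. This gives $c_n=\tfrac12$. Summing over $\alpha$ yields the displayed identity, and with it both the inequality and the stated equality characterization.
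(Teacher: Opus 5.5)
Your proof is correct, but it takes a different route from the paper.

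\textbf{The paper's route.} The paper works in two stages. First, for each fixed unit $z$, it averages over the circle $z_\theta=\cos\theta\, z+\sin\theta\, Jz$. On this circle $\inner{(H_\alpha+\tau_\alpha\Id)z_\theta}{z_\theta}$ is constant and $\inner{K_\alpha z_\theta}{z_\theta}=\cos 2\theta\,\inner{K_\alpha z}{z}-\sin 2\theta\,\inner{JK_\alpha z}{z}$ has zero mean. This gives the pointwise bound $\sum_\alpha\inner{(H_\alpha+\tau_\alpha\Id)z}{z}^2\le\sup_{|u|=1}\sum_\alpha\inner{S_\alpha u}{u}^2$ for every unit $z$. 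Only then does it average the $J$-invariant part over $S^{2n-1}$, using Rudin's complex moments alone. In the equality case, $K_\alpha=0$ comes from the vanishing of the discarded circle average of $\sum_\alpha\inner{K_\alpha z_\theta}{z_\theta}^2$.

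\textbf{Your route.} You average all of $Q$ over the sphere and compute the $K$-contribution exactly. Your value $\int\inner{Kz}{z}^2\,d\sigma=|K|^2/(2n(n+1))$, hence $c_n=\tfrac12$, is right. Your cross-term argument (a) is loosely phrased (the ``$K=\bar\beta$-type'' remark), but the fact it relies on is exactly the circle identity displayed above.

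\textbf{Comparison.} Your strategy is precisely the one the paper uses for the quaternionic analogue, Lemma~\ref{lem:quaternionic-projection-bound}. So your version makes the complex and quaternionic cases uniform. It also yields an explicit remainder $\tfrac12\sum_\alpha|K_\alpha|^2$, and both equality conclusions follow at once from ``average $=$ supremum''. The paper's version needs no real fourth-moment formula. It also extracts the pointwise statement that the $J$-invariant parts are dominated by the supremum at every $z$, not just on average.

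\textbf{A simplification.} Once you invoke $\int\inner{Rz}{z}\inner{Tz}{z}\,d\sigma=\frac{\tr_\RR R\,\tr_\RR T+2\tr_\RR(RT)}{2n(2n+2)}$, you could apply it to all of $S_\alpha$. The splitting $S=H+\tau\Id+K$ is orthogonal for the trace inner product, and $H,K$ are trace-free. So every cross term vanishes directly. The $H$-term comes out as $2\tr_\RR(H^2)/(4n(n+1))=\tr_\CC(H^2)/(n(n+1))$. This makes both the $S^1$-argument and the complex moments unnecessary.
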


\begin{proof}
Set $C=\sup_{|z|=1}\sum_\alpha\inner{S_\alpha z}{z}^2$. Fix any $z\in \CC^n$ with $|z|=1$. Let $z_\theta = \cos\theta z + \sin\theta Jz$.  Since $K_\alpha J = -J K_\alpha$, we have $\langle K_\alpha z_\theta, z_\theta\rangle = \cos(2\theta)\langle K_\alpha z, z\rangle -\sin(2\theta)\langle J K_\alpha z, z\rangle$. Integrating gives
\[
	\int_0^{2\pi}\langle K_\alpha z_\theta, z_\theta\rangle = 0.
\]
Note that  $H_\alpha+\tau_\alpha\Id$ commutes with $J$, thus $\inner{(H_\alpha+\tau_\alpha\Id)z_\theta}{z_\theta} = \inner{(H_\alpha+\tau_\alpha\Id)z}{z}$ for every $\theta$. This implies
\[
	\frac{1}{2\pi}\int_0^{2\pi}\sum_\alpha\inner{S_\alpha z_\theta}{z_\theta}^2 = \sum_\alpha\inner{(H_\alpha+\tau_\alpha\Id)z}{z}^2 + \frac{1}{2\pi}\int_0^{2\pi}\sum_\alpha\inner{K_\alpha z_\theta}{z_\theta}^2.
\]
In particular, the assumption gives
\begin{equation}\label{eq:complex-projection-bound-z}
	\sum_\alpha\inner{(H_\alpha+\tau_\alpha\Id)z}{z}^2 \leq C.
\end{equation}

On the other hand $B_\alpha=H_\alpha+\tau_\alpha\Id$ is Hermitian.
Let $d\sigma$ denote the normalized spherical measure on
$S^{2n-1}\subset\CC^n$, so that
$\int_{S^{2n-1}}d\sigma=1$.  By a unitary change of coordinates, we may
diagonalize $B_\alpha$, say
\[
        \inner{B_\alpha z}{z}
        =
        \sum_{i=1}^n \lambda_i |z_i|^2 .
\]
Since $d\sigma$ is invariant under unitary transformations, Rudin's monomial integral formula \cite[Proposition 1.4.9]{RudinBall} gives
\[
        \int_{S^{2n-1}} |z_i|^4\,d\sigma
        =
        \frac{2}{n(n+1)},
        \qquad
        \int_{S^{2n-1}} |z_i|^2|z_j|^2\,d\sigma
        =
        \frac{1}{n(n+1)}
        \quad (i\neq j).
\]
Therefore
\begin{equation}\label{eq:complex-projection-bound-moment}
\begin{aligned}
        \int_{S^{2n-1}}\inner{B_\alpha z}{z}^2\,d\sigma
        &=
        \sum_i\lambda_i^2
        \int_{S^{2n-1}} |z_i|^4\,d\sigma
        +
        2\sum_{i<j}\lambda_i\lambda_j
        \int_{S^{2n-1}} |z_i|^2|z_j|^2\,d\sigma \\
        &=
        \frac{2}{n(n+1)}\sum_i \lambda_i^2
        +
        \frac{2}{n(n+1)}\sum_{i<j}\lambda_i\lambda_j \\
        &=
        \frac{\tr_\CC(B_\alpha^2)+(\tr_\CC B_\alpha)^2}{n(n+1)} \\
        &=
        \frac{\tr_\CC(H_\alpha^2)+n(n+1)\tau_\alpha^2}{n(n+1)}.
\end{aligned}
\end{equation}
Hence \eqref{eq:complex-projection-bound} follows from combining \eqref{eq:complex-projection-bound-z} and \eqref{eq:complex-projection-bound-moment}. In particular, equality holds in \eqref{eq:complex-projection-bound} if and only if $\sum_\alpha\inner{K_\alpha z_\theta}{z_\theta}^2=0$ and $\sum_\alpha\inner{S_\alpha z_\theta}{z_\theta}^2  = C$ for all $\theta$. This implies the remaining assertion.
\end{proof}

\begin{proposition}[Complex projective Bochner algebra]
\label{prop:complex-projective-Bochner-algebra}
Let $S_\alpha\in\Sym_\RR(\CC^n)$, and write
\begin{align*}
        S_\alpha=H_\alpha+\tau_\alpha\Id+K_\alpha,
        \qquad
        \tau_\alpha=\frac{1}{2n}\tr_\RR S_\alpha .
\end{align*}
Then
\begin{align}
        -\frac{\inner{\sum_\alpha q_{S_\alpha}^{(2)}\omega}{\omega}}
        {2(2n-2)|\omega|^2}
        \le
        \frac{n+1}{n-1}
        \sup_{|z|=1}
        \sum_\alpha\inner{S_\alpha z}{z}^2
        -
        \frac{2n}{n-1}\sum_\alpha\tau_\alpha^2 .
\label{eq:complex-projective-Bochner-algebra}
\end{align}
If equality holds, then equality holds in Lemma
\ref{lem:complex-projection-bound}. 
\end{proposition}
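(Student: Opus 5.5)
Sum Lemma \ref{lem:complex-scalar-identity} over $\alpha$: since $q^{(2)}_S$ depends only on $S$ and the left side of \eqref{eq:complex-projective-Bochner-algebra} is additive in $\alpha$, we get
\[
        -\frac{\inner{\sum_\alpha q_{S_\alpha}^{(2)}\omega}{\omega}}{2(2n-2)|\omega|^2}
        =
        \frac{1}{n(n-1)}\sum_\alpha\tr_\CC(H_\alpha^2)-\sum_\alpha\tau_\alpha^2 .
\]
Next apply Lemma \ref{lem:complex-projection-bound}, which in the form divided by $n(n-1)$ reads
\[
        \frac{1}{n(n-1)}\sum_\alpha\tr_\CC(H_\alpha^2)
        \le
        \frac{n+1}{n-1}\sup_{|z|=1}\sum_\alpha\inner{S_\alpha z}{z}^2
        -\frac{n+1}{n-1}\sum_\alpha\tau_\alpha^2 .
\]
Substituting this bound into the identity, the $\tau$-terms combine as
\[
        -\frac{n+1}{n-1}-1=-\frac{2n}{n-1},
\]
which is exactly the coefficient in \eqref{eq:complex-projective-Bochner-algebra}.

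For the equality statement: the first step is an identity, so the only inequality used is \eqref{eq:complex-projection-bound}, multiplied by the positive constant $1/(n(n-1))$ (positive since $n\ge2$). Hence equality in \eqref{eq:complex-projective-Bochner-algebra} forces equality in Lemma \ref{lem:complex-projection-bound}.

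There is no real obstacle, since both lemmas are already established. The only care needed is in the bookkeeping: checking that summation over $\alpha$ commutes with the normalization, and that the coefficients $\frac{n+1}{n-1}$ and $\frac{2n}{n-1}$ come out exactly as stated.
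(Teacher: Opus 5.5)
Your proposal is correct and matches the paper's proof: you sum Lemma~\ref{lem:complex-scalar-identity} over $\alpha$, divide the bound of Lemma~\ref{lem:complex-projection-bound} by $n(n-1)>0$, and combine the $\tau$-coefficients as $-\frac{n+1}{n-1}-1=-\frac{2n}{n-1}$. The equality clause follows as you say, because that rescaled projection bound is the only inequality used.
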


\begin{proof}
By Lemma \ref{lem:complex-scalar-identity},
\begin{align*}
        -\frac{\inner{\sum_\alpha q_{S_\alpha}^{(2)}\omega}{\omega}}
        {2(2n-2)|\omega|^2}
        =
        \frac{\sum_\alpha\tr_\CC(H_\alpha^2)}{n(n-1)}
        -
        \sum_\alpha\tau_\alpha^2 .
\end{align*}
By Lemma \ref{lem:complex-projection-bound},
\begin{align*}
        \sum_\alpha\tr_\CC(H_\alpha^2)
        +
        n(n+1)\sum_\alpha\tau_\alpha^2
        \le
        n(n+1)
        \sup_{|z|=1}
        \sum_\alpha\inner{S_\alpha z}{z}^2 .
\end{align*}
Combining the two estimates gives
\begin{align*}
        -\frac{\inner{\sum_\alpha q_{S_\alpha}^{(2)}\omega}{\omega}}
        {2(2n-2)|\omega|^2}
        &\le
        \frac{n+1}{n-1}
        \sup_{|z|=1}
        \sum_\alpha\inner{S_\alpha z}{z}^2
        -
        \frac{n+1}{n-1}\sum_\alpha\tau_\alpha^2
        -
        \sum_\alpha\tau_\alpha^2                                      \\
        &=
        \frac{n+1}{n-1}
        \sup_{|z|=1}
        \sum_\alpha\inner{S_\alpha z}{z}^2
        -
        \frac{2n}{n-1}\sum_\alpha\tau_\alpha^2 .
\end{align*}
If equality holds in \eqref{eq:complex-projective-Bochner-algebra}, then
equality holds in Lemma \ref{lem:complex-projection-bound}.
\end{proof}

%------------------------------------------------------------
\subsection{The quaternionic projective case}
\label{subsec:quaternionic-projective-algebra}
%------------------------------------------------------------

Let $V=\HH^n$, $n\geq 2$, viewed as a real Euclidean vector space of dimension $4n$. We use $\inner{X}{Y}=\Re\sum_{\alpha=1}^n X_\alpha\overline{Y_\alpha}$, where
$\overline{a+b i+c j+d k}=a-b i-c j-d k$.  Fix an admissible quaternionic frame $I,J,K$, and set
\[
        \omega_\phi(X,Y)=\inner{\phi X}{Y}\quad(\phi\in\{I,J,K\}),
        \qquad
        \Theta=\frac16(\omega_I^2+\omega_J^2+\omega_K^2).
\]
Here $\omega_I^2=\omega_I\wedge\omega_I$, and similarly for $J,K$.  We use the
orthogonal decomposition
\[
        \Sym_\RR(V)
        =
        \Herm_0(\HH^n)\oplus \RR\Id
        \oplus \mathcal S_I\oplus \mathcal S_J\oplus \mathcal S_K,
\]
where
\begin{align*}
        \mathcal S_I
        &=\{T\in\Sym_\RR(V): TI=IT,\ TJ=-JT,\ TK=-KT\},\\
        \mathcal S_J
        &=\{T\in\Sym_\RR(V): TI=-IT,\ TJ=JT,\ TK=-KT\},\\
        \mathcal S_K
        &=\{T\in\Sym_\RR(V): TI=-IT,\ TJ=-JT,\ TK=KT\}.
\end{align*}
Thus every $S\in\Sym_\RR(V)$ is written uniquely as
\[
        S=H+\tau\Id+S_I+S_J+S_K,
        \qquad
        \tau=\frac{1}{4n}\tr_\RR S,
\]
with $H\in\Herm_0(\HH^n)$ and $S_\phi\in\mathcal S_\phi$ for
$\phi\in\{I,J,K\}$.

The goal of this subsection is to obtain a sharp algebraic upper bound for 
\[ -\frac{\inner{\sum_\alpha q_{S_\alpha}^{(4)}\Theta}{\Theta}}
        {4(4n-4)|\Theta|^2},\]
with clear equality criteria. See already Proposition \ref{prop:quaternionic-projective-Bochner-algebra}.

\begin{lemma}\label{lem:theta-counts}
Choose a quaternionic orthonormal basis $e_1,\ldots,e_n$, and use the
associated real orthonormal basis $e_1,Ie_1,Je_1,Ke_1,\ldots,e_n,Ie_n,Je_n,Ke_n$.
Write $\Theta=\sum_A c_A E^A$ in the induced
basis of $\Lambda^4V^*$.  Then
\[
        |\Theta|^2=\frac{n(2n+1)}{3},
        \qquad
        \sum_{\#(A\cap\{e_1,Ie_1\})=1} c_A^2
        =
        \frac{8(n-1)}{9}.
\]
Moreover, if $m_i(A)=\#(A\cap\{e_i,Ie_i,Je_i,Ke_i\})$, then
\[
        \sum_A c_A^2 m_1(A)^2
        -
        \sum_A c_A^2 m_1(A)m_2(A)
        =
        \frac{16(n+1)}{n(2n+1)}|\Theta|^2 .
\]
\end{lemma}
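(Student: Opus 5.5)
The plan is to write $\Theta$ explicitly in the real basis $e_i,Ie_i,Je_i,Ke_i$ and then read off all three identities by counting monomials. No structure beyond this explicit expansion should be needed.

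\emph{Expansion of $\Theta$.} For each $\phi\in\{I,J,K\}$ decompose $\omega_\phi=\sum_{i=1}^n\omega_\phi^{(i)}$, where $\omega_\phi^{(i)}$ is the restriction of $\omega_\phi$ to the $i$-th quaternionic line $\Span_\RR\{e_i,Ie_i,Je_i,Ke_i\}$. In this basis each $\omega_\phi^{(i)}$ is a sum of two decomposable terms $\pm E^{a}\wedge E^{b}\pm E^{c}\wedge E^{d}$, with coefficients $\pm1$. The three pairings are:
\begin{itemize}
\item for $\phi=I$: $\{e_i,Ie_i\},\{Je_i,Ke_i\}$;
\item for $\phi=J$: $\{e_i,Je_i\},\{Ie_i,Ke_i\}$;
\item for $\phi=K$: $\{e_i,Ke_i\},\{Ie_i,Je_i\}$.
\end{itemize}
Expanding the squares gives
\[
\omega_\phi^2=\sum_i(\omega_\phi^{(i)})^2+2\sum_{i<j}\omega_\phi^{(i)}\wedge\omega_\phi^{(j)},
\qquad
(\omega_\phi^{(i)})^2=\pm2\,\mathrm{vol}_i ,
\]
where $\mathrm{vol}_i$ is the volume form of the $i$-th line. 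The three forms $\omega_I^{(i)},\omega_J^{(i)},\omega_K^{(i)}$ span the same half ($\Lambda^+$ or $\Lambda^-$) of $\Lambda^2$ of that line. Hence the three squares carry the same sign, and $\Theta$ restricts to $\pm\mathrm{vol}_i$ on each line. Consequently
\[
\Theta=\pm\sum_i\mathrm{vol}_i+\frac13\sum_{i<j}\sum_\phi\omega_\phi^{(i)}\wedge\omega_\phi^{(j)} .
\]
For fixed $i<j$ and $\phi$, the wedge $\omega_\phi^{(i)}\wedge\omega_\phi^{(j)}$ produces $4$ monomials. For different $\phi$ these monomials are distinct, because the block-$i$ pairs differ. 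So each pair $i<j$ contributes exactly $12$ distinct basis monomials of type $(2,2)$, each with coefficient $\pm\frac13$, and these are distinct from the $\mathrm{vol}_i$.

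\emph{First identity.} Summing squared coefficients gives
\[
|\Theta|^2=n+\binom n2\cdot\frac{12}{9}=\frac{n(2n+1)}{3}.
\]

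\emph{Second identity.} A monomial with exactly one of $e_1,Ie_1$ cannot be $\mathrm{vol}_1$ (which contains both), nor $\mathrm{vol}_i$ for $i\ne1$ (which contains neither). It must therefore be a cross term between block $1$ and some block $j\neq1$, whose block-$1$ pair contains exactly one of $e_1,Ie_1$. This rules out $\phi=I$. For each of $\phi=J,K$, both block-$1$ pairs qualify and there are $2$ block-$j$ pairs, giving $8$ monomials per $j$, each with $c_A^2=\frac19$. The total is $\frac{8(n-1)}9$.

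\emph{Third identity.} Only monomials meeting block $1$ contribute.
\begin{itemize}
\item $\mathrm{vol}_1$ has $m_1=4$ and $m_2=0$, contributing $16$ to the first sum.
\item The $12(n-1)$ cross terms between blocks $1$ and $j$ have $m_1=2$, contributing $4\cdot\frac{12(n-1)}9=\frac{16(n-1)}3$ to the first sum.
\item In the second sum only the cross terms between blocks $1$ and $2$ survive, with $m_1m_2=4$, contributing $4\cdot\frac{12}9=\frac{16}3$.
\end{itemize}
The difference is therefore
\[
16+\frac{16(n-2)}3=\frac{16(n+1)}3=\frac{16(n+1)}{n(2n+1)}|\Theta|^2 .
\]

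The only genuinely delicate point is the sign bookkeeping: checking that the three $(\omega_\phi^{(i)})^2$ add rather than cancel, and that no monomials coincide across different $\phi$. Since only squares $c_A^2$ enter the counts, the signs of the cross terms are irrelevant. The relation $IJK=-\Id$ guarantees that the $\omega_\phi^{(i)}$ are simultaneously self-dual (or simultaneously anti-self-dual), which is what prevents cancellation in $(\omega_\phi^{(i)})^2$. This can be verified by a direct check on $\HH$ with $\phi$ acting by left multiplication.
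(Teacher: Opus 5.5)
Your proof is correct. For the expansion of $\Theta$ and the first two identities it follows the paper's route. Your $\omega_\phi^{(i)}$ are the paper's $\eta_{\phi,i}$, and the paper also writes $\Theta$ as the sum of the block volume forms plus $\frac13\sum_{r<s}\sum_\phi\eta_{\phi,r}\wedge\eta_{\phi,s}$, then counts squared coefficients as you do. It fixes the sign of $\eta_{\phi,r}^2$ by direct computation rather than by your self-duality remark, but either works.

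The routes differ for the third identity. You read $m_1(A)$ and $m_2(A)$ off the explicit monomials and obtain the two sums separately:
\[
\sum_A c_A^2m_1(A)^2=16+\tfrac{16(n-1)}{3},
\qquad
\sum_A c_A^2m_1(A)m_2(A)=\tfrac{16}{3}.
\]
The paper never enumerates these sums and instead double counts:
\begin{itemize}
\item From $\sum_i m_i(A)=4$ and symmetry among the blocks, it gets $\sum_A c_A^2m_1(A)=\frac4n|\Theta|^2$.
\item It notes that $m_1(A)(4-m_1(A))$ counts the six pairs in $\{e_1,Ie_1,Je_1,Ke_1\}$ meeting $A$ in exactly one element. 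Each pair contributes $\frac{8(n-1)}{9}$ by the second identity.
\item It then eliminates the mixed sum using $16|\Theta|^2=n\sum_A c_A^2m_1(A)^2+n(n-1)\sum_A c_A^2m_1(A)m_2(A)$.
\end{itemize}

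Your enumeration is shorter, and it makes the second identity unnecessary for the third; in the paper that identity serves exactly as the input to the double count. The paper's argument does isolate the symmetric fact $\sum_A c_A^2m_1(A)=\frac4n|\Theta|^2$, which is reused in Lemma~\ref{lem:quaternionic-induced-action}. Your explicit list gives that fact just as easily, since $4+\frac{8(n-1)}{3}=\frac{4(2n+1)}{3}$.
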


\begin{proof}
Put
\begin{align*}
        \eta_{I,r}
        &= e_r^*\wedge(Ie_r)^*+(Je_r)^*\wedge(Ke_r)^*,\\
        \eta_{J,r}
        &= e_r^*\wedge(Je_r)^*-(Ie_r)^*\wedge(Ke_r)^*,\\
        \eta_{K,r}
        &= e_r^*\wedge(Ke_r)^*+(Ie_r)^*\wedge(Je_r)^* .
\end{align*}
Then $\omega_\phi=\sum_r\eta_{\phi,r}$ for $\phi\in\{I,J,K\}$, and
\[
        \eta_{I,r}^2=\eta_{J,r}^2=\eta_{K,r}^2
        =2e_r^*\wedge(Ie_r)^*\wedge(Je_r)^*\wedge(Ke_r)^*.
\]
It follows that
\begin{align*}
        \Theta
        &=
        \sum_r e_r^*\wedge(Ie_r)^*\wedge(Je_r)^*\wedge(Ke_r)^*\\
        &\quad
        +\frac13\sum_{r<s}
        \big(
        \eta_{I,r}\wedge\eta_{I,s}
        +\eta_{J,r}\wedge\eta_{J,s}
        +\eta_{K,r}\wedge\eta_{K,s}
        \big).
\end{align*}
The summands displayed above are mutually orthogonal, and
$|\eta_{\phi,r}\wedge\eta_{\phi,s}|^2=4$ for
$\phi\in\{I,J,K\}$.  Hence
\[
        |\Theta|^2=n+\frac19\binom n2\cdot 3\cdot 4
        =\frac{n(2n+1)}3.
\]
For fixed $s\ne1$, the only summands which contain exactly one of
$e_1,Ie_1$ are
\[
        \eta_{J,1}\wedge\eta_{J,s},
        \qquad
        \eta_{K,1}\wedge\eta_{K,s}.
\]
Each has four basis terms, all with coefficient squared $1/9$.  Therefore
\[
        \sum_{\#(A\cap\{e_1,Ie_1\})=1} c_A^2
        =(n-1)\frac{4+4}{9}
        =\frac{8(n-1)}9.
\]

With $m_i(A)$ as above, $\sum_i m_i(A)=4$ for every $A$, and symmetry gives
\[
        \sum_A c_A^2 m_1(A)
        =
        \frac4n|\Theta|^2 .
\]
For each multi-index $A$, the number
\[
        m_1(A)\big(4-m_1(A)\big)
\]
counts the unordered pairs in $\{e_1,Ie_1,Je_1,Ke_1\}$ which meet
$A$ in exactly one element.  There are six such pairs, and each pair
has the same coefficient sum as $\{e_1,Ie_1\}$.  Hence
\begin{align*}
        \sum_A c_A^2 m_1(A)^2
        &=
        4\sum_A c_A^2 m_1(A)
        -
        \sum_A c_A^2 m_1(A)\big(4-m_1(A)\big)                \\
        &=
        \frac{16}{n}|\Theta|^2
        -6\frac{8(n-1)}9 .
\end{align*}
Also, since $\sum_i m_i(A)=4$,
\begin{align*}
        16|\Theta|^2
        &=
        \sum_A c_A^2
        \left(
        \sum_{i=1}^n m_i(A)
        \right)^2                                                        \\
        &=
        n\sum_A c_A^2 m_1(A)^2
        +
        n(n-1)\sum_A c_A^2 m_1(A)m_2(A) .
\end{align*}
Substituting the preceding value of $\sum_A c_A^2 m_1(A)^2$ gives
\[
        \sum_A c_A^2 m_1(A)^2
        -
        \sum_A c_A^2 m_1(A)m_2(A)
        =
        \frac{16}{n}|\Theta|^2-\frac{16n}{3}
        =
        \frac{16(n+1)}{n(2n+1)}|\Theta|^2 .
\]
\end{proof}

\begin{lemma}\label{lem:quaternionic-induced-action}
For $S=H+\tau\Id+S_I+S_J+S_K$,
\[
        \inner{S^{[4]}\Theta}{\Theta}
        =
        \frac4{4n}\tr_\RR S\,|\Theta|^2
        =4\tau|\Theta|^2 .
\]
\end{lemma}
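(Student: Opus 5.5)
The plan is to mirror the proof of Lemma \ref{lem:complex-induced-action}. The map $S\mapsto\inner{S^{[4]}\Theta}{\Theta}$ is linear in $S$, since $S^{[4]}$ is the derivation extension of $S$ to $\Lambda^4V^*$. So it suffices to evaluate it on each summand of
\[
S=H+\tau\Id+S_I+S_J+S_K
\]
separately and then add the results.

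The scalar part is immediate. Every $E^A$ is an eigenvector of $(\tau\Id)^{[4]}$ with eigenvalue $4\tau$, so this summand contributes $4\tau|\Theta|^2$. It remains to show that $H$ and the three $S_\phi$ contribute zero.

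For $H\in\Herm_0(\HH^n)$:
\begin{itemize}
\item Since $H$ commutes with $I,J,K$ and is symmetric, it is quaternion-Hermitian. We diagonalize it in a quaternionic orthonormal basis $e_1,\dots,e_n$ with $He_r=\lambda_re_r$, and then $H$ has eigenvalue $\lambda_r$ on each of $e_r,Ie_r,Je_r,Ke_r$.
\item Trace-freeness means $\sum_r\lambda_r=0$.
\item In the expansion $\Theta=\sum_Ac_AE^A$ with respect to the associated real basis, $E^A$ has eigenvalue $\sum_r m_r(A)\lambda_r$. Hence
\[
\inner{H^{[4]}\Theta}{\Theta}=\sum_r\lambda_r\sum_Ac_A^2m_r(A).
\]
\item By the symmetry among the indices $r$ used in Lemma \ref{lem:theta-counts}, namely $\sum_Ac_A^2m_r(A)=\frac4n|\Theta|^2$ for every $r$, this equals $\frac4n|\Theta|^2\sum_r\lambda_r=0$.
\end{itemize}

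For $T\in\mathcal S_I$, I would use an averaging argument. The relation $TI=IT$ together with anticommutation with $J$ and $K$ gives $JTJ^{-1}=-T$. The map $J$ is an orthogonal transformation that preserves $\Theta$, because conjugation by $J$ permutes $\omega_I,\omega_J,\omega_K$ up to sign, and $\Theta$ is frame-independent. Write $\rho$ for the induced action on forms. Naturality of the derivation extension gives $\rho(J)T^{[4]}\rho(J)^{-1}=(JTJ^{-1})^{[4]}$. Therefore
\[
\inner{T^{[4]}\Theta}{\Theta}=\inner{(JTJ^{-1})^{[4]}\Theta}{\Theta}=-\inner{T^{[4]}\Theta}{\Theta},
\]
so this term is $0$. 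The same argument handles $\mathcal S_J$ using $I$, and $\mathcal S_K$ using $I$ as well.

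The main point requiring care is the sign and naturality bookkeeping in this last step, specifically that $\Theta$ is invariant under pullback by $J$. As an alternative check, a direct computation is available: $\inner{T^{[4]}\Theta}{\Theta}=\sum_{a}\inner{TE_a}{E_a}\,w_a$, where $w_a=\sum_{A\ni a}c_A^2$ depends only on the quaternionic line of $a$, by the symmetry already used in Lemma \ref{lem:theta-counts}. The quantity $\sum_{a\in\text{line }r}\inner{TE_a}{E_a}$ vanishes for $T\in\mathcal S_\phi$ because $T$ anticommutes with some unit $\psi\in\{I,J,K\}$ that maps the line to itself. Either route gives $0$. Summing the contributions yields $4\tau|\Theta|^2=\frac{4}{4n}\tr_\RR S\,|\Theta|^2$.
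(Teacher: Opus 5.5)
Your proposal is correct and follows essentially the same route as the paper: linearity in $S$, the scalar part acting by $4\tau$, the $H$-part vanishing after diagonalizing in a quaternionic orthonormal basis and using $\sum_A c_A^2 m_r(A)=\frac4n|\Theta|^2$ together with $\sum_r\lambda_r=0$, and the $\mathcal S_\phi$-parts vanishing by conjugating with a unit of $\{I,J,K\}$ that anticommutes with them and preserves $\Theta$. Your ``alternative check'' also needs the off-diagonal terms $\langle\iota_{E_a}\Theta,\iota_{E_b}\Theta\rangle$, $a\neq b$, to vanish; this holds by $Sp(n)Sp(1)$-invariance, but it is not needed for your main argument.
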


\begin{proof}
The terms $S_I,S_J,S_K$ vanish by $Sp(1)$-symmetry.  For instance,
$J^{-1}S_IJ=-S_I$ and $J^*\Theta=\Theta$, so
\[
        \inner{S_I^{[4]}\Theta}{\Theta}
        =
        \inner{(J^{-1}S_IJ)^{[4]}\Theta}{\Theta}
        =
        -\inner{S_I^{[4]}\Theta}{\Theta}.
\]
The same argument applies to $S_J$ and $S_K$.

Choose a quaternionic orthonormal basis diagonalizing $H$:
$He_i=\lambda_i e_i$, $\sum_i\lambda_i=0$.  Using the notation of
Lemma~\ref{lem:theta-counts},
\[
        \inner{H^{[4]}\Theta}{\Theta}
        =
        \sum_i\lambda_i
        \sum_A c_A^2 m_i(A)
        =
        \frac4n|\Theta|^2\sum_i\lambda_i
        =0.
\]
Finally, $(\tau\Id)^{[4]}$ acts on $\Lambda^4V^*$ by multiplication by
$4\tau$.  This proves the assertion.
\end{proof}

\begin{lemma}\label{lem:quaternionic-scalar-identity}
For $S=H+\tau\Id+S_I+S_J+S_K$,
\[
        -\frac{\inner{q_S^{(4)}\Theta}{\Theta}}
        {4(4n-4)|\Theta|^2}
        =
        \frac{n+1}{n(n-1)(2n+1)}\tr_\HH(H^2)
        -\tau^2
        +\frac{1}{12n(2n+1)}
        \sum_{\phi\in\{I,J,K\}}|S_\phi|^2 .
\]
\end{lemma}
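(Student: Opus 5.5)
The plan is to write $q^{(4)}_S$ invariantly and then reduce everything to the coefficient counts of Lemma~\ref{lem:theta-counts}. First I would observe that $S^{[4]}$ is the derivation extension of $S$ to $\Lambda^4V^*$, which is basis independent. In an eigenbasis of $S$ the definition of $q^{(4)}_S$ then reads $q^{(4)}_S=(\tr_\RR S)\,S^{[4]}-(S^{[4]})^2$ as operators. Hence
\[
        \inner{q^{(4)}_S\Theta}{\Theta}
        =
        \tr_\RR S\,\inner{S^{[4]}\Theta}{\Theta}
        -
        |S^{[4]}\Theta|^2 .
\]
By Lemma~\ref{lem:quaternionic-induced-action}, the first term equals $4n\tau\cdot 4\tau|\Theta|^2=16n\tau^2|\Theta|^2$. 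It therefore remains to compute the quadratic form $S\mapsto|S^{[4]}\Theta|^2$.

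Next I would show that this quadratic form is diagonal in the decomposition $S=H+\tau\Id+S_I+S_J+S_K$. The cross terms are killed by conjugation symmetries, in the same way as in the proof of Lemma~\ref{lem:quaternionic-induced-action}. Conjugation by $\phi\in\{I,J,K\}$ preserves $\Theta$ and the metric, fixes $H$, $\tau\Id$ and $S_\phi$, and changes the sign of the other two $S_\psi$. This removes all cross terms between distinct $S_\psi$, and between $S_\psi$ and $H+\tau\Id$. The cross term between $H$ and $\tau\Id$ is $8\tau\inner{H^{[4]}\Theta}{\Theta}$, which vanishes by Lemma~\ref{lem:quaternionic-induced-action}. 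The scalar part contributes $16\tau^2|\Theta|^2$.

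For $H$ I would diagonalize in a quaternionic orthonormal basis with $He_i=\lambda_ie_i$. Then $H^{[4]}E^A=(\sum_i\lambda_im_i(A))E^A$, and the symmetry of the counts gives
\[
        |H^{[4]}\Theta|^2=\Big(\sum_A c_A^2m_1^2-\sum_A c_A^2m_1m_2\Big)\sum_i\lambda_i^2+\Big(\sum_A c_A^2m_1m_2\Big)\Big(\sum_i\lambda_i\Big)^2 .
\]
Since $\sum_i\lambda_i=0$, Lemma~\ref{lem:theta-counts} turns this into $\frac{16(n+1)}{n(2n+1)}|\Theta|^2\tr_\HH(H^2)$. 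Assembling and dividing by $-16(n-1)|\Theta|^2$, the scalar part gives $(16n-16)\tau^2/(16(n-1))=\tau^2$ with the sign $-\tau^2$. The $H$ part gives $\frac{n+1}{n(n-1)(2n+1)}\tr_\HH(H^2)$. The claim is thus reduced to
\[
        |S_\phi^{[4]}\Theta|^2=\frac{4(n-1)}{9}|S_\phi|^2 ,
\]
using $|\Theta|^2=n(2n+1)/3$.

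The main obstacle is this last identity, and I would prove it in two steps.
\begin{enumerate}
\item By the $Sp(1)$ symmetry it suffices to treat $\phi=I$. I would identify $\mathcal S_I$ with an irreducible $Sp(n)$-module: via $\Sym_\RR(E\otimes H)=S^2E\otimes S^2H\oplus\Lambda^2E\otimes\Lambda^2H$ it is one copy of $S^2E\cong\mathfrak{sp}(n)$. The map $S_I\mapsto S_I^{[4]}\Theta$ is $Sp(n)$-equivariant, so by Schur's lemma $|S_I^{[4]}\Theta|^2$ is a constant multiple of $|S_I|^2$.
\item To fix the constant I would evaluate on the test element $T$ with eigenvalue $+1$ on $e_1,Ie_1$ and $-1$ on $Je_1,Ke_1$, and $0$ elsewhere. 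This element lies in $\mathcal S_I$ and has $|T|^2=4$. Then $T^{[4]}E^A=(\#(A\cap\{e_1,Ie_1\})-\#(A\cap\{Je_1,Ke_1\}))E^A$. The explicit expression for $\Theta$ in terms of the $\eta_{\phi,r}$, as in Lemma~\ref{lem:theta-counts}, shows which basis $4$-forms have nonzero weight. The $r=1$ term has weight $0$, and so do the $\eta_{I,1}\wedge\eta_{I,s}$ terms. The $J,K$ terms contribute weight $\pm1$ on each of their $8(n-1)$ coefficients of size $1/9$. This gives $16(n-1)/9=\frac{4(n-1)}9|T|^2$, as required.
\end{enumerate}
If the irreducibility claim proves delicate, I would instead compute $|S_I^{[4]}\Theta|^2$ for a general element. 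I would write $S_I$ in a quaternionic basis adapted to a maximal torus and use the same counts directly.
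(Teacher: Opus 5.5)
Up to the reduction to $|S_\phi^{[4]}\Theta|^2=\frac{4(n-1)}{9}|S_\phi|^2$, your argument is correct and is the paper's; your conjugations by $I,J,K$ just make explicit the symmetry the paper invokes. The paper handles $T\in\mathcal S_I$ by a normal form: a quaternionic orthonormal basis with eigenvalues $\mu_i,\mu_i,-\mu_i,-\mu_i$ on $e_i,Ie_i,Je_i,Ke_i$, followed by a direct count. Replacing this by Schur's lemma is legitimate. The map $T\mapsto TI$ is an $Sp(n)$-equivariant isomorphism of $\mathcal S_I$ onto $\mathfrak{sp}(n)$, which is irreducible. For $\Lambda(T)=T^{[4]}\Theta$, the operator $\Lambda^*\Lambda$ is a symmetric intertwiner, hence a scalar. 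The problem is the evaluation that fixes the constant, which is the step you identified as the main obstacle.

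For your test element $T$ ($+1$ on $e_1,Ie_1$, $-1$ on $Je_1,Ke_1$), you have the weights reversed.

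\begin{itemize}
\item The $J$ and $K$ terms have weight $0$, not $\pm1$. Every basis $4$-form in $\eta_{J,1}\wedge\eta_{J,s}$ or $\eta_{K,1}\wedge\eta_{K,s}$ contains one of the pairs $\{e_1,Je_1\}$, $\{Ie_1,Ke_1\}$, $\{e_1,Ke_1\}$, $\{Ie_1,Je_1\}$. So it has exactly one vector from each of $\{e_1,Ie_1\}$ and $\{Je_1,Ke_1\}$, giving weight $1-1=0$. Conceptually, $T^{[2]}\omega_J=T^{[2]}\omega_K=0$ because $T$ anticommutes with $J$ and $K$, so only the $\omega_I^2$ part of $\Theta$ sees $T$.
\item The $\eta_{I,1}\wedge\eta_{I,s}$ terms are \emph{not} annihilated. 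The factor $e_1^*\wedge(Ie_1)^*$ has weight $+2$ and $(Je_1)^*\wedge(Ke_1)^*$ has weight $-2$.
\end{itemize}

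The correct count is $n-1$ values of $s$, four basis terms each, squared weight $4$ and squared coefficient $1/9$. This gives $16(n-1)/9=\frac{4(n-1)}{9}|T|^2$.

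With the weights you actually state ($8(n-1)$ terms of squared weight $1$), the same bookkeeping gives $8(n-1)/9$. That would produce the wrong coefficient $\frac{1}{24n(2n+1)}$. Your displayed value $16(n-1)/9$ is right, but it does not follow from the reasoning you give.

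With the corrected count, which is the case $\mu_1=1$, $\mu_r=0$ ($r\ge2$) of the paper's computation, your proof is complete. Your fallback of computing for a general element in an adapted basis is exactly the paper's route, and it needs the same corrected weights.
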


\begin{proof}
Since
\[
        q_S^{(4)}=(\tr_\RR S)S^{[4]}-(S^{[4]})^2,
\]
the map
\[
        S\longmapsto -\inner{q_S^{(4)}\Theta}{\Theta}
\]
is a quadratic form on $\Sym_\RR(V)$.  By invariance under the
$Sp(n)Sp(1)$-symmetry preserving $\Theta$, the polarized form has no mixed
terms among
\[
        \Herm_0(\HH^n),
        \qquad
        \mathcal S_I,
        \qquad
        \mathcal S_J,
        \qquad
        \mathcal S_K.
\]
The mixed term between $\tau\Id$ and a trace-free component $R$ is proportional
to $\inner{R^{[4]}\Theta}{\Theta}$, which is zero by
Lemma~\ref{lem:quaternionic-induced-action}.  Therefore
\begin{align*}
        -\inner{q_S^{(4)}\Theta}{\Theta}
        &=
        -\inner{q_H^{(4)}\Theta}{\Theta}
        -\inner{q_{\tau\Id}^{(4)}\Theta}{\Theta}                         \\
        &\quad
        -\inner{q_{S_I}^{(4)}\Theta}{\Theta}
        -\inner{q_{S_J}^{(4)}\Theta}{\Theta}
        -\inner{q_{S_K}^{(4)}\Theta}{\Theta} .
\end{align*}

For $S=\tau\Id$, one has
\[
        q_{\tau\Id}^{(4)}\Theta=16(n-1)\tau^2\Theta,
\]
and hence
\[
        -\frac{\inner{q_{\tau\Id}^{(4)}\Theta}{\Theta}}
        {4(4n-4)|\Theta|^2}
        =-\tau^2 .
\]

Next suppose $H\in\Herm_0(\HH^n)$.  Choose a quaternionic orthonormal basis
with $He_i=\lambda_i e_i$ and $\sum_i\lambda_i=0$.  Then
$\tr_\HH(H^2)=\sum_i\lambda_i^2$, and, with $m_i(A)$ as in
Lemma~\ref{lem:theta-counts},
\[
        H^{[4]}E^A
        =
        \left(\sum_i\lambda_i m_i(A)\right)E^A.
\]
Since $\tr_\RR H=0$, symmetry and $\sum_i\lambda_i=0$ give
\begin{align*}
        -\inner{q_H^{(4)}\Theta}{\Theta}
        &=
        \sum_A c_A^2
        \left(\sum_i\lambda_i m_i(A)\right)^2                         \\
        &=
        \sum_A c_A^2 m_1(A)^2\sum_i\lambda_i^2
        +2\sum_A c_A^2 m_1(A)m_2(A)\sum_{i<j}\lambda_i\lambda_j       \\
        &=
        \left(
        \sum_A c_A^2 m_1(A)^2
        -
        \sum_A c_A^2 m_1(A)m_2(A)
        \right)
        \sum_i\lambda_i^2                                             \\
        &=
        \frac{16(n+1)}{n(2n+1)}|\Theta|^2\tr_\HH(H^2).
\end{align*}
Thus
\[
        -\frac{\inner{q_H^{(4)}\Theta}{\Theta}}
        {4(4n-4)|\Theta|^2}
        =
        \frac{n+1}{n(n-1)(2n+1)}\tr_\HH(H^2).
\]

It remains to compute one of the three equivalent components.  Let
$T\in\mathcal S_I$.  We may choose a quaternionic orthonormal basis such that
\[
        Te_i=\mu_i e_i,
        \qquad
        T(Ie_i)=\mu_i Ie_i,
        \qquad
        T(Je_i)=-\mu_i Je_i,
        \qquad
        T(Ke_i)=-\mu_i Ke_i.
\]
Then $\tr_\RR T=0$ and $|T|^2=4\sum_i\mu_i^2$.  In the expansion of $\Theta$,
only the terms $\eta_{I,r}\wedge\eta_{I,s}$ contribute.  For fixed $r<s$, the
four eigenvalue sums are
\[
        2\mu_r+2\mu_s,
        \qquad
        2\mu_r-2\mu_s,
        \qquad
        -2\mu_r+2\mu_s,
        \qquad
        -2\mu_r-2\mu_s.
\]
The sum of their squares is $16(\mu_r^2+\mu_s^2)$.  Since all four coefficients
have square $1/9$,
\[
        -\inner{q_T^{(4)}\Theta}{\Theta}
        =
        \frac19\sum_{r<s}16(\mu_r^2+\mu_s^2)
        =
        \frac{4(n-1)}9|T|^2 .
\]
Using $|\Theta|^2=n(2n+1)/3$, we get
\[
        -\frac{\inner{q_T^{(4)}\Theta}{\Theta}}
        {4(4n-4)|\Theta|^2}
        =
        \frac{|T|^2}{12n(2n+1)}.
\]
The same computation applies to $\mathcal S_J$ and $\mathcal S_K$.  Combining
the five terms gives the stated identity.
\end{proof}

\begin{lemma}\label{lem:quaternionic-projection-bound}
Let $S_\alpha\in\Sym_\RR(\HH^n)$, and write $S_\alpha=H_\alpha+\tau_\alpha\Id+(S_\alpha)_I+(S_\alpha)_J+(S_\alpha)_K$. Then
\begin{equation}\label{eq:quaternionic-projection-bound}
\begin{aligned}
        &\sum_\alpha\tr_\HH(H_\alpha^2)
        +\frac14\sum_\alpha\sum_{\phi\in\{I,J,K\}}|(S_\alpha)_\phi|^2
        +n(2n+1)\sum_\alpha\tau_\alpha^2                         \\
        &\le
        n(2n+1)
        \sup_{|q|=1}\sum_\alpha\inner{S_\alpha q}{q}^2 .
\end{aligned}
\end{equation}
If equality holds, then
\[
        \sum_\alpha\inner{S_\alpha q}{q}^2
        =
        \sup_{|u|=1}\sum_\alpha\inner{S_\alpha u}{u}^2
\]
for every $|q|=1$.
\end{lemma}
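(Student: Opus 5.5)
The plan is to integrate directly over the unit sphere of the real Euclidean space $V\cong\RR^{4n}$, without the circle-averaging step used in Lemma~\ref{lem:complex-projection-bound}. The reason is that the left-hand side of \eqref{eq:quaternionic-projection-bound} should turn out to be exactly $n(2n+1)$ times the spherical average of $\sum_\alpha\inner{S_\alpha q}{q}^2$. Set $C=\sup_{|q|=1}\sum_\alpha\inner{S_\alpha q}{q}^2$, and let $d\sigma$ be the normalized measure on $S^{4n-1}\subset V$. Since $\sum_\alpha\inner{S_\alpha q}{q}^2\le C$ pointwise, integrating gives
\[
\int_{S^{4n-1}}\sum_\alpha\inner{S_\alpha q}{q}^2\,d\sigma\le C .
\]

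Next I compute the average of a single term. For $S\in\Sym_\RR(V)$ with $N=4n$, diagonalize $S$ by a real orthogonal change of coordinates, which preserves $d\sigma$. The standard real moments
\[
\int x_i^4\,d\sigma=\frac{3}{N(N+2)},\qquad \int x_i^2x_j^2\,d\sigma=\frac{1}{N(N+2)}\quad(i\ne j)
\]
then give
\[
\int\inner{Sq}{q}^2\,d\sigma=\frac{2|S|^2+(\tr_\RR S)^2}{N(N+2)}=\frac{2|S|^2+(\tr_\RR S)^2}{8n(2n+1)} .
\]

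I then insert the orthogonal decomposition $S=H+\tau\Id+S_I+S_J+S_K$. Since the summands are mutually orthogonal, the norm splits as
\[
|S|^2=|H|^2+4n\tau^2+\sum_\phi|S_\phi|^2 .
\]
For $H$ quaternion-Hermitian, diagonalize it in a quaternionic orthonormal basis. Each quaternionic eigenvalue $\lambda_i$ then occurs with real multiplicity four, so $|H|^2=4\tr_\HH(H^2)$. Also $\tr_\RR S=4n\tau$, because $\tr_\RR H=0$ and $\tr_\RR S_\phi=0$; the latter holds since $S_\phi$ anticommutes with an orthogonal complex structure. Substituting,
\[
\int\inner{Sq}{q}^2\,d\sigma=\frac{8\tr_\HH(H^2)+8n\tau^2+2\sum_\phi|S_\phi|^2+16n^2\tau^2}{8n(2n+1)} .
\]
This equals
\[
\frac{\tr_\HH(H^2)+\frac14\sum_\phi|S_\phi|^2+n(2n+1)\tau^2}{n(2n+1)} .
\]
Summing over $\alpha$ and comparing with the integrated bound proves \eqref{eq:quaternionic-projection-bound}.

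For the equality case, equality means the average of the continuous function $q\mapsto\sum_\alpha\inner{S_\alpha q}{q}^2$ equals its supremum $C$. Since this function is at most $C$ everywhere and $d\sigma$ has full support, it must equal $C$ identically on the sphere. That is exactly the asserted condition.

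The only step needing care is the bookkeeping of norms: checking the factor $4$ relating $|H|^2$ to $\tr_\HH(H^2)$, and confirming that the weight $\tfrac14$ on $|S_\phi|^2$ comes out exactly. The averaging argument itself is routine.
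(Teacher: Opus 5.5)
Your proposal is correct and takes essentially the same route as the paper. Both average $\sum_\alpha\inner{S_\alpha q}{q}^2$ over the normalized sphere $S^{4n-1}$ using the real fourth moments, split the result through the orthogonal decomposition with $|H|^2=4\tr_\HH(H^2)$ and $\tr_\RR S=4n\tau$, and get the equality case because a continuous function bounded by its supremum has average equal to that supremum only if it is constant; the differences are cosmetic, since the paper derives the moments from Gaussian integrals and kills mixed terms in $\frac{\tr_\RR R\,\tr_\RR T+2\tr_\RR(RT)}{4n(4n+2)}$, while you quote the moments and split $|S|^2$ directly.
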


\begin{proof}
Let $d\sigma$ be the normalized measure on $S^{4n-1}\subset V$ and $d\omega$ be the Euclidean surface measure on $S^{4n-1}$.  By polar
coordinates,
\begin{align*}
        \left(\int_0^\infty r^{4n+3}e^{-r^2/2}\,dr\right)
        \left(\int_{S^{4n-1}}q_iq_jq_kq_l\,d\omega\right) 
        & =
        \int_{\RR^{4n}}x_ix_jx_kx_l e^{-|x|^2/2}\,dx \\
        & =
        (2\pi)^{2n}
        \begin{cases}
        3, & i=j=k=l,\\
        1, & i=j,\ k=l,\ i\ne k,\\
        1, & i=k,\ j=l,\ i\ne j,\\
        1, & i=l,\ j=k,\ i\ne j,\\
        0, & \text{otherwise}
        \end{cases} \\
        & =
        (2\pi)^{2n}
        \left(
        \delta_{ij}\delta_{kl}
        +
        \delta_{ik}\delta_{jl}
        +
        \delta_{il}\delta_{jk}
        \right).
\end{align*}
Here the third equality follows from the independence of the Gaussian
coordinates and the one-dimensional moments
$\frac{1}{\sqrt{2\pi}}\int_\RR t^2e^{-t^2/2}\,dt=1$ and
$\frac{1}{\sqrt{2\pi}}\int_\RR t^4e^{-t^2/2}\,dt=3$.  Moreover,
\begin{align*}
        |S^{4n-1}|
        \int_0^\infty r^{4n-1}e^{-r^2/2}\,dr
        &=
        (2\pi)^{2n},
        &
        \frac{
        \int_0^\infty r^{4n+3}e^{-r^2/2}\,dr
        }{
        \int_0^\infty r^{4n-1}e^{-r^2/2}\,dr
        }
        &=
        4n(4n+2).
\end{align*}
Since $d\sigma=|S^{4n-1}|^{-1}d\omega$, we obtain
\[
        \int_{S^{4n-1}} q_iq_jq_kq_l\,d\sigma
        =
        \frac{
        \delta_{ij}\delta_{kl}
        +
        \delta_{ik}\delta_{jl}
        +
        \delta_{il}\delta_{jk}}
        {4n(4n+2)}.
\]
Consequently,  for $R,T\in\Sym_\RR(V)$,
\begin{equation}\label{eq:spherical-fourth-moment-quaternionic}
        \int_{S^{4n-1}}
        \inner{Rq}{q}\inner{Tq}{q}\,d\sigma
        =
        \frac{\tr_\RR R\,\tr_\RR T+2\tr_\RR(RT)}{4n(4n+2)} .
\end{equation}
The decomposition of $\Sym_\RR(V)$ used above is orthogonal, and all summands
except $\RR\Id$ are trace-free.  Hence the mixed terms vanish after
integrating.  Thus
\[
        \int_{S^{4n-1}}
        \inner{(H_\alpha+\tau_\alpha\Id)q}{q}^2\,d\sigma
        =
        \frac{\tr_\HH(H_\alpha^2)+n(2n+1)\tau_\alpha^2}{n(2n+1)}.
\]
For each $\phi\in\{I,J,K\}$,
\[
        \int_{S^{4n-1}}
        \inner{(S_\alpha)_\phi q}{q}^2\,d\sigma
        =
        \frac{|(S_\alpha)_\phi|^2}{4n(2n+1)}.
\]
Therefore
\begin{align*}
        &\sup_{|q|=1}\sum_\alpha\inner{S_\alpha q}{q}^2   \\              
        &\ge
        \int_{S^{4n-1}}
        \sum_\alpha\inner{S_\alpha q}{q}^2\,d\sigma                     \\
        &=
        \frac{1}{n(2n+1)}
        \sum_\alpha
        \left(
        \tr_\HH(H_\alpha^2)+n(2n+1)\tau_\alpha^2
        \right)                                                  +
        \frac{1}{4n(2n+1)}
        \sum_\alpha\sum_{\phi\in\{I,J,K\}}|(S_\alpha)_\phi|^2 .
\end{align*}
Multiplying by $n(2n+1)$ gives \eqref{eq:quaternionic-projection-bound}.

If equality holds in \eqref{eq:quaternionic-projection-bound}, then the
continuous function
\[
        q\longmapsto \sum_\alpha\inner{S_\alpha q}{q}^2
\]
has average equal to its supremum on the unit sphere.  Hence it is identically
equal to its supremum.
\end{proof}

\begin{proposition}[Quaternionic projective Bochner algebra]
\label{prop:quaternionic-projective-Bochner-algebra}
Let $S_\alpha\in\Sym_\RR(\HH^n)$, and write
\[
        S_\alpha
        =
        H_\alpha+\tau_\alpha\Id
        +(S_\alpha)_I+(S_\alpha)_J+(S_\alpha)_K,
        \qquad
        \tau_\alpha=\frac{1}{4n}\tr_\RR S_\alpha .
\]
Then
\begin{align}
        -\frac{\inner{\sum_\alpha q_{S_\alpha}^{(4)}\Theta}{\Theta}}
        {4(4n-4)|\Theta|^2}
        \le
        \frac{n+1}{n-1}
        \sup_{|q|=1}
        \sum_\alpha\inner{S_\alpha q}{q}^2
        -
        \frac{2n}{n-1}\sum_\alpha\tau_\alpha^2 .
\label{eq:quaternionic-projective-Bochner-algebra}
\end{align}
If equality holds, then equality holds in
Lemma~\ref{lem:quaternionic-projection-bound}, and
\[
        (S_\alpha)_I=(S_\alpha)_J=(S_\alpha)_K=0
        \qquad
        \text{for every }\alpha .
\]
\end{proposition}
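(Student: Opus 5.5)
We follow the proof of Proposition~\ref{prop:complex-projective-Bochner-algebra}. We sum Lemma~\ref{lem:quaternionic-scalar-identity} over $\alpha$, writing $\Sigma_H=\sum_\alpha\tr_\HH(H_\alpha^2)$, $\Sigma_\phi=\sum_\alpha\sum_\phi|(S_\alpha)_\phi|^2$ and $\Sigma_\tau=\sum_\alpha\tau_\alpha^2$. This gives
\[
-\frac{\inner{\sum_\alpha q_{S_\alpha}^{(4)}\Theta}{\Theta}}{4(4n-4)|\Theta|^2}
=\frac{n+1}{n(n-1)(2n+1)}\Sigma_H-\Sigma_\tau+\frac{1}{12n(2n+1)}\Sigma_\phi .
\]
We then multiply Lemma~\ref{lem:quaternionic-projection-bound} by $\frac{n+1}{n(n-1)(2n+1)}$. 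With $C=\sup_{|q|=1}\sum_\alpha\inner{S_\alpha q}{q}^2$, this yields
\[
\frac{n+1}{n(n-1)(2n+1)}\Sigma_H\le \frac{n+1}{n-1}C-\frac{n+1}{n-1}\Sigma_\tau-\frac{n+1}{4n(n-1)(2n+1)}\Sigma_\phi .
\]

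Substituting the second display into the first, the $\tau$-terms combine to $-\bigl(\frac{n+1}{n-1}+1\bigr)\Sigma_\tau=-\frac{2n}{n-1}\Sigma_\tau$, exactly as in the complex case. The $\Sigma_\phi$ terms combine with total coefficient
\[
\frac{1}{12n(2n+1)}-\frac{n+1}{4n(n-1)(2n+1)}
=\frac{(n-1)-3(n+1)}{12n(n-1)(2n+1)}
=-\frac{2n+4}{12n(n-1)(2n+1)}<0 .
\]
Dropping this nonpositive term gives \eqref{eq:quaternionic-projective-Bochner-algebra}.

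For the equality case: if equality holds in \eqref{eq:quaternionic-projective-Bochner-algebra}, both inequalities used must be equalities. The first is Lemma~\ref{lem:quaternionic-projection-bound} (multiplied by a positive factor, since $n\ge2$), so equality holds there. The second is the omission of the term with strictly negative coefficient, so $\Sigma_\phi=0$, which forces $(S_\alpha)_I=(S_\alpha)_J=(S_\alpha)_K=0$ for every $\alpha$.

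There is no real obstacle here. The only point to check is the sign of the combined $\Sigma_\phi$ coefficient. It is negative because the Bochner weight $\frac{1}{12n(2n+1)}$ on the $\mathcal S_\phi$ components is smaller than the weight $\frac{n+1}{4n(n-1)(2n+1)}$ that Lemma~\ref{lem:quaternionic-projection-bound} contributes there. This gap is also what makes the vanishing of the $\mathcal S_\phi$ parts in the equality case automatic, whereas in the complex case the vanishing $K_\alpha=0$ came instead from the equality clause of Lemma~\ref{lem:complex-projection-bound}.
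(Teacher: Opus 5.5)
Your proposal is correct and is essentially the paper's argument. The paper also writes the left-hand side via Lemma~\ref{lem:quaternionic-scalar-identity} as $\frac{n+1}{n(n-1)(2n+1)}$ times the left side of Lemma~\ref{lem:quaternionic-projection-bound}, minus $\frac{2n}{n-1}\sum_\alpha\tau_\alpha^2$, minus the residual $\frac{n+2}{6n(n-1)(2n+1)}\sum_\alpha\sum_{\phi}|(S_\alpha)_\phi|^2$, which is exactly your combined coefficient. It then drops this residual and derives both the equality in the lemma and the vanishing of the $\mathcal S_\phi$-components from its strict sign, just as you do.
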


\begin{proof}
By Lemma~\ref{lem:quaternionic-scalar-identity},
\begin{align*}
        &-\frac{\inner{\sum_\alpha q_{S_\alpha}^{(4)}\Theta}{\Theta}}
        {4(4n-4)|\Theta|^2}                                             \\
        &\quad=
        \frac{n+1}{n(n-1)(2n+1)}
        \left(
        \sum_\alpha\tr_\HH(H_\alpha^2)
        +\frac14\sum_\alpha\sum_{\phi\in\{I,J,K\}}|(S_\alpha)_\phi|^2
        +n(2n+1)\sum_\alpha\tau_\alpha^2
        \right)                                                          \\
        &\qquad-
        \frac{2n}{n-1}\sum_\alpha\tau_\alpha^2
        -
        \frac{n+2}{6n(n-1)(2n+1)}
        \sum_\alpha\sum_{\phi\in\{I,J,K\}}|(S_\alpha)_\phi|^2 .
\end{align*}
Lemma~\ref{lem:quaternionic-projection-bound} gives
\begin{align*}
        &\sum_\alpha\tr_\HH(H_\alpha^2)
        +\frac14\sum_\alpha\sum_{\phi\in\{I,J,K\}}|(S_\alpha)_\phi|^2
        +n(2n+1)\sum_\alpha\tau_\alpha^2                         \\
        &\le
        n(2n+1)
        \sup_{|q|=1}\sum_\alpha\inner{S_\alpha q}{q}^2 .
\end{align*}
Hence
\begin{align*}
        &-\frac{\inner{\sum_\alpha q_{S_\alpha}^{(4)}\Theta}{\Theta}}
        {4(4n-4)|\Theta|^2}                                             \\
        &\quad\le
        \frac{n+1}{n-1}
        \sup_{|q|=1}\sum_\alpha\inner{S_\alpha q}{q}^2
        -
        \frac{2n}{n-1}\sum_\alpha\tau_\alpha^2                          \\
        &\qquad-
        \frac{n+2}{6n(n-1)(2n+1)}
        \sum_\alpha\sum_{\phi\in\{I,J,K\}}|(S_\alpha)_\phi|^2             \\
        &\quad\le
        \frac{n+1}{n-1}
        \sup_{|q|=1}\sum_\alpha\inner{S_\alpha q}{q}^2
        -
        \frac{2n}{n-1}\sum_\alpha\tau_\alpha^2 .
\end{align*}
This proves \eqref{eq:quaternionic-projective-Bochner-algebra}.

If equality holds in \eqref{eq:quaternionic-projective-Bochner-algebra}, then
\begin{align*}
        0
        &=
        \frac{n+1}{n(n-1)(2n+1)}
        \left(
        n(2n+1)
        \sup_{|q|=1}\sum_\alpha\inner{S_\alpha q}{q}^2                  
        \right.                                                           \\
        &\qquad\left.
        -\sum_\alpha\tr_\HH(H_\alpha^2)
        -\frac14\sum_\alpha\sum_{\phi\in\{I,J,K\}}|(S_\alpha)_\phi|^2
        -n(2n+1)\sum_\alpha\tau_\alpha^2
        \right)                                                          \\
        &\quad+
        \frac{n+2}{6n(n-1)(2n+1)}
        \sum_\alpha\sum_{\phi\in\{I,J,K\}}|(S_\alpha)_\phi|^2 .
\end{align*}
Both terms on the right are nonnegative.  Hence equality holds in
Lemma~\ref{lem:quaternionic-projection-bound}, and
\[
        \sum_\alpha\sum_{\phi\in\{I,J,K\}}|(S_\alpha)_\phi|^2=0.
\]
Therefore
\[
        (S_\alpha)_I=(S_\alpha)_J=(S_\alpha)_K=0
        \qquad
        \text{for every }\alpha .
\]
The final assertion of Lemma~\ref{lem:quaternionic-projection-bound} then gives
\[
        \sum_\alpha\inner{S_\alpha q}{q}^2
        =
        \sup_{|u|=1}\sum_\alpha\inner{S_\alpha u}{u}^2
\]
for every unit $q\in\HH^n$.
\end{proof}

%============================================================
\section{Proof of the estimate in Theorem \ref{thm:main-ball-FPn}}
\label{sec:proof-estimate-main}
%============================================================

We treat the two cases simultaneously. Set $(m,p,\Psi,\mathbb F)=(2n,2,\omega,\CC)$ in
the almost Hermitian case and $(m,p,\Psi,\mathbb F)=(4n,4,\Theta,\HH)$ in the
almost quaternion-Hermitian case.

Since $\Psi$ is harmonic, the Bochner formula and the Euclidean Gauss equation
give
\begin{align*}
        0
        &=
        \int_\Sigma |\nabla\Psi|^2\,d\mu
        +
        \int_\Sigma
        \left\langle
        \sum_\alpha q_{A_\alpha}^{(p)}\Psi,\Psi
        \right\rangle\,d\mu .
\end{align*}
Equivalently,
\begin{align}
        \int_\Sigma
        \left(
        -\frac{
        \left\langle
        \sum_\alpha q_{A_\alpha}^{(p)}\Psi,\Psi
        \right\rangle}
        {p(m-p)|\Psi|^2}
        \right)\,d\mu
        =
        \int_\Sigma
        \frac{|\nabla\Psi|^2}{p(m-p)|\Psi|^2}\,d\mu
        \ge 0 .
\label{eq:estimate-harmonic-Bochner}
\end{align}

We apply the algebraic estimate pointwise with $S_\alpha=A_\alpha$.  Since
$\tau_\alpha=m^{-1}\tr A_\alpha$, Proposition
\ref{prop:complex-projective-Bochner-algebra} in the complex case and
Proposition \ref{prop:quaternionic-projective-Bochner-algebra} in the
quaternionic case give
\begin{align}
        -\frac{
        \left\langle
        \sum_\alpha q_{A_\alpha}^{(p)}\Psi,\Psi
        \right\rangle}
        {p(m-p)|\Psi|^2}
        \le
        \frac{n+1}{n-1}\sup_{|v|=1}|A(v,v)|^2
        -
        \frac{2n}{n-1}\frac{1}{m^2}|\tr A|^2 .
\label{eq:estimate-pointwise-Bochner-algebra}
\end{align}
Combining \eqref{eq:estimate-harmonic-Bochner} and
\eqref{eq:estimate-pointwise-Bochner-algebra}, we obtain
\begin{align}
        \int_\Sigma
        \sup_{|v|=1}|A(v,v)|^2\,d\mu
        \ge
        \frac{2n}{n+1}
        \int_\Sigma
        \frac{1}{m^2}|\tr A|^2\,d\mu .
\label{eq:estimate-supA-trace-bound}
\end{align}

It remains to estimate the trace term.  Since
$\operatorname{div}_\Sigma F^T=m+\tr A_{F^\perp}$ and $\Sigma$ is closed, we have the Minkowski identity
\[
        \int_\Sigma \tr A_{F^\perp}\,d\mu=-m\Vol(\Sigma).
\]
Therefore, by Cauchy's inequality and $|F^\perp|\le |F|\le1$,
\begin{equation}\label{eq:estimate-trace-volume-bound}
\begin{aligned}
        \Vol(\Sigma)^2
        &=
        \left(
        \int_\Sigma
        \frac{1}{m}\tr A_{F^\perp}\,d\mu
        \right)^2                                                     \\
        &\le
        \left(
        \int_\Sigma
        \frac{1}{m^2}|\tr A|^2\,d\mu
        \right)
        \left(
        \int_\Sigma |F^\perp|^2\,d\mu
        \right)                                                       \\
        &\le
        \Vol(\Sigma)
        \int_\Sigma
        \frac{1}{m^2}|\tr A|^2\,d\mu .
\end{aligned}
\end{equation}
Hence
\begin{align}
        \int_\Sigma
        \frac{1}{m^2}|\tr A|^2\,d\mu
        \ge
        \Vol(\Sigma).
\label{eq:estimate-trace-volume-lower-bound}
\end{align}
By the definition of $\kappa(F)$,
\eqref{eq:estimate-supA-trace-bound}, and
\eqref{eq:estimate-trace-volume-lower-bound},
\begin{align*}
        \kappa(F)^2\Vol(\Sigma)
        &\ge
        \int_\Sigma
        \sup_{|v|=1}|A(v,v)|^2\,d\mu                                  \\
        &\ge
        \frac{2n}{n+1}
        \int_\Sigma
        \frac{1}{m^2}|\tr A|^2\,d\mu                                  \\
        &\ge
        \frac{2n}{n+1}\Vol(\Sigma).
\end{align*}
Thus
\[
        \kappa(F)^2\ge\frac{2n}{n+1}.
\]
This proves the estimate in Theorem \ref{thm:main-ball-FPn}.
%============================================================
\section{Proof of the rigidity in Theorem \ref{thm:main-ball-FPn}}
\label{sec:rigidity}
%============================================================

We prove the equality case.  Set $(m,p,\Psi,\mathbb F)=(2n,2,\omega,\CC)$ in
the almost Hermitian case and $(m,p,\Psi,\mathbb F)=(4n,4,\Theta,\HH)$ in the
almost quaternion-Hermitian case.  Equality in the estimate gives equality in
\eqref{eq:estimate-harmonic-Bochner}, hence
\[
        \int_\Sigma
        \frac{|\nabla\Psi|^2}{p(m-p)|\Psi|^2}\,d\mu=0 .
\]
Since $|\omega|^2=n$ and $|\Theta|^2=n(2n+1)/3$ pointwise, by
Lemmas~\ref{lem:complex-induced-action} and \ref{lem:theta-counts}, we get
$\nabla\Psi=0$.  Thus $(\Sigma,g,J)$ is K\"ahler in the complex case, and
$(\Sigma,g,\calQ)$ is quaternionic-K\"ahler in the quaternionic case.

Equality in \eqref{eq:estimate-trace-volume-bound} gives
\[
        |F^\perp|=|F|=1
        \qquad\text{on }\Sigma .
\label{eq:rigidity-image-boundary}
\]
Hence $F^T=0$ and $F(\Sigma)\subset S(1)$. Since $F$ is an isometric immersion, we simply identify $T_x\Sigma$ with $dF_x(T_x\Sigma)$.  We write
$N^{\RR}\Sigma=\RR F\oplus N^S\Sigma$, where $N^S\Sigma$ is the spherical
normal bundle of $F:\Sigma\looparrowright S(1)$.  Equality in Cauchy's
inequality also gives
\[
        \tr A=-mF .
\]
If $A^S$ denotes the second fundamental form of
$F:\Sigma\looparrowright S(1)$, then
\[
        A=A^S-gF .
\]
Therefore $\tr A^S=0$.

\begin{lemma}\label{lem:equality-spherical-shape-package}
We have $A^S_\eta\in \Herm_0(T_x\Sigma)$ for every $\eta\in N^S_x\Sigma$. Moreover,
\[
		|A^S(v,v)|^2 = \frac{n-1}{n+1}
\]
for every unit $v\in T_x\Sigma$.
\end{lemma}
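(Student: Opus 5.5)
In the equality case every inequality in the chain of Section~\ref{sec:proof-estimate-main} must be an equality, and we read off the lemma from the equality conditions of the pointwise algebra. The chain consists of \eqref{eq:estimate-harmonic-Bochner}, \eqref{eq:estimate-pointwise-Bochner-algebra}, the bound $\int\sup|A(v,v)|^2\le\kappa^2\Vol$, and \eqref{eq:estimate-trace-volume-bound}.

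\textbf{Step 1: equality pointwise.} Since $\kappa(F)^2=2n/(n+1)$ and all the integrated inequalities are equalities, the continuous integrands agree everywhere. This gives, at every $x$:
\begin{itemize}
\item[(a)] $\sup_{|v|=1}|A(v,v)|^2=\kappa^2=2n/(n+1)$;
\item[(b)] equality in \eqref{eq:estimate-pointwise-Bochner-algebra}, i.e.\ in Proposition~\ref{prop:complex-projective-Bochner-algebra}, respectively Proposition~\ref{prop:quaternionic-projective-Bochner-algebra}, with $S_\alpha=A_\alpha$ in an orthonormal normal frame.
\end{itemize}
Strictly, the equality in \eqref{eq:estimate-pointwise-Bochner-algebra} follows because the difference of the two sides is a nonnegative continuous function with zero integral. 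The other inequalities in the chain are handled the same way.

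\textbf{Step 2: apply the equality clauses of the algebra.} By Lemma~\ref{lem:complex-projection-bound} (complex case), respectively Proposition~\ref{prop:quaternionic-projective-Bochner-algebra} together with Lemma~\ref{lem:quaternionic-projection-bound} (quaternionic case), the anti-Hermitian parts vanish: $K_\alpha=0$, respectively $(A_\alpha)_I=(A_\alpha)_J=(A_\alpha)_K=0$. Moreover,
\[
v\mapsto\sum_\alpha\inner{A_\alpha v}{v}^2=|A(v,v)|^2
\]
is constant on the unit sphere, with value $2n/(n+1)$ by (a). Thus every $A_\alpha=H_\alpha+\tau_\alpha\Id$ is $\mathbb F$-Hermitian.

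\textbf{Step 3: pass to the spherical normal bundle.} Choose the normal frame as $F$ together with an orthonormal frame $\eta$ of $N^S\Sigma$. Then $A_F=-\Id$, and $A_\eta=A^S_\eta$ for $\eta\in N^S_x\Sigma$. The Hermitian property is linear in the normal vector, so $A^S_\eta$ is $\mathbb F$-Hermitian; since $\tr A^S=0$, it lies in $\Herm_0$. Finally, by \eqref{eq:model-Euclidean-spherical-norm}-type reasoning, namely $A=A^S-gF$ with $F\perp A^S$, we get
\[
|A(v,v)|^2=|A^S(v,v)|^2+1 .
\]
Hence $|A^S(v,v)|^2=2n/(n+1)-1=(n-1)/(n+1)$.

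\textbf{Main obstacle.} The delicate point is justifying Step 1. The equality in $\int\sup|A|^2\le\kappa^2\Vol$ and the equality in the algebra must both be upgraded to pointwise statements. This requires continuity of $x\mapsto\sup_v|A_x(v,v)|^2$ (a sup of a continuous function over a compact fibre) and of the difference of the two sides of \eqref{eq:estimate-pointwise-Bochner-algebra}. Once these hold, the argument is the standard fact that a nonnegative continuous function with zero integral vanishes identically.
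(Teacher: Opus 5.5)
Your proof is correct and follows the same route as the paper: equality in Proposition~\ref{prop:complex-projective-Bochner-algebra}, respectively Proposition~\ref{prop:quaternionic-projective-Bochner-algebra}, makes the Euclidean shape operators $\mathbb F$-Hermitian with $\sum_\alpha\langle A_\alpha v,v\rangle^2=2n/(n+1)$, and splitting off the radial direction ($A_F=-\Id$) together with $\tr A^S=0$ gives the claim. You also make explicit the continuity argument that upgrades the integral equalities to pointwise ones, including $\sup_v|A_x(v,v)|^2=\kappa(F)^2$ at every $x$, which is what fixes the constant; the paper leaves this step implicit.
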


\begin{proof}
	Choose an orthonormal basis $\nu_0,\ldots,\nu_{l-1}$ of $N_x\Sigma$ such that $\nu_0 = F$ and $\nu_\alpha = \eta_\alpha$ for $\alpha\geq 1$. Equality in Proposition \ref{prop:complex-projective-Bochner-algebra} in the Kähler case and Proposition \ref{prop:quaternionic-projective-Bochner-algebra} in the quaternionic-Kähler case give $(A_\alpha)_x\in \Herm(T_x\Sigma)$ and 
\[
	\sum_\alpha\langle A_{\alpha} v,v\rangle^2= \frac{2n}{n+1}
\]
for every unit $v\in T_x\Sigma$ and every $x\in\Sigma$. Since $\tr A^S = 0$, this implies $A^S_\eta\in \Herm_0(T_x\Sigma)$ for every $\eta\in N^S_x\Sigma$.  Moreover, $A_0 = -I$ and $A_\alpha = A^S_{\eta_\alpha}$ for $\alpha\geq 1$, hence the above identity implies
\[
	  \sum_{\alpha\geq 1}\langle A^S_{\eta_\alpha} v,v\rangle^2 =   \frac{2n}{n+1} - \langle -\Id v,v\rangle^2 =  \frac{n-1}{n+1},
\]
which implies the assertion.
\end{proof}

\begin{proposition}\label{lem:equality-intrinsic-space-form}
There is an isometry 
\[\varphi:(\Sigma,g)\to (\mathbb FP^n, g_{FS}),\qquad \mathbb F = \CC, \HH,\]
such that $d\varphi\circ J =J_{\CC P^n}\circ d\varphi$ in the K\"ahler case, and $        d\varphi\circ\mathcal Q\circ d\varphi^{-1}=\mathcal Q_{\HH P^n}$
in the quaternionic-K\"ahler case. Here $g_{FS}$ is the
standard projective metric normalized with constant holomorphic/quaternionic sectional curvature equals to $K = \frac{2n}{n+1}$.
	In particular, we have
\begin{equation}\label{eq:complex-curvature}
        R^\Sigma(X,Y)
        =
        \frac{K}{4}
        \bigl(
        X\wedge Y+JX\wedge JY
        +2\langle JX,Y\rangle J
        \bigr),
\end{equation}
in the K\"ahler case, and
\begin{equation}\label{eq:quaternionic-curvature}
        R^\Sigma(X,Y)
        =
        \frac{K}{4}
        \left(
        X\wedge Y
        +\sum_{\phi=I,J,K}
        \bigl(
        \phi X\wedge\phi Y
        +2\langle\phi X,Y\rangle\phi
        \bigr)
        \right)
\end{equation}
in the quaternionic-K\"ahler case. Here we use the convention $(X\wedge Y)Z=\langle X,Z\rangle Y-\langle Y,Z\rangle X$.
\end{proposition}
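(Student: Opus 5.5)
We derive the intrinsic curvature from the Gauss equation, using the equality information of Lemma~\ref{lem:equality-spherical-shape-package}, and then invoke the classical classification of complete simply connected (quaternionic-)K\"ahler manifolds of constant holomorphic (quaternionic) sectional curvature.

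\textbf{Step 1: the curvature is constant on each $\mathbb F$-line.} Since $F(\Sigma)\subset S(1)$, the Gauss equation for $F:\Sigma\looparrowright S(1)$ reads
\[
\langle R^\Sigma(X,Y)Y,X\rangle=|X|^2|Y|^2-\langle X,Y\rangle^2+\langle A^S(X,X),A^S(Y,Y)\rangle-|A^S(X,Y)|^2 .
\]
Take a unit vector $v$ and set $Y=Jv$ in the K\"ahler case, or $Y=\phi v$ with $\phi\in\calQ$ in the quaternionic case. Because every $A^S_\eta$ is Hermitian, we have $\langle A^S(\phi v,\phi v),\eta\rangle=\langle A^S(v,v),\eta\rangle$, hence $A^S(\phi v,\phi v)=A^S(v,v)$. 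Similarly $\langle A^S(v,\phi v),\eta\rangle=\langle A^S_\eta v,\phi v\rangle=0$, because $A^S_\eta$ commutes with $\phi$ and $\phi$ is skew. Lemma~\ref{lem:equality-spherical-shape-package} then gives
\[
\langle R^\Sigma(v,\phi v)\phi v,v\rangle=1+\frac{n-1}{n+1}=\frac{2n}{n+1}=K .
\]
So the holomorphic sectional curvature (resp. the sectional curvature of every plane inside an $\HH$-line) is the constant $K$. The structure is K\"ahler (resp. quaternionic-K\"ahler) by the parallelism $\nabla\Psi=0$ established above.

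\textbf{Step 2: the full curvature tensor.} In the K\"ahler case, a K\"ahler curvature tensor is determined by its holomorphic sectional curvatures, which gives \eqref{eq:complex-curvature} directly. In the quaternionic case we need more, because quaternionic sectional curvature controls only part of the tensor. We argue that the curvature, computed from the Gauss formula $R=\mathrm{Id}\text{-term}+\sum_\eta (A^S_\eta)\wedge(A^S_\eta)$-type expression, has a specific form. Write $\Phi(X,Y):=\sum_\alpha\langle A^S(X,X),A^S(Y,Y)\rangle-|A^S(X,Y)|^2$. The constraint $|A^S(v,v)|^2=\frac{n-1}{n+1}$ on all unit $v$, together with Hermitian symmetry, polarizes to give $\sum\langle A^S(X,Y),A^S(Z,W)\rangle$ in symmetrized form as a quadratic expression in $g$. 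Polarizing the quartic identity $|A^S(v,v)|^2=c|v|^4$ yields
\[
2|A^S(X,Y)|^2+\langle A^S(X,X),A^S(Y,Y)\rangle=c\,(|X|^2|Y|^2+2\langle X,Y\rangle^2).
\]
Combining this with the Hermitian invariance $A^S(\phi X,\phi Y)=A^S(X,Y)$ and with $\sum_\phi\langle A^S(X,\phi X),\cdot\rangle$ relations, we solve for the sectional curvature of an arbitrary plane $X\wedge Y$. This should give $\frac K4\bigl(1+3\sum_\phi\langle\phi X,Y\rangle^2\bigr)$ for orthonormal $X,Y$, which is exactly the sectional curvature of \eqref{eq:quaternionic-curvature}. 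Since a curvature tensor is determined by its sectional curvatures, \eqref{eq:quaternionic-curvature} follows. We expect this to be the main obstacle, since the naive polarization identity relates $|A^S(X,Y)|^2$ and $\langle A^S(X,X),A^S(Y,Y)\rangle$ only through one equation. The missing relation must come from the trace condition $\tr A^S_\eta=0$ combined with the $\phi$-invariance. Concretely, we would first expand $|A^S(X+\phi Y, X+\phi Y)|^2$ using $A^S(\phi Y,\phi Y)=A^S(Y,Y)$ and $A^S(X,\phi Y)=-A^S(\phi X,Y)$. Alternatively, in the quaternionic case with $n\ge2$ (and for dimension $\ge 12$ directly by Alekseevskii), we would use the known fact that a quaternionic-K\"ahler manifold is Einstein with curvature equal to $\nu R_{\HH P^n}+W$, where $W$ is of hyperk\"ahler type. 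We would then show that $W=0$ by the vanishing of the Ricci-type contraction of the quartic form.

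\textbf{Step 3: global identification.} Since $\Sigma$ is closed, it is complete, and its universal cover is a complete simply connected manifold with the curvature tensor of $(\mathbb FP^n,g_{FS})$ together with a parallel $J$ (resp. $\calQ$). By the Cartan--Ambrose--Hicks theorem, the universal cover is isometric to $\mathbb FP^n$ via a map intertwining $J$ (resp. $\calQ$), as the curvature formulas match under any linear isometry respecting the structure. Since $\mathbb FP^n$ is simply connected, the covering is trivial and $\Sigma$ itself is isometric to $\mathbb FP^n$. Indeed, any isometric quotient of $\mathbb FP^n$ is $\mathbb FP^n$ or, for $\CP^n$ with $n$ odd, a non-orientable quotient. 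The latter is excluded because $J$ gives an orientation, and in the quaternionic case all free isometries have fixed points. This produces the required $\varphi$.
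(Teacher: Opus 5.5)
Your Step 1 is the paper's computation, and the K\"ahler half of Step 2 (a K\"ahler curvature tensor is determined by its holomorphic sectional curvature, using $\nabla J=0$) is the standard fact the paper cites. The gap is the quaternionic half of Step 2. You never carry it out, and your main proposed route cannot work.

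At a point, all Gauss data are encoded in the operator $T=LL^*\ge0$ on $W_x=\Herm_0(T_x\Sigma;\HH)$, via $\langle A^S(X,Y),A^S(Z,U)\rangle=\langle T\pi(X\odot Y),\pi(Z\odot U)\rangle$. Here $\pi(X\odot Y)\in W_x$ represents $B\mapsto\langle BX,Y\rangle$, and $P_v=\pi(v\odot v)$.
\begin{itemize}
\item Hermitian invariance and $\tr A^S_\eta=0$ only say that $T$ acts on $W_x$.
\item The identity $|A^S(v,v)|^2=\frac{n-1}{n+1}$ only says that $\langle TP_v,P_v\rangle$ is constant.
\item Polarization then recovers the totally symmetric part of this $4$-tensor. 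It never recovers the curvature part $\langle A^S(X,X),A^S(Y,Y)\rangle-|A^S(X,Y)|^2$.
\end{itemize}
For $n\ge4$ there are nonzero self-adjoint $T'$ on $W_x$ with $\langle T'P_v,P_v\rangle\equiv0$. They form the $Sp(n)$-summand of $\Sym^2W_x$ isomorphic to the primitive part of $\Lambda^4\CC^{2n}$, which does not occur among $Sp(1)$-invariant quartics on $\HH^n$. Then $T=\lambda\Id+\varepsilon T'$ with $L=T^{1/2}$ satisfies every pointwise constraint you list, yet gives a different Gauss curvature. So no manipulation of $|A^S(X+\phi Y,X+\phi Y)|^2$, the trace condition, or $\phi$-invariance can yield $\frac K4\bigl(1+3\sum_\phi\langle\phi X,Y\rangle^2\bigr)$. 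Note that the paper proves $LL^*=\frac n{n+1}\Id$ only later, in Proposition~\ref{prop:rigidity-normal-form}, and uses the curvature formula to do it.

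The missing ingredient is that the quaternion-K\"ahler symmetries of $R^\Sigma$ come from $\nabla\Theta=0$, not from $A^S$. Contrary to your remark, for a curvature tensor of quaternion-K\"ahler type, constant quaternionic sectional curvature \emph{does} determine the whole tensor. This is Ishihara's theorem, which the paper simply invokes. Your ``alternative'' route is the right proof of it, but the step ``vanishing of the Ricci-type contraction'' must be made precise:
\begin{itemize}
\item Write $R^\Sigma=\nu R_0+W'$, where $R_0$ is the $\HH P^n$ tensor with quaternionic sectional curvature $1$ and $W'$ is of hyperk\"ahler type.
\item Then $W'$ is K\"ahler for $I$, and Step 1 says the sectional curvature of $W'$ on every plane $\{X,IX\}$ equals $K-\nu$.
\item Hence $W'$ is $(K-\nu)$ times the tensor of constant $I$-holomorphic curvature $1$.
\item Since $W'$ is Ricci-flat, $K=\nu$ and $W'=0$.
\end{itemize}

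A smaller point in Step 3: ``since $\mathbb FP^n$ is simply connected, the covering is trivial'' is a non sequitur. Your follow-up about free isometries of $\mathbb FP^n$ supplies the real reason. The paper instead uses orientability from $\omega^n$, resp.\ $\Theta^n$, together with Synge's theorem.
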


\begin{proof}
	Let $\phi$ be a compatible unit complex structure. In the K\"ahler case $\phi = J$. In the quaternionic-K\"ahler case $\phi \in\calQ=\Span\{I,J,K\}$. Let $\eta\in N^S_x\Sigma$. Since $A^S_\eta\in \Herm_0(T_x\Sigma)$ by Lemma \ref{lem:equality-spherical-shape-package}, it commutes with $\phi$. Hence, for every unit vector $v\in T_x\Sigma$, 
	\[	\langle A^S(v, \phi v),\eta\rangle = \langle A^S_\eta v, \phi v\rangle = 0\]
	and
	\[
		\langle A^S(\phi v, \phi v),\eta\rangle = \langle A^S_\eta \phi v, \phi v\rangle = \langle A^S_\eta v,  v\rangle = \langle A^S(v, v),\eta\rangle.
	\]
	Since this holds for every $\eta\in N^S_x\Sigma$, we obtain
	\begin{align*}
		A^S(v, \phi v) = 0,
		\qquad
		 A^S(\phi v, \phi v) =  A^S(v, v).
	\end{align*}
    Then the Gauss equation and Lemma \ref{lem:equality-spherical-shape-package} give
    \begin{equation*}
        \begin{aligned}
		R^\Sigma(v, \phi v, v, \phi v) &= 1 + \langle A^S(v, v), A^S(\phi v, \phi v)\rangle - |A^S(v, \phi v)|^2\\
		&= 1 + |A^S(v,v)|^2\\
		&= \frac{2n}{n+1},            
        \end{aligned}
    \end{equation*}
	for every $v\in T_x\Sigma$ and every such $\phi$.

    Therefore $(\Sigma, g)$ has constant positive holomorphic sectional curvature in the K\"ahler case and constant positive quaternionic sectional curvature in the quaternionic-K\"ahler case. Standard classification results imply that $(\Sigma,g)$ is locally isometric to the positive complex space form in the K\"ahler case, respectively to the positive quaternionic space form in the quaternionic-K\"ahler case, with constant holomorphic, respectively quaternionic, sectional curvature $K = \frac{2n}{n+1}$; see \cite{Nomizu1973} for the K\"ahler case and \cite{Ishihara1974, Alekseevskii1968} for the quaternionic-K\"ahler case. This proves \eqref{eq:complex-curvature} and \eqref{eq:quaternionic-curvature}. In particular, all sectional curvatures are positive.

     Moreover, $\Sigma$ is orientable: $\omega^n\neq 0$ in the K\"ahler case, and $\Theta^n\neq 0$ in the quaternionic-K\"ahler case. Since $\Sigma$ is closed, connected, even-dimensional, orientable, and has positive sectional curvature, Synge's theorem implies that $\Sigma$ is simply connected. The local model is therefore the simply connected positive complex or quaternionic space form, and the local isometry extends to a global isometry $\varphi:\Sigma\to\mathbb FP^n$. Choosing the initial local isometry to preserve $J$ in the K\"ahler case, respectively $\calQ$ in the quaternionic-K\"ahler case, gives the stated compatibility because these structures are parallel.

% Choose points $x_0\in\Sigma$, $o\in \mathbb FP^n$, and a linear isometry
% \[
%         I_0:
%         T_{x_0}\Sigma
%         \longrightarrow
%         T_o \mathbb FP^n
% \]
% which is complex linear in the K\"ahler case and, and satisfies $I_0\mathcal Q_{\Sigma,x_0}I_0^{-1}=\mathcal Q_{\HH P^n,o}$ in  the quaternionic-K\"ahler case.  The curvature formulas
% give
% \[
%         I_0\bigl(R^{\Sigma}(X,Y)Z\bigr)
%         =
%         R^{\mathbb FP^n}(I_0X,I_0Y)I_0Z.
% \]
% Since both curvature tensors are parallel, this identity is preserved under
% parallel translation along corresponding curves.  Since $\Sigma$ and $\mathbb FP^n$ are both complete and simply connected, 
% \cite[Main Theorem]{Ambrose1956} therefore gives a global isometry
% \[
%         \varphi:
%         \Sigma
%         \longrightarrow
%         \mathbb FP^n
% \]
% satisfying $d\varphi_{x_0}=I_0$. Since the complex structure, respectively the quaternionic structure bundle, is parallel, $d\varphi$ preserves this structure everywhere.

% Since $J$ and \(J_{\CC P^n}\) are parallel in the K\"ahler case,
% and since \(\mathcal Q\) and \(\mathcal Q_{\HH P^n}\) are parallel
% in the quaternionic-K\"ahler case, this initial compatibility is preserved
% by parallel translation.  Hence
% \[
%         d\varphi_x\circ J_{\Sigma,x}
%         =
%         J_{\mathbb F,\varphi(x)}\circ d\varphi_x
% \]
% in the K\"ahler case, and
% \[
%         d\varphi_x\mathcal Q_{\Sigma,x}d\varphi_x^{-1}
%         =
%         \mathcal Q_{\mathbb F,\varphi(x)}
% \]
% in the quaternionic-K\"ahler case.

 \end{proof}

%------------------------------------------------------------
\subsection{Parallel normal bundle in the sphere}
\label{subsec:rigidity-parallel-normal-bundle}
%------------------------------------------------------------

We use the following elementary fact.

\begin{lemma}\label{lem:rigidity-first-prolongation}
Let $T:T_x\Sigma\to\End(T_x\Sigma)$ be real linear.  Assume that each $T_X$ is
trace-free Hermitian over $\mathbb F$ and $T_XY=T_YX$ for all
$X,Y\in T_x\Sigma$.  Then $T=0$.
\end{lemma}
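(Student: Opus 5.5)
Our plan is to pass to the fully symmetric trilinear form and use the Hermitian symmetry in each slot. Set
\[
C(X,Y,Z)=\inner{T_XY}{Z}.
\]
Since each $T_X$ is symmetric, $C$ is symmetric in $(Y,Z)$. The hypothesis $T_XY=T_YX$ makes $C$ symmetric in $(X,Y)$. Hence $C$ is totally symmetric. Commutation of $T_X$ with a unit complex structure $\phi$ (namely $J$, or any $\phi\in\calQ$ with $\phi^2=-\Id$) gives $C(X,\phi Y,\phi Z)=C(X,Y,Z)$. By total symmetry, this invariance holds for $\phi$ acting on any two of the three slots.

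\textbf{Step 1: show $C=0$.} Take unit $\phi$ and apply the invariance twice: first in slots $(1,2)$, then in slots $(1,3)$. This gives
\[
C(X,Y,Z)=C(\phi X,\phi Y,Z)=C(\phi^2X,\phi Y,\phi Z)=-C(X,\phi Y,\phi Z)=-C(X,Y,Z),
\]
where the last equality uses invariance in slots $(2,3)$. Hence $C=0$ and so $T=0$. This already works in both the complex and quaternionic cases with a single $\phi$.

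\textbf{Step 2: check the trace-free hypothesis.} The argument in Step 1 never uses $\tr T_X=0$, which is consistent: even the scalar part dies, since $T_X=c(X)\Id$ with $T_XY=T_YX$ forces $c(X)Y=c(Y)X$. Taking $X,Y$ independent gives $c=0$ once $\dim\ge2$. So the trace-free assumption is redundant but harmless, and I will note this.

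\textbf{Main obstacle.} The only point needing care is justifying that Hermitian over $\mathbb F$ means commuting with every unit $\phi$. In the quaternionic case $T_X$ commutes with $I$, $J$ and $K$, and any one of these suffices for Step 1. So I expect no real obstacle: the proof is the three-line sign computation above.
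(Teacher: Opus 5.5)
Your proof is correct and is essentially the paper's argument: the paper uses the symmetry $T_XY=T_YX$ together with $\phi$-commutation to get $T_{\phi X}=\phi T_X$, and then observes that the left side is self-adjoint while the right side is skew-adjoint. This is the same sign computation you encode in the totally symmetric form $C$ to obtain $C=-C$. As you note, neither argument uses the trace-free hypothesis.
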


\begin{proof}
Choose $\phi=J$ in the K\"ahler case and a local unit
$\phi\in\calQ$ in the quaternionic-K\"ahler case.  Since $T_X$ is
Hermitian,
\begin{align*}
        T_{\phi X}Y
        =
        T_Y(\phi X)
        =
        \phi T_YX
        =
        \phi T_XY .
\end{align*}
Thus $T_{\phi X}=\phi T_X$.  The left-hand side is self-adjoint,
while the right-hand side is skew-adjoint.  Hence $T_{\phi X}=0$ for all
$X$, and $T=0$.
\end{proof}

We next consider the first normal bundle
\begin{align*}
        E_x=\Span\{A^S_x(X,Y):X,Y\in T_x\Sigma\}
        \subset N_x^S\Sigma .
\end{align*}

\begin{lemma}\label{lem:parallel-E}
	$E$ is a parallel subbundle of $N^S\Sigma$. Moreover, $ \nabla^{\perp,S}A^S=0$, so that $A^S$ is parallel.
\end{lemma}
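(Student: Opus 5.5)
The plan is to prove $\nabla^{\perp,S}A^S=0$ first, and then to deduce that $E$ is parallel. Fix $x$, a unit normal $\eta\in N^S_x\Sigma$, and extend $\eta$ to a local section. Define
\[
T_X:=(\nabla^{\perp,S}_XA^S)_\eta,
\qquad
\langle T_XY,Z\rangle=\langle(\nabla^{\perp,S}_XA^S)(Y,Z),\eta\rangle .
\]
The Codazzi equation for $F:\Sigma\looparrowright S(1)$ holds because the sphere has constant curvature, so its ambient curvature term vanishes on tangent inputs. It says that $(\nabla^{\perp,S}_XA^S)(Y,Z)$ is totally symmetric in $X,Y,Z$. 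Hence $T_XY=T_YX$, and each $T_X$ is self-adjoint.

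Next I would check that each $T_X$ is trace-free and $\mathbb F$-Hermitian. Lemma~\ref{lem:equality-spherical-shape-package} gives $A^S_\xi\in\Herm_0$ for every normal $\xi$ at every point. Write $(\nabla^{\perp,S}_XA^S)_\eta=\nabla_X(A^S_\eta)-A^S_{\nabla^\perp_X\eta}$, where $\nabla_X$ is the induced connection on $\End(T\Sigma)$.

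The second term lies in $\Herm_0$ pointwise. For the first term I use Proposition~\ref{lem:equality-intrinsic-space-form}, which says $\nabla J=0$, respectively $\nabla\calQ\subset\calQ$. In the Kähler case, differentiating $A^S_\eta J=JA^S_\eta$ gives $[\nabla_X A^S_\eta,J]=0$. In the quaternionic case, take a local frame with $\nabla_X I=aJ+bK$ and similar formulas for $J,K$. Differentiating $[A^S_\eta,I]=0$ gives $[\nabla_XA^S_\eta,I]=-[A^S_\eta,aJ+bK]=0$, since $A^S_\eta$ commutes with all of $\calQ$. Trace-freeness is preserved because the trace commutes with differentiation.

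So $T$ satisfies the hypotheses of Lemma~\ref{lem:rigidity-first-prolongation}, and therefore $T=0$. Since $\eta$ was arbitrary, $\nabla^{\perp,S}A^S=0$.

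For the parallelism of $E$, take local tangent fields $Y,Z$. Then
\[
\nabla^{\perp,S}_X\bigl(A^S(Y,Z)\bigr)=A^S(\nabla_XY,Z)+A^S(Y,\nabla_XZ)\in E .
\]
Sections of the form $A^S(Y,Z)$ span $E$ pointwise. To make $E$ a genuine subbundle, its rank must be constant. I get this from Lemma~\ref{lem:equality-spherical-shape-package}: $\eta\perp E_x$ if and only if $A^S_\eta=0$. The map $N^S\to\Herm_0$, $\eta\mapsto A^S_\eta$, is a bundle map whose Gram matrix $\tr(A^S_\eta A^S_\xi)$ is parallel, since $A^S$ is parallel. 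Hence its rank is constant on the connected manifold $\Sigma$.

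Therefore $E$ is a smooth subbundle, and spanning plus the display above shows it is invariant under $\nabla^{\perp,S}$. The main obstacle is the quaternionic Hermitian-preservation step, which relies on $A^S_\eta$ commuting with the whole of $\calQ$, not only with a chosen frame.
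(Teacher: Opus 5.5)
Your proof is correct. It uses the same ingredients as the paper (Codazzi, Lemma~\ref{lem:rigidity-first-prolongation}, and preservation of the Hermitian property under $\nabla$), but it runs them in the opposite order and drops one step.

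The paper first proves that $E$ is parallel by a separate argument. For a local normal field $\zeta\perp E$ one has $A^S_\zeta=0$, so Codazzi forces $A^S_{\tau(X)}Y=A^S_{\tau(Y)}X$, where $\tau(X)$ is the $E$-component of $-\nabla^{\perp,S}_X\zeta$. The prolongation lemma then gives $\tau=0$. Only afterwards does the paper show $\nabla^{\perp,S}A^S=0$, testing only against $\xi\in E_x$ extended with $\nabla^{\perp,S}\xi=0$ at $x$.

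You notice that restricting to $E$ is unnecessary. Lemma~\ref{lem:equality-spherical-shape-package} makes $A^S_\eta$ trace-free Hermitian for every spherical normal $\eta$. So the prolongation argument kills $(\nabla^{\perp,S}A^S)_\eta$ for all $\eta$ directly. Parallelism of $E$ then follows from the standard fact that a parallel second fundamental form has a parallel first normal bundle.

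Your route buys two things. It applies the prolongation lemma once instead of twice. It also proves something the paper leaves tacit: that $E$ has constant rank and is a smooth subbundle. The paper's use of local sections $\zeta\perp E$ presupposes this. Your parallel Gram form $\tr(A^S_\eta A^S_\xi)$ handles it. So would the simpler remark that, once $A^S$ is parallel, tangent and normal parallel transport carry $E_x$ onto $E_y$.

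The paper's first step has one small advantage. It uses only the pointwise Hermitian property and Codazzi, not $\nabla J=0$ or $\nabla\calQ\subset\calQ$.

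Two minor attributions should be corrected, though neither affects the argument. The equivalence $\eta\perp E_x\Leftrightarrow A^S_\eta=0$ is just the definition of $E$, not a consequence of Lemma~\ref{lem:equality-spherical-shape-package}. And $\nabla J=0$, respectively $\nabla\calQ\subset\calQ$, comes from $\nabla\Psi=0$ at the start of Section~\ref{sec:rigidity}.
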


\begin{proof}
	We first show that $E$ is parallel in $N^S\Sigma$.  Let $\zeta$ be a local
normal field with $\zeta\perp E$.  Then $A^S_\zeta=0$.  By Codazzi,
\begin{align*}
        0
        &=
        \left\langle
        (\nabla_X^{\perp,S}A^S)(Y,Z)
        -
        (\nabla_Y^{\perp,S}A^S)(X,Z),
        \zeta
        \right\rangle                                                \\
        &=
        -\left\langle A^S(Y,Z),\nabla_X^{\perp,S}\zeta\right\rangle
        +
        \left\langle A^S(X,Z),\nabla_Y^{\perp,S}\zeta\right\rangle .
\end{align*}
Let $\tau(X)$ be the $E$-part of $-\nabla_X^{\perp,S}\zeta$.  Then
$A^S_{\tau(X)}Y=A^S_{\tau(Y)}X$.  By Lemma \ref{lem:equality-spherical-shape-package},
$X\mapsto A^S_{\tau(X)}$ takes values in $\Herm_0(\mathbb F^n)$.  Lemma
\ref{lem:rigidity-first-prolongation} gives $\tau=0$.  Hence
$\nabla_X^{\perp,S}\zeta\perp E$ for all $X$, so $E$ is parallel.

We next show that $A^S$ is parallel.  Since $E$ is parallel and
$A^S(T\Sigma,T\Sigma)\subset E$, it suffices to test against sections of
$E$.  Fix $x\in\Sigma$ and $\xi\in E_x$, and extend $\xi$ so that
$\nabla^{\perp,S}\xi=0$ at $x$.  Define $C_X^\xi$ by
\begin{align*}
        \langle C_X^\xi Y,Z\rangle
        =
        \left\langle
        (\nabla_X^{\perp,S}A^S)(Y,Z),\xi
        \right\rangle
        \qquad
        \text{at }x .
\end{align*}
Codazzi gives $C_X^\xi Y=C_Y^\xi X$.  Differentiating the Hermitian property of
$A^S_\xi$, using $\nabla J=0$ in the K\"ahler case and
$\nabla\calQ\subset\calQ$ in the quaternionic-K\"ahler case, shows that
$C_X^\xi$ is Hermitian.  Differentiating $\tr A^S_\xi=0$ gives
$\tr C_X^\xi=0$.  Lemma \ref{lem:rigidity-first-prolongation} gives
$C_X^\xi=0$.  Hence
\begin{align}
        \nabla^{\perp,S}A^S=0 .
\label{eq:rigidity-parallel-A}
\end{align}
\end{proof}

%------------------------------------------------------------
\subsection{Identification with the Veronese embeddings}
\label{subsec:rigidity-identification}
%------------------------------------------------------------

Let $W=\Herm_0(T\Sigma;\mathbb F)$ denote the bundle of trace-free $\mathbb F$-Hermitian endomorphisms of
$T\Sigma$. In the K\"ahler case, $\mathbb F = \CC$, and 
\[W_x = \{B\in \End_{\RR}(T_x\Sigma): B^* = B, BJ=JB, \tr_{\CC}(B)=0\}.\]
In the quaternionic-K\"ahler case, $\mathbb F = \HH$, and 
\[W_x = \{B\in \End_{\RR}(T_x\Sigma): B^* = B, B\phi=\phi B\ \text{for all}\ \phi\in\mathcal Q_x, \tr_{\HH}(B)=0\}.\]
We equip $W$ with the fiber metric
\[
        \langle B,C\rangle_W=\tr_{\mathbb F}(BC),
\]
which is the normalization used in the projective models.

\begin{lemma}
\label{lem:curvature-stable-subspaces}
Fix $x\in\Sigma$, and let $V\subset W_x$ be a real linear subspace satisfying
\[
        [R_x^\Sigma(X,Y),V]\subset V
        \qquad
        \text{for every }X,Y\in T_x\Sigma.
\]
Then either $V=\{0\}$ or $V=W_x$.
\end{lemma}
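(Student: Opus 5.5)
The plan is to show that the curvature operators $R^\Sigma_x(X,Y)$ generate, as a Lie algebra of skew endomorphisms of $T_x\Sigma$, the full holonomy algebra $\mathfrak{u}(n)$ in the K\"ahler case, respectively $\mathfrak{sp}(n)\oplus\mathfrak{sp}(1)$ in the quaternionic-K\"ahler case. Then $W_x$ is an irreducible representation of this algebra under the commutator action, and irreducibility gives the dichotomy.

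The first step uses the explicit curvature formulas \eqref{eq:complex-curvature} and \eqref{eq:quaternionic-curvature} of Proposition~\ref{lem:equality-intrinsic-space-form}. Identify $T_x\Sigma\cong\mathbb F^n$ and let $\mathfrak g\subset\mathfrak{so}(T_x\Sigma)$ be the real span of all $R^\Sigma(X,Y)$.

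In the complex case, take $X=e_i$ and $Y=Je_i$. This gives the element $e_i\wedge Je_i+\tfrac12\cdots$, plus a multiple of $J$. Taking $X,Y$ spanning totally real pairs gives $X\wedge Y+JX\wedge JY$, which are exactly the real-type elements of $\mathfrak u(n)$. Combining these, one sees that $\mathfrak g=\mathfrak u(n)$, since $\mathfrak u(n)$ is spanned by $X\wedge Y+JX\wedge JY$ and these are recovered after subtracting the $J$-terms. In particular $\mathfrak g\supset\mathfrak{su}(n)$.

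In the quaternionic case, the analogous computation shows that $\mathfrak g$ contains $\mathfrak{sp}(n)$, which is spanned by the elements $X\wedge Y+\sum_\phi\phi X\wedge\phi Y$. It also contains $\mathfrak{sp}(1)=\Span\{I,J,K\}$; this follows, for instance, by taking $Y=\phi X$ and summing over an orthonormal quaternionic basis, which isolates a nonzero multiple of $\phi$.

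Alternatively, I could avoid the explicit spanning computation and invoke the Ambrose--Singer theorem together with the fact that the space form $\mathbb FP^n$ is symmetric. Then $\mathfrak g$ equals the holonomy algebra, which is the isotropy algebra $\mathfrak u(n)$, respectively $\mathfrak{sp}(n)\oplus\mathfrak{sp}(1)$.

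The second step is to observe that $V$ is invariant under $\mathrm{ad}(\mathfrak g)$, and in particular under $\mathfrak{su}(n)$, respectively $\mathfrak{sp}(n)$. As representations, $W_x\cong\Herm_0(\mathbb F^n)$ is the adjoint representation $\mathfrak{su}(n)\cong i\,\Herm_0(\CC^n)$ in the complex case. In the quaternionic case it is the irreducible $\mathfrak{sp}(n)$-module $\Lambda^2_0\CC^{2n}$, the trace-free part of the second exterior power of the standard representation. It is classical that both are irreducible over $\RR$ for $n\ge2$: $\mathfrak{su}(n)$ is simple, and $\Lambda^2_0$ is real-type irreducible. The $\mathfrak{sp}(1)$ factor acts trivially, because elements of $W_x$ commute with $\calQ$.

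For a self-contained argument, I would instead show directly that any nonzero invariant $V$ is everything. Pick $0\ne B\in V$ and diagonalize it in an $\mathbb F$-unitary basis with eigenvalues $\lambda_i$, not all equal. Commute $B$ with elements $e_i\wedge e_j+\phi e_i\wedge\phi e_j$ to produce off-diagonal Hermitian matrices $E_{ij}+E_{ji}$ with nonzero coefficient $\lambda_i-\lambda_j$. Further commutators then generate all off-diagonal Hermitian elements, including those with $\mathbb F$-imaginary entries, and their brackets produce the diagonal trace-free elements. Together these span $W_x$.

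I expect the main obstacle to be this generation step: verifying cleanly that brackets of the curvature operators with a single nonzero $B$ span all of $\Herm_0$. The quaternionic imaginary off-diagonal entries need particular care, and I would handle them by appealing to simplicity/irreducibility via the representation theory above rather than by grinding matrices.
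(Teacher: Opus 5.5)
Your proposal is correct, but its main line differs from the paper's proof. You argue via holonomy and representation theory, which the paper only records in the Remark after the lemma. The paper's actual proof is the elementary generation argument you sketch as a fallback, and it removes exactly the obstacle you anticipate. Take $D_{pq}=P_p-P_q$ and $\mathcal K^\phi_{pq}=\frac4K R^\Sigma_x(e_p,\phi e_q)=\sum_\psi\psi e_p\wedge\psi\phi e_q$ for $p\ne q$, with $\phi,\psi$ ranging over $\{\Id,J\}$, respectively $\{\Id,I,J,K\}$. Then one gets $[\mathcal K^{\Id}_{ij},B]=(\lambda_i-\lambda_j)H^{\Id}_{ij}$, $[\mathcal K^{\Id}_{ij},H^{\Id}_{ij}]=-2D_{ij}$ and $[\mathcal K^{\Id}_{iq},D_{ij}]=H^{\Id}_{iq}$. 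The off-diagonal Hermitian elements of every imaginary type then arise in one step as $H^\phi_{pq}=\frac12[\mathcal K^\phi_{pq},D_{pq}]$. An explicit coefficient comparison shows that the $D_{rn}$ and $H^\phi_{pq}$ span $W_x$. Your route is shorter and explains why the lemma holds, but it imports the holonomy of $\mathbb FP^n$, simplicity of $\mathfrak{su}(n)$, and real-type irreducibility of $\Lambda^2_0\CC^{2n}$ under $\mathfrak{sp}(n)$. The paper's proof is self-contained, uses only the curvature formula at one point, and in effect proves those irreducibility facts by hand.

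There are two small corrections. First, in your fallback the diagonal trace-free elements must come from brackets with curvature operators, as in $[\mathcal K^{\Id}_{ij},H^{\Id}_{ij}]=-2D_{ij}$. Brackets of two Hermitian elements are skew-adjoint and leave $W_x$, and $V$ is only assumed stable under $[R^\Sigma_x(X,Y),\cdot\,]$. Second, summing $R^\Sigma_x(e_r,Ie_r)$ over a quaternionic orthonormal basis $e_1,\dots,e_n$ does not isolate $I$. For unit $X$ one has $R^\Sigma_x(X,IX)=\frac K2\bigl(X\wedge IX-JX\wedge KX+I\bigr)$, and $X\wedge IX-JX\wedge KX$ is a nonzero element of $\mathfrak{sp}(n)$. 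These terms cancel only after summing over the full real basis $\{e_r,Ie_r,Je_r,Ke_r\}$. In fact you need not put $\mathfrak{sp}(1)$ into $\mathfrak g$ at all. Elements of $W_x$ commute with $J$, respectively with $\calQ_x$, so the terms $\langle\phi X,Y\rangle\phi$ drop out of $[R^\Sigma_x(X,Y),\cdot\,]$ on $W_x$. Hence $V$ is directly stable under $\operatorname{ad}$ of the span of the elements $X\wedge Y+JX\wedge JY$, respectively $X\wedge Y+\sum_\phi\phi X\wedge\phi Y$, which is $\mathfrak u(n)$, respectively $\mathfrak{sp}(n)$.
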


\begin{proof}
Suppose that $V\ne\{0\}$ and choose $0\ne B\in V$. Let
\begin{align*}
	\mathcal U=
	\begin{cases}
		\{\Id,J\}, &\text{in the K\"ahler case,}\\
		\{\Id,I,J,K\}, &\text{in the quaternionic-K\"ahler case.}
	\end{cases}
\end{align*}
For $\phi\in\mathcal U$, we have
\[
        \phi^*=
        \begin{cases}
        \Id,&\phi=\Id,\\
        -\phi,&\phi\ne\Id.
        \end{cases}
\]

Since $B\in W_x$, it is self-adjoint and commutes with every element of
$\mathcal U$. Hence every eigenspace of $B$ is preserved by
$\mathcal U$. We can choose $e_1,\ldots,e_n\in T_x\Sigma$ such that
\[
        \{\psi e_r:\psi\in\mathcal U,\ 1\le r\le n\}
\]
is a real orthonormal basis of $T_x\Sigma$ and $Be_r=\lambda_r e_r$.
Since $B$ commutes with every $\psi\in\mathcal U$, this also gives $B(\psi e_r)=\lambda_r\psi e_r$.

For $1\le r\le n$, let $P_r$ be the orthogonal projection defined by $P_rZ=\sum_{\psi\in\mathcal U}\langle Z,\psi e_r\rangle\,\psi e_r$ and, for $p\ne q$, define $D_{pq}=P_p-P_q\in \End_{\RR}(T_x\Sigma)$. Then we have
\begin{equation}\label{eq:curvature-stable-D-action}
        D_{pq}(\psi e_r)
        =
        \begin{cases}
        \psi e_p,&r=p,\\
        -\psi e_q,&r=q,\\
        0,&r\notin\{p,q\}.
        \end{cases}
\end{equation}
For $p\ne q$ and $\phi\in\mathcal U$, we also define
\[
        \mathcal K_{pq}^{\phi}
        =
        \frac{4}{K}
        R_x^\Sigma(e_p,\phi e_q).
\]
By Proposition \ref{lem:equality-intrinsic-space-form}, the formulae \eqref{eq:complex-curvature} and
\eqref{eq:quaternionic-curvature} give
\begin{equation}\label{eq:curvature-stable-K-wedge}
        \mathcal K_{pq}^{\phi}
        =
        \sum_{\psi\in\mathcal U}
        \psi e_p\wedge\psi\phi e_q.
\end{equation}
This gives
\begin{equation}\label{eq:curvature-stable-K-action}
\begin{aligned}
        \mathcal K_{pq}^{\phi}(\psi e_p)
        &=\psi\phi e_q,\\
        \mathcal K_{pq}^{\phi}(\psi e_q)
        &=-\psi\phi^*e_p,\\
        \mathcal K_{pq}^{\phi}(\psi e_r)
        &=0
        \qquad(r\notin\{p,q\}).
\end{aligned}
\end{equation}

We next consider the commutators
\[
        H_{pq}^{\phi}
        =
        \frac12[\mathcal K_{pq}^{\phi},D_{pq}].
\]
Using \eqref{eq:curvature-stable-D-action} and
\eqref{eq:curvature-stable-K-action}, we obtain
\begin{equation}\label{eq:curvature-stable-H-action}
\begin{aligned}
        H_{pq}^{\phi}(\psi e_p)
        &=\psi\phi e_q,\\
        H_{pq}^{\phi}(\psi e_q)
        &=\psi\phi^*e_p,\\
        H_{pq}^{\phi}(\psi e_r)
        &=0
        \qquad(r\notin\{p,q\}).
\end{aligned}
\end{equation}
In particular, $D_{pq}$ and
$H_{pq}^{\phi}$ commute with every element of $\mathcal U$.

We then observe from \eqref{eq:curvature-stable-D-action}, \eqref{eq:curvature-stable-K-action} and
\eqref{eq:curvature-stable-H-action} that
\begin{equation}\label{eq:curvature-stable-K-D-H}
        [\mathcal K_{pq}^{\phi},D_{pq}]
        =
        2H_{pq}^{\phi},
        \qquad
        [\mathcal K_{pq}^{\Id},H_{pq}^{\Id}]
        =
        -2D_{pq}.
\end{equation}
On the other hand, since $B\in V$ is trace-free, we can find $i\neq j$ such that $\lambda_i\neq \lambda_j$. Then the assumption implies 
\[
        H_{ij}^{\Id}
        =
        \frac{1}{\lambda_i-\lambda_j}
        [\mathcal K_{ij}^{\Id},B]
        \in V.
\]
Hence by \eqref{eq:curvature-stable-K-D-H} and
the assumption, we obtain
\[
        D_{ij}
        =
        -\frac12
        [\mathcal K_{ij}^{\Id},H_{ij}^{\Id}]
        \in V.
\]

Suppose that $q\notin\{i,j\}$. From \eqref{eq:curvature-stable-D-action}, \eqref{eq:curvature-stable-K-action},
\eqref{eq:curvature-stable-H-action} and the assumption we also have
\[
        [\mathcal K_{iq}^{\Id},D_{ij}]
        =
        H_{iq}^{\Id}
        \in V.
\]
Equation \eqref{eq:curvature-stable-K-D-H} and the assumption then gives
\[
        D_{iq}
        =
        -\frac12
        [\mathcal K_{iq}^{\Id},H_{iq}^{\Id}]
        \in V.
\]
Since $D_{pq}=D_{iq}-D_{ip}$ for $p,q\neq i$, we conclude that
\begin{equation}\label{eq:curvature-stable-all-D}
        D_{pq}\in V.
\end{equation}
for every $p\neq q$. Finally, \eqref{eq:curvature-stable-K-D-H} and the assumption imply
\begin{equation}\label{eq:curvature-stable-all-H}
        H_{pq}^{\phi}
        =
        \frac12[\mathcal K_{pq}^{\phi},D_{pq}]
        \in V.
\end{equation}
for every $p\neq q$ and $\phi\in\mathcal U$.

We next show that the endomorphisms $D_{pq}$ and
$H_{pq}^{\phi}$ generate every element of
$W_x$. Let $C\in W_x$ be arbitrary. Define
\[
        a_r=\langle Ce_r,e_r\rangle,
        \qquad
        c_{pq}^{\phi}
        =
        \langle Ce_p,\phi e_q\rangle\quad
        (p<q,\, \phi\in\mathcal U).        
\]
Since $C$ is trace-free,
\begin{equation}\label{eq:curvature-stable-sum-a}
        \sum_{r=1}^na_r=0.
\end{equation}

If $\phi\ne\Id$, then self-adjointness of $C$, the property
$C\phi=\phi C$, and $\phi^*=-\phi$ give
\begin{equation}\label{eq:curvature-stable-diagonal-zero}
        \langle Ce_r,\phi e_r\rangle=0
        \qquad(\phi\ne\Id).
\end{equation}
For $p<q$ and $\phi\in\mathcal U$, the same properties give
\begin{align}
        \langle Ce_q,\phi^*e_p\rangle
        =
        c_{pq}^{\phi}.
\label{eq:curvature-stable-opposite-coefficient}
\end{align}

Expanding $Ce_r$ in the chosen real orthonormal basis and using
\eqref{eq:curvature-stable-diagonal-zero} and
\eqref{eq:curvature-stable-opposite-coefficient}, we obtain
\begin{equation}\label{eq:curvature-stable-Cer}
        Ce_r
        =
        a_re_r
        +
        \sum_{q=r+1}^n
        \sum_{\phi\in\mathcal U}
        c_{rq}^{\phi}\,\phi e_q
        +
        \sum_{p=1}^{r-1}
        \sum_{\phi\in\mathcal U}
        c_{pr}^{\phi}\,\phi^*e_p.
\end{equation}

On the other hand, consider the endomorphism
\[
        C_0
        =
        \sum_{r=1}^{n-1}a_rD_{rn}
        +
        \sum_{1\le p<q\le n}
        \sum_{\phi\in\mathcal U}
        c_{pq}^{\phi}H_{pq}^{\phi}.
\]
Using \eqref{eq:curvature-stable-D-action},
\eqref{eq:curvature-stable-H-action}, and
\eqref{eq:curvature-stable-sum-a}, we find that
\[
        C_0e_r
        =
        a_re_r
        +
        \sum_{q=r+1}^n
        \sum_{\phi\in\mathcal U}
        c_{rq}^{\phi}\,\phi e_q
        +
        \sum_{p=1}^{r-1}
        \sum_{\phi\in\mathcal U}
        c_{pr}^{\phi}\,\phi^*e_p.
\]
Comparison with \eqref{eq:curvature-stable-Cer} gives $C_0e_r=Ce_r$ for every $1\leq r\leq n$.
Furthermore, both $C_0$ and $C$ commute with every element of
$\mathcal U$. Hence $C_0(\psi e_r) = C(\psi e_r)$.
Thus $C_0$ and $C$ agree on a real basis of $T_x\Sigma$, and therefore $C=C_0$.
We have proved the exact decomposition
\begin{equation}\label{eq:curvature-stable-decomposition}
        C
        =
        \sum_{r=1}^{n-1}a_rD_{rn}
        +
        \sum_{1\le p<q\le n}
        \sum_{\phi\in\mathcal U}
        c_{pq}^{\phi}H_{pq}^{\phi}.
\end{equation}

By \eqref{eq:curvature-stable-all-D} and
\eqref{eq:curvature-stable-all-H}, every term on the right-hand side of
\eqref{eq:curvature-stable-decomposition} belongs to $V$. Hence
$C\in V$. Since $C\in W_x$ was arbitrary, $W_x\subset V$, and the assertion follows.
\end{proof}

\begin{remark}
The preceding proof is the elementary pointwise form of a standard holonomy
irreducibility fact.  By Proposition~\ref{lem:equality-intrinsic-space-form}, $(\Sigma,g)$ is isometric to $(\mathbb FP^2, g_{FS})$, the curvature endomorphisms
generate $\mathfrak u(n)$, respectively
$\mathfrak{sp}(n)\oplus\mathfrak{sp}(1)$.  The action on
$W_x=\Herm_0(T_x\Sigma;\mathbb F)$ is the commutator action.

In the K\"ahler case, the central $\mathfrak u(1)$ acts trivially, and
$B\mapsto iB$ identifies $\Herm_0(\mathbb C^n)$ with $\mathfrak{su}(n)$,
where the action is the adjoint action.  Since $\mathfrak{su}(n)$ is simple,
this real representation is irreducible.  In the quaternionic case, the
$\mathfrak{sp}(1)$-factor acts trivially, while the
$\mathfrak{sp}(n)$-module $\Herm_0(\mathbb H^n)$ is the isotropy
representation of the irreducible symmetric space $SU^*(2n)/Sp(n)$ and is therefore irreducible.  Thus the lemma also follows from standard
holonomy irreducibility; see \cite{AmbroseSinger1953} and
\cite[Chs.~V--VI, X]{Helgason1978}.
\end{remark}

\vspace{0.5cm}

By Lemma~\ref{lem:equality-spherical-shape-package}, $A^S_\eta\in W_x$ for every $\eta\in N_x^S\Sigma$. Therefore the shape operators define a bundle map
\[
        L:E\longrightarrow W,
        \qquad
        L(\eta)=A^S_\eta .
\]

\begin{proposition}
\label{prop:rigidity-normal-form}
The map $L$ is a parallel bundle isomorphism and
\begin{equation}\label{eq:rigidity-shape-map-homothety}
\sqrt{\frac{n+1}{n}}\,L:E\longrightarrow W
\end{equation}
is a parallel bundle isometry. 
\end{proposition}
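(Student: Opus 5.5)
We need to show: $L$ is parallel, injective, surjective, and that $\sqrt{(n+1)/n}\,L$ is isometric.

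\emph{Parallelism.} By Lemma~\ref{lem:parallel-E}, $E$ is a parallel subbundle and $\nabla^{\perp,S}A^S=0$. For a section $\eta$ of $E$ and tangent fields $Y,Z$, differentiate $\langle L(\eta)Y,Z\rangle=\langle A^S(Y,Z),\eta\rangle$ along $X$. Using $\nabla^{\perp,S}A^S=0$, this gives
\[
(\nabla_X L(\eta))=L(\nabla^{\perp,S}_X\eta).
\]
Here the connection on $W$ is the one induced from $\nabla$ on $\End(T\Sigma)$. It preserves $W$ because $J$, respectively $\calQ$, is parallel by the first part of Section~\ref{sec:rigidity}. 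Hence $\nabla L=0$.

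\emph{Injectivity.} If $L(\eta)=0$ with $\eta\in E_x$, then $\langle A^S(X,Y),\eta\rangle=0$ for all $X,Y$. Since $E_x$ is spanned by the values $A^S(X,Y)$, this forces $\eta=0$.

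\emph{Surjectivity.} Let $V=L(E_x)\subset W_x$. Since $L$ is parallel and $E$ is a parallel subbundle, $L(E)$ is a parallel subbundle of $W$, so it is invariant under the curvature of the connection on $W$. That curvature acts on $B\in W_x$ by $B\mapsto[R^\Sigma(X,Y),B]$. So $[R^\Sigma_x(X,Y),V]\subset V$. Moreover $V\neq 0$, because otherwise $A^S\equiv0$, contradicting $|A^S(v,v)|^2=(n-1)/(n+1)>0$ from Lemma~\ref{lem:equality-spherical-shape-package} when $n\ge2$. Lemma~\ref{lem:curvature-stable-subspaces} then gives $V=W_x$. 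Hence $L$ is a parallel bundle isomorphism.

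\emph{Homothety constant.} This is the main step. Let $\eta_\alpha$ be an orthonormal basis of $E_x$ and set $B_\alpha=L(\eta_\alpha)$. Lemma~\ref{lem:equality-spherical-shape-package} gives, for every unit $v$,
\[
\sum_\alpha\langle B_\alpha v,v\rangle^2=\frac{n-1}{n+1}.
\]
Consider the quadratic form $Q(C)=\sum_\alpha\langle B_\alpha,C\rangle_W^2$ on $W_x$. It is $Q(C)=\langle L\,L^*C,C\rangle_W$, where $L^*$ is taken with respect to the metric on $E$ and $\langle\cdot,\cdot\rangle_W$. The operator $L L^*$ is parallel, since $L$ is, so it commutes with the curvature action on $W_x$. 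By Lemma~\ref{lem:curvature-stable-subspaces} its eigenspaces are invariant under that action, so each is $0$ or $W_x$. Hence $LL^*=c\,\Id_W$ for some constant $c$, and $\sqrt{c}^{-1}L$ is an isometry.

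To compute $c$, evaluate $Q$ on the rank-one element
\[
C_v=\pi_v-\frac1n\Id,
\]
where $\pi_v$ is the orthogonal projection onto the $\mathbb F$-line $\mathbb Fv$. For $B\in W_x$, which is $\mathbb F$-Hermitian and trace-free, one has $\tr_{\mathbb F}(B\pi_v)=\langle Bv,v\rangle$ for unit $v$, so $\langle B_\alpha,C_v\rangle_W=\langle B_\alpha v,v\rangle$. Then
\[
\frac{n-1}{n+1}=\sum_\alpha\langle B_\alpha v,v\rangle^2=Q(C_v)=c\,|C_v|_W^2=c\left(1-\frac2n+\frac1n\right)=c\,\frac{n-1}{n},
\]
which gives $c=n/(n+1)$. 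Therefore $\sqrt{(n+1)/n}\,L$ is a parallel bundle isometry, consistent with the model computation \eqref{eq:model-FPn-spherical-A}.

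The point needing care is to check that the $\tr_{\mathbb F}$ normalization makes $\langle B,\pi_v\rangle_W=\langle Bv,v\rangle$ exactly, without a factor $2$ or $4$, in both the complex and quaternionic cases.
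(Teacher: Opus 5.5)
Your proposal is correct and follows essentially the same route as the paper: parallelism of $L$ from Lemma~\ref{lem:parallel-E}, a Schur-type argument via Lemma~\ref{lem:curvature-stable-subspaces} showing that the parallel operator $LL^*$ is scalar, and evaluation on $vv^*-\frac1n\Id$ to obtain $\frac{n}{n+1}$. Your normalization check $\tr_{\mathbb F}(B\,vv^*)=\langle Bv,v\rangle$ is indeed right in both the complex and quaternionic cases. The only minor difference is that you get surjectivity by applying Lemma~\ref{lem:curvature-stable-subspaces} to the parallel image $L(E_x)$, whereas the paper reads it off from $LL^*=\frac{n}{n+1}\Id_W$; both arguments are valid.
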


\begin{proof}
Since $E$ is parallel and $\nabla^{\perp,S}A^S=0$ by
Lemma~\ref{lem:parallel-E}, the map $L$ is parallel. Hence
\[
        T:=LL^*:W\longrightarrow W
\]
is a parallel self-adjoint endomorphism. 	

Fix $x\in\Sigma$. Then $T_x\colon W_x\to W_x$  has an eigenvalue $\lambda$ and a corresponding nonzero eigenspace
\[
        V_\lambda=\ker(T_x-\lambda\Id_{W_x}).
\]
Since $T$ is parallel, we have
\[
        T_x\bigl([R_x^\Sigma(X,Y),B]\bigr)
        =
        [R_x^\Sigma(X,Y),T_xB]
\]
for every $X,Y\in T_x\Sigma$ and $B\in W_x$. Hence $[R_x^\Sigma(X,Y),V_\lambda]\subset V_\lambda$.
Lemma~\ref{lem:curvature-stable-subspaces} therefore gives $V_\lambda=W_x$. Consequently, $T_x=\lambda\Id_{W_x}$ for any $x\in\Sigma$. Since $T$ is parallel and $\Sigma$ is connected, we have $T=\lambda\Id_W$ for some constant $\lambda\in\RR$.

We now determine the constant $\lambda$.  Choose an orthonormal basis $\eta_1,\ldots,\eta_r$ of $E_x$, and choose an $\mathbb F$-linear isometry $T_x\Sigma\simeq \mathbb F^n$.
For a unit vector $v\in T_x\Sigma\simeq\mathbb F^n$, set
\[
        P_v=vv^*-\frac1n\Id .
\]
Then $P_v\in W_x$ and
\[
        |P_v|^2
        =
        \tr_{\mathbb F}
        \left(vv^*-\frac1n\Id\right)^2
        =
        1-\frac2n+\frac1n
        =
        \frac{n-1}{n}.
\]
Since $A^S_{\eta_a}$ is trace-free, $\langle A^S_{\eta_a}v,v\rangle =  \langle A^S_{\eta_a},P_v\rangle$.
Therefore, Lemma~\ref{lem:equality-spherical-shape-package} gives
\begin{align*}
        \langle T_xP_v,P_v\rangle
        =
        |L_x^*P_v|^2                                                         
        =
        \sum_{a=1}^r
        \langle A^S_{\eta_a},P_v\rangle^2                                    
        =
        |A^S(v,v)|^2
        =
        \frac{n-1}{n+1}.
\end{align*}
On the other hand, since $T=\lambda\Id_W$,
\[
        \langle T_xP_v,P_v\rangle
        =
        \lambda |P_v|^2
        =
        \lambda\frac{n-1}{n}.
\]
Combining the above two identities, we obtain $\lambda=\frac n{n+1}$.
Therefore
\begin{equation}\label{eq:rigidity-A-Casimir}
        LL^*
        =
        \frac n{n+1}\Id_W .
\end{equation}

Consequently, $L_x:E_x\to W_x$ is surjective.  Indeed, for every
$B\in W_x$, $B=\frac{n+1}{n}L_xL_x^*B\in \operatorname{im}L_x$.
Moreover, $L_x$ is injective.  If $\eta\in E_x$ and $L_x\eta=0$, then
$A^S_\eta=0$, so $\langle A^S(X,Y),\eta\rangle =  \langle A^S_\eta X,Y\rangle = 0$ for all $X,Y\in T_x\Sigma$. 
Since
\[
        E_x=\Span\{A^S(X,Y):X,Y\in T_x\Sigma\},
\]
this forces $\eta=0$.  Therefore $L_x:E_x\to W_x$ is an isomorphism for every $x\in\Sigma$.
Since $L$ is an isomorphism and $LL^*=\frac n{n+1}\Id_W$, we also have $L^*L=\frac n{n+1}\Id_E$. Thus the assertion follows.
\end{proof}

\begin{lemma}
\label{lem:reduction-of-codimension}
There exists a linear subspace $V_0\subset\RR^{m+\ell}$ such that
\[
        F(\Sigma)\subset S(V_0)
        :=
        V_0\cap S^{m+\ell-1}(1),
\]
and the normal bundle of $F:\Sigma\longrightarrow S(V_0)$ is precisely $E$.
\end{lemma}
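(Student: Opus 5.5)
This is the classical reduction-of-codimension argument (Erbacher type). I would define, at each point $x\in\Sigma$, the linear subspace
\[
        V_x = \RR F(x)\oplus dF_x(T_x\Sigma)\oplus E_x\subset\RR^{m+\ell}.
\]
The plan is to show that $V_x$ is independent of $x$. I will then take $V_0:=V_x$.

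The rank of $V_x$ is constant. Indeed, $E\cong W$ by Proposition~\ref{prop:rigidity-normal-form}, and $W$ has constant rank. So $x\mapsto V_x$ is a smooth subbundle of the trivial bundle $\Sigma\times\RR^{m+\ell}$. Since $\Sigma$ is connected, it suffices to show that $V$ is invariant under the flat connection $D$. Concretely, I will check that $D_X\xi\in V$ for every local section $\xi$ of $V$, taking the three summands in turn.

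\emph{The position vector.} We have $D_XF=X\in V$.

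\emph{Tangent fields $Y$.} Using the Gauss formula in the sphere together with $A=A^S-gF$, I get
\[
        D_XY=\nabla_XY+A^S(X,Y)-\langle X,Y\rangle F .
\]
Each term lies in $V$, because $A^S(X,Y)\in E$ by the definition of $E$.

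\emph{Sections $\eta$ of $E$.} Here I use the Weingarten formula:
\[
        D_X\eta=-A^S_\eta X+\nabla^{\perp,S}_X\eta+\langle D_X\eta,F\rangle F .
\]
The first term is tangent. The last term is a multiple of $F$; in fact its coefficient is $-\langle\eta,X\rangle=0$. The middle term lies in $E$ because $E$ is parallel in $N^S\Sigma$ by Lemma~\ref{lem:parallel-E}. This last case is the only point where a genuine geometric input is needed, and that input is already supplied by Lemma~\ref{lem:parallel-E}.

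Since $V$ is $D$-parallel in the trivial bundle, it is a constant subspace $V_0$. We have $F(x)\in V_0$ and $|F|=1$ from the equality case, so $F(\Sigma)\subset S(V_0)$.

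It remains to identify the normal bundle. At each $x$, the tangent space of $S(V_0)$ at $F(x)$ is $F(x)^\perp\cap V_0=dF_x(T_x\Sigma)\oplus E_x$. This is an orthogonal sum: $E_x\subset N^S_x\Sigma$ is orthogonal both to $T_x\Sigma$ and to $F(x)$. Hence the normal space of $F:\Sigma\to S(V_0)$ at $x$ is exactly $E_x$, as claimed.

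The only mild subtlety is the constancy of the rank of $V$, which is needed so that $V$ is a smooth subbundle and the parallel-transport argument applies. This is handled by the isomorphism $L:E\to W$.
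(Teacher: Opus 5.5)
Your proposal is correct and follows the paper's proof essentially step by step. It uses the same subbundle $\RR F\oplus T\Sigma\oplus E$ and the same Gauss--Weingarten check that the flat connection preserves it, with the parallelism of $E$ from Lemma~\ref{lem:parallel-E} as the key input. It also identifies the normal space of $F$ in $S(V_0)$ with $E_x$ in the same way. The differences are cosmetic: you justify the constant rank of $E$ explicitly via the isomorphism $L\colon E\to W$, which the paper only asserts, and you deduce constancy of $V_x$ directly from invariance under the flat connection rather than through the parallel projection $\Pi$.
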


\begin{proof}
For $x\in\Sigma$, set
\[
        V_x
        =
        \mathbb RF(x)\oplus T_x\Sigma\oplus E_x
        \subset\RR^{m+\ell},
\]
where $T_x\Sigma$ is identified with its image under $dF_x$, and define
\[
        \mathcal V
        =
        \bigcup_{x\in\Sigma}\{x\}\times V_x
        \subset
        \Sigma\times\RR^{m+\ell}.
\]
Since the three summands are mutually orthogonal and have constant ranks,
$\mathcal V$ is a smooth Euclidean subbundle.

Let $D$ denote the standard flat connection on the trivial bundle
$\Sigma\times\RR^{m+\ell}$. If $s=aF+Y+\eta$ is a local section of $\mathcal V$, where
$Y\in\Gamma(T\Sigma)$ and $\eta\in\Gamma(E)$, then the Euclidean
Gauss--Weingarten formulas give
\[
\begin{aligned}
        D_Xs
        =
        \bigl(X(a)-\langle X,Y\rangle\bigr)F
        +
        \bigl(aX+\nabla_XY-A^S_\eta X\bigr)
        +
        \bigl(A^S(X,Y)+\nabla_X^{\perp,S}\eta\bigr).
\end{aligned}
\]
Since $A^S(T\Sigma,T\Sigma)\subset E$ and $E$ is parallel in the spherical normal connection, the three lines
on the right belong respectively to $\mathbb RF,\  T\Sigma,\  E$. Consequently, $D_X\Gamma(\mathcal V)\subset\Gamma(\mathcal V)$.

Let $\Pi_x:\RR^{m+\ell}\longrightarrow V_x$ be the orthogonal projection. Since $D$ is metric and preserves
$\mathcal V$, it also preserves $\mathcal V^\perp$. Hence the induced
connection on $\Sigma\times\End(\RR^{m+\ell})$ satisfies $D^{\End}\Pi=0$.
Under the standard trivialization of the endomorphism bundle,
$D^{\End}\Pi=0$ means that $d\Pi=0$ for the map $\Pi:\Sigma\longrightarrow\End(\RR^{m+\ell})$.
Since $\Sigma$ is connected, $\Pi$ is constant. 

Fix $x_0\in\Sigma$ and set
\[
        V_0=\operatorname{Im}\Pi_{x_0}.
\]
Then $V_x=\operatorname{Im}\Pi_x=V_0$ for every $x\in\Sigma$. Since $ F(x)\in\mathbb RF(x)\subset V_x=V_0$, we obtain
\[
        F(\Sigma)\subset S(V_0).
\]
Furthermore,
\[
\begin{aligned}
        T_{F(x)}S(V_0)
        =
        V_0\cap F(x)^\perp
        =
        T_x\Sigma\oplus E_x.
\end{aligned}
\]
Therefore the orthogonal complement of $T_x\Sigma$ in
$T_{F(x)}S(V_0)$ is $E_x$. Thus $E$ is the full normal bundle
of $F:\Sigma\to S(V_0)$.
\end{proof}

By Lemma~\ref{lem:reduction-of-codimension}, we henceforth regard $F$ as
an isometric immersion
\[
        F:\Sigma\longrightarrow S(V_0)
\]
whose full spherical normal bundle is $E$.  This reduction will be used
below when we compare the full normal bundle of the lifted immersion with
that of the standard model.

We next identify $F$ with the standard Veronese embeddings.  Let
\[
        M_{\mathbb F}
        =
        (\mathbb FP^n,g_{\mathbb F}),
        \qquad
        g_{\mathbb F}
        =
        \Phi_{\mathbb F}^*g_{S(\mathcal H_{\mathbb F})},
\]
which is the standard Veronese embedding given by \eqref{eq:model-FPn-map}.
Proposition~\ref{lem:equality-intrinsic-space-form} gives a global isometry
\begin{equation}\label{eq:rigidity-universal-cover-isometry}
        \varphi:
        \Sigma
        \longrightarrow
        M_{\mathbb F}
\end{equation}
such that $d\varphi$ preserves the complex/quaternionic structure.

We now compare the immersion with the standard Veronese
embedding on the common base $\Sigma$.  Set
\[
        F_\Phi
        =
        \Phi_{\mathbb F}\circ\varphi:
        \Sigma\longrightarrow S(\mathcal H_{\mathbb F}).
\]
Let $\overline E$ be the spherical normal bundle of the Veronese embedding
$\Phi_{\mathbb F}$, and let
\[
        \overline L:\overline E\longrightarrow\overline W,
        \qquad
        \overline L(\overline\eta)=\overline A^S_{\overline\eta},
        \qquad
        \overline W=\Herm_0(TM_{\mathbb F};\mathbb F),
\]
be its shape-operator map.  Pull back the model bundles to $\Sigma$:
\[
        E_\Phi=\varphi^*\overline E,
        \qquad
        W_\Phi=\varphi^*\overline W,
        \qquad
        L_\Phi=\varphi^*\overline L:
        E_\Phi\longrightarrow W_\Phi .
\]
Thus $E$ and $E_\Phi$ are the full spherical normal bundles of $F$ and $F_\Phi$, respectively.

\begin{lemma}
\label{lem:rigidity-lifted-normal-isometry}
There exists a parallel bundle isometry
\[
\Psi:E\longrightarrow E_\Phi
\]
such that
\begin{equation}\label{eq:rigidity-second-fundamental-form-matched}
        \Psi\bigl(A^{S,F}(X,Y)\bigr)
        =
        A^{S,F_\Phi}(X,Y)
\end{equation}
for all $X,Y\in T\Sigma$.
\end{lemma}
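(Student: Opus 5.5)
The plan is to build $\Psi$ by composing the two shape-operator homotheties through an identification of the tangent-level bundles $W$ and $W_\Phi$. Since $\varphi$ is an isometry that preserves the complex, respectively quaternionic, structure, $d\varphi$ induces a bundle isometry
\[
\chi:W\longrightarrow W_\Phi,\qquad \chi(B)=d\varphi\circ B\circ d\varphi^{-1}.
\]
It preserves self-adjointness, commutation with $J$ (respectively with $\calQ$), the $\mathbb F$-trace, and the metric $\tr_{\mathbb F}(BC)$. Because $d\varphi$ intertwines the Levi-Civita connections, $\chi$ is parallel for the connections induced on endomorphisms.

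Next I would record that the model satisfies all the equality conclusions. By Section~\ref{sec:models}, \eqref{eq:model-FPn-spherical-A} and \eqref{eq:model-FPn-spherical-sharp} show that $\Phi_{\mathbb F}$ is minimal in the sphere with Hermitian trace-free shape operators and $|\overline A^S(v,v)|^2=\frac{n-1}{n+1}$. Its spherical normal space at $[e_0]$ is the full space of trace-free Hermitian $n\times n$ blocks, of dimension $\dim W$, so the first normal bundle of $\Phi_{\mathbb F}$ is all of $\overline E$. The arguments of Lemma~\ref{lem:parallel-E} and Proposition~\ref{prop:rigidity-normal-form} use only these properties together with the intrinsic curvature of $\mathbb FP^n$. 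Applying them to $F_\Phi$, or alternatively using homogeneity of $\Phi_{\mathbb F}$ under the projective isometry group, gives that $\sqrt{\frac{n+1}{n}}L_\Phi:E_\Phi\to W_\Phi$ is a parallel bundle isometry. The same propositions give this for $\sqrt{\frac{n+1}{n}}L:E\to W$.

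I then define
\[
\Psi=L_\Phi^{-1}\circ\chi\circ L:E\longrightarrow E_\Phi .
\]
As a composition of parallel maps, and of isometries after the two factors $\sqrt{(n+1)/n}$ cancel, $\Psi$ is a parallel bundle isometry. To check \eqref{eq:rigidity-second-fundamental-form-matched}, take $\eta\in E$. Using $\langle\Psi\xi,\Psi\eta\rangle=\langle\xi,\eta\rangle$ and $L_\Phi(\Psi\eta)=\chi(A^S_\eta)$, we compute
\[
\langle A^{S,F_\Phi}(X,Y),\Psi\eta\rangle=\langle L_\Phi(\Psi\eta)X,Y\rangle=\langle \chi(A^S_\eta)X,Y\rangle .
\]
Here $X,Y$ are viewed in $T M_{\mathbb F}$ via $d\varphi$. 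The right-hand side equals $\langle A^S_\eta X,Y\rangle=\langle A^{S,F}(X,Y),\eta\rangle=\langle\Psi A^{S,F}(X,Y),\Psi\eta\rangle$. Since $\Psi$ is onto $E_\Phi$ and $A^{S,F_\Phi}$ takes values in $E_\Phi$, the identity follows.

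The main obstacle is the careful check that $\chi$ is parallel and compatible with the pulled-back model connection on $W_\Phi$, and that $L_\Phi$ is parallel with respect to the pulled-back normal connection. The cleanest route is naturality: pulling back by the isometry $\varphi$ commutes with the Levi-Civita and normal connections, and $L_\Phi=\varphi^*\overline L$. So parallelism of $L_\Phi$ reduces to parallelism of $\overline L$ on the model, which follows from $\overline\nabla^{\perp,S}\overline A^S=0$ for the symmetric Veronese embedding. I would verify that last fact either by homogeneity or by rerunning Lemma~\ref{lem:parallel-E} verbatim for $\Phi_{\mathbb F}$.
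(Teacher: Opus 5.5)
Your proposal is correct and follows the paper's proof. The paper uses the conjugation map $J_\varphi\colon W_\Phi\to W$, which is the inverse of your $\chi$, and sets $\Psi=(J_\varphi\circ L_\Phi)^{-1}\circ L$; this is exactly your $L_\Phi^{-1}\circ\chi\circ L$, and the paper then checks the second-fundamental-form identity by the same pairing argument, using that $\Psi$ is onto. Your added justification that $\sqrt{(n+1)/n}\,L_\Phi$ is a parallel isometry is correct. The model's first normal bundle is its full normal bundle, so Lemma~\ref{lem:parallel-E} and Proposition~\ref{prop:rigidity-normal-form} apply verbatim, or one can use homogeneity. This fills in a step the paper only asserts via the explicit formula and equivariance.
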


\begin{proof}
From Proposition~\ref{lem:equality-intrinsic-space-form}, the differential $d\varphi$ preserves the relevant $\mathbb F$-structure,
and therefore identifies the pulled-back endomorphism bundle $W_\Phi$ with
$W$.  Explicitly, we can define a bundle map
\[
        J_\varphi:W_\Phi\longrightarrow W,
        \qquad
        (J_\varphi C)_x
        =
        (d\varphi_x)^{-1}\circ C_x\circ d\varphi_x .
\]
This is an isometric bundle isomorphism.  It is also parallel.  Indeed,
$\varphi$ is a local isometry, so
\[
        d\varphi(\nabla^\Sigma_XY)
        =
        \nabla^{M_{\mathbb F}}_{d\varphi X}(d\varphi Y),
\]
and hence conjugation by $d\varphi$ intertwines the induced connections on
the trace-free Hermitian endomorphism bundles:
\[
        \nabla_X^{W}(J_\varphi C)
        =
        J_\varphi(\nabla_X^{W_\Phi}C).
\]

Now set
\[
        \widehat L_\Phi=J_\varphi\circ L_\Phi:
        E_\Phi\longrightarrow W .
\]
By Proposition~\ref{prop:rigidity-normal-form},
$\sqrt{(n+1)/n}\,L$ is a parallel bundle isometry.  For the standard
projective embedding, the explicit formula \eqref{eq:model-FPn-spherical-A},
together with equivariance, gives that
$\sqrt{(n+1)/n}\,\overline L$ is a parallel bundle isometry; equivalently,
$\sqrt{(n+1)/n}\,L_\Phi$ is a parallel bundle isometry.  Since $J_\varphi$
is a parallel isometry, both $\sqrt{(n+1)/n}\, L$ and
$\sqrt{(n+1)/n}\,\widehat L_\Phi$ are parallel bundle isometries onto the
same bundle $W$.

We then define
\begin{equation}\label{eq:rigidity-normal-bundle-isometry}
        \Psi
        =
        \widehat L_\Phi^{-1}\circ L:
        E\longrightarrow E_\Phi .
\end{equation}
Because $L$ and $\widehat L_\Phi$ are parallel homotheties with the
same factor, $\Psi$ is a parallel bundle isometry.

Finally, $L$ and $\widehat L_\Phi$ are precisely the shape-operator
maps of the two immersions, viewed as endomorphisms of $T\Sigma$:
\[
        L_x(\eta)=A^{S,F}_\eta,
        \qquad
        (\widehat L_\Phi)_x(\zeta)=A^{S,F_\Phi}_\zeta .
\]
Therefore
\[
        A^{S,F_\Phi}_{\Psi\eta}
        =
        \widehat L_\Phi(\Psi\eta)
        =
        L(\eta)
        =
        A^{S,F}_{\eta}.
\]
Since $\Psi$ is onto, \eqref{eq:rigidity-second-fundamental-form-matched}
then follows by pairing with arbitrary normal vectors: for every $\eta\in E$,
\[
\begin{aligned}
        \left\langle
        \Psi A^{S,F}(X,Y),\Psi\eta
        \right\rangle
        &=
        \left\langle
        A^{S,F}(X,Y),\eta
        \right\rangle                                      \\
        &=
        \left\langle
        A^{S,F}_\eta X,Y
        \right\rangle                                      \\
        &=
        \left\langle
        A^{S,F_\Phi}_{\Psi\eta}X,Y
        \right\rangle                                      \\
        &=
        \left\langle
        A^{S,F_\Phi}(X,Y),\Psi\eta
        \right\rangle .
\end{aligned}
\]
\end{proof}

\begin{proposition}
\label{prop:rigidity-ambient-congruence}
There exists an orthogonal linear isomorphism
\[
        Q:V_0\longrightarrow\mathcal H_{\mathbb F}
\]
such that
\begin{equation}\label{eq:rigidity-global-congruence}
        Q\circ F
        =
        F_\Phi
        =
        \Phi_{\mathbb F}\circ\varphi .
\end{equation}
\end{proposition}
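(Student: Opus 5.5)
The plan is to apply the fundamental theorem of submanifolds in space forms (the rigidity half, i.e.\ uniqueness up to ambient isometry) to the two immersions $F:\Sigma\to S(V_0)$ and $F_\Phi:\Sigma\to S(\mathcal H_{\mathbb F})$. Both are immersions of the same connected Riemannian manifold $(\Sigma,g)$. The metric is shared because $\varphi$ is an isometry onto $M_{\mathbb F}$, whose metric is by definition $\Phi_{\mathbb F}^*g_{S(\mathcal H_{\mathbb F})}$. Each target is a unit sphere.

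\textbf{Step 1: codimensions agree.} By Lemma~\ref{lem:reduction-of-codimension}, the full normal bundle of $F$ in $S(V_0)$ is $E$. By Proposition~\ref{prop:rigidity-normal-form}, $E\cong W$ has rank $\dim_\RR\Herm_0(\mathbb F^n)$. For the model, $\dim S(\mathcal H_{\mathbb F})-m$ equals $\dim_\RR\mathcal H_{\mathbb F}-1-m$, which also equals $\dim_\RR \Herm_0(\mathbb F^n)$. For example, in the complex case $(n+1)^2-2-2n=n^2-1$. Hence $\dim V_0=\dim\mathcal H_{\mathbb F}$, and $E_\Phi$ is the full spherical normal bundle of $F_\Phi$.

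\textbf{Step 2: the data match.} Lemma~\ref{lem:rigidity-lifted-normal-isometry} supplies a bundle isometry $\Psi:E\to E_\Phi$ that carries $A^{S,F}$ to $A^{S,F_\Phi}$. It also says $\Psi$ is parallel, meaning it intertwines $\nabla^{\perp,S}$ on $E$ with the normal connection on $E_\Phi$. For the latter we need that the pullback connection on $\varphi^*\overline E$ is the normal connection of $F_\Phi=\Phi_{\mathbb F}\circ\varphi$, which is immediate since $\varphi$ is an isometry. Thus the triples (metric, second fundamental form, normal connection) of the two immersions correspond under the isometry $\Psi$ covering the identity on $\Sigma$.

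\textbf{Step 3: conclude.} Since $\Sigma$ is connected (indeed simply connected by Proposition~\ref{lem:equality-intrinsic-space-form}), the uniqueness part of the fundamental theorem gives an isometry of unit spheres $S(V_0)\to S(\mathcal H_{\mathbb F})$ carrying $F$ to $F_\Phi$. Concretely, I would fix $x_0$ and define an orthogonal map $Q:V_0\to\mathcal H_{\mathbb F}$ by sending $F(x_0)\mapsto F_\Phi(x_0)$, $dF_{x_0}X\mapsto dF_{\Phi,x_0}X$ and $\eta\mapsto\Psi\eta$ on $\RR F\oplus T\Sigma\oplus E=V_0$. One then compares the two orthonormal adapted frames along curves. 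The Gauss--Weingarten equations, written as in the proof of Lemma~\ref{lem:reduction-of-codimension}, give the same linear ODE for both frames by Step~2, so $Q$ intertwines them everywhere. In particular $Q F=F_\Phi$. Because $Q$ is orthogonal and linear, fixing the origin, it extends to the claimed isomorphism.

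\textbf{Main obstacle.} The delicate point is justifying that $\Psi$ is genuinely parallel for the correct connections and that the adapted-frame ODE argument is well posed. In particular, one must check that the model's $\overline L$ is parallel, which Lemma~\ref{lem:rigidity-lifted-normal-isometry} asserts via equivariance. Granting that lemma, the remaining work is the standard uniqueness ODE argument.
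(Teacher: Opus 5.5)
Your proposal is correct and is essentially the paper's argument. The paper defines $Q_x$ at every point through the adapted splittings $\RR F\oplus dF(T\Sigma)\oplus E$ and $\RR F_\Phi\oplus dF_\Phi(T\Sigma)\oplus E_\Phi$, using $\Psi$ on the normal part; the spherical Gauss--Weingarten formulas then show that $Q_\bullet$ intertwines the two flat connections and is therefore constant, which is the same as your ODE-uniqueness propagation of $Q_{x_0}$ along curves. Your Step~1 codimension count is a harmless extra check that the paper leaves implicit in $\Psi$ being a bundle isometry onto the full normal bundle $E_\Phi$.
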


\begin{proof}
For each $x\in\Sigma$, we have the bundle decompositions
\[
\begin{aligned}
        V_0
        &=
        \mathbb R F(x)
        \oplus
        dF_x(T_x\Sigma)
        \oplus
        E_x,                                      \\
        \mathcal H_{\mathbb F}
        &=
        \mathbb R F_\Phi(x)
        \oplus
        d(F_\Phi)_x(T_x\Sigma)
        \oplus
        (E_\Phi)_x .
\end{aligned}
\]
Define $Q_x:V_0\to\mathcal H_{\mathbb F}$ by
\[
        Q_x\bigl(
        aF(x)+dF_xX+\eta
        \bigr)
        =
        aF_\Phi(x)+d(F_\Phi)_xX+\Psi_x\eta .
\]
The two immersions induce the same metric on $\Sigma$, and $\Psi_x$ is an
isometry.  Hence $Q_x$ maps the three mutually orthogonal summands for $F$
isometrically onto the corresponding summands for $F_\Phi$.  Thus each $Q_x$
is an orthogonal linear isomorphism.

It remains to show that $Q_x$ is independent of $x$.  Let $D^{V_0}$ and
$D^{\mathcal H}$ be the flat connections on the trivial bundles
$\Sigma\times V_0$ and $\Sigma\times\mathcal H_{\mathbb F}$, and let
$Q_\bullet$ denote the bundle map with fiber map $Q_x$ at $x$.  Every local
section of $\Sigma\times V_0$ can be written uniquely as
\[
        s=aF+dF(Y)+\eta,
        \qquad
        \eta\in\Gamma(E).
\]
The spherical Gauss--Weingarten formulas give
\[
\begin{aligned}
        D^{V_0}_Xs
        ={}&
        \bigl(X(a)-\langle X,Y\rangle\bigr)F       \\
        &+
        dF\bigl(
        aX+\nabla^\Sigma_XY-A^{S,F}_\eta X
        \bigr)                                                \\
        &+
        A^{S,F}(X,Y)
        +
        \nabla_X^{\perp,S,F}\eta .
\end{aligned}
\]
Using the same formula for $F_\Phi$, together with the parallelism of $\Psi$
and \eqref{eq:rigidity-second-fundamental-form-matched}, we obtain
\[
\begin{aligned}
        D^{\mathcal H}_X(Q_\bullet s)
        ={}&
        \bigl(X(a)-\langle X,Y\rangle\bigr)F_\Phi       \\
        &+
        dF_\Phi\bigl(
        aX+\nabla^\Sigma_XY-A^{S,F}_\eta X
        \bigr)                                                    \\
        &+
        \Psi\bigl(
        A^{S,F}(X,Y)
        +
        \nabla_X^{\perp,S,F}\eta
        \bigr)                                                    \\
        ={}&
        Q_\bullet(D^{V_0}_Xs).
\end{aligned}
\]
Thus $Q_\bullet$ intertwines the two flat connections.  If $v\in V_0$ is
regarded as a constant section of $\Sigma\times V_0$, then
\[
        D^{\mathcal H}_X(Q_\bullet v)
        =
        Q_\bullet(D^{V_0}_Xv)
        =
        0.
\]
Hence $x\mapsto Q_xv$ is constant.  Since $\Sigma$ is connected and this
holds for every $v\in V_0$, all maps $Q_x$ are equal.  Denote their common
value by $Q$.  Then $Q$ is an orthogonal linear isomorphism, and by
construction
\[
        Q\circ F=F_\Phi=\Phi_{\mathbb F}\circ\varphi .
\]
This is exactly \eqref{eq:rigidity-global-congruence}.
\end{proof}

\begin{proof}[Completion of proof of Theorem \ref{thm:main-ball-FPn}]
By Proposition~\ref{prop:rigidity-ambient-congruence}, there exists an
orthogonal linear isomorphism $Q:V_0\longrightarrow\mathcal H_{\mathbb F}$ such that
\[
        Q\circ F
        =
        \Phi_{\mathbb F}\circ\varphi .
\]
Therefore $F$ is globally congruent to the first standard projective
embedding, up to the totally geodesic inclusion
$S(V_0)\subset S^{m+\ell-1}(1)$.  This completes the proof of
Theorem~\ref{thm:main-ball-FPn}.
\end{proof}
\bigskip

%============================================================

\end{document}